\newtheorem{theorem}{Theorem}[section]
\theoremstyle{definition}
\theoremstyle{remark}
\newtheorem{remark}[theorem]{Remark}
\numberwithin{equation}{section}
\begin{document}
\theoremstyle{plain}
\newtheorem{thm}{Theorem}
\newtheorem{lem}[thm]{Lemma}
\newtheorem{cor}[thm]{Corollary}
\newtheorem{prop}[thm]{Proposition}
\newtheorem{defn}[thm]{Definition}
\newtheorem{ex}[thm]{Example}
\newtheorem{conj}[thm]{Conjecture}
\numberwithin{equation}{section}
\numberwithin{thm}{section}
\newcommand{\eq}[2]{\begin{equation}\label{#1}#2 \end{equation}}
\newcommand{\ml}[2]{\begin{multline}\label{#1}#2 \end{multline}}
\newcommand{\ga}[2]{\begin{gather}\label{#1}#2 \end{gather}}
\newcommand{\mc}{\mathcal}
\newcommand{\mb}{\mathbb}
\newcommand{\surj}{\twoheadrightarrow}
\newcommand{\inj}{\hookrightarrow}
\newcommand{\red}{{\rm red}}
\newcommand{\codim}{{\rm codim}}
\newcommand{\rank}{{\rm rank}}
\newcommand{\Pic}{{\rm Pic}}
\newcommand{\Div}{{\rm Div}}
\newcommand{\Hom}{{\rm Hom}}
\newcommand{\im}{{\rm im}}
\newcommand{\Spec}{{\rm Spec \,}}
\newcommand{\Sing}{{\rm Sing}}
\newcommand{\Char}{{\rm char}}
\newcommand{\Tr}{{\rm Tr}}
\newcommand{\Gal}{{\rm Gal}}
\newcommand{\Min}{{\rm Min \ }}
\newcommand{\Max}{{\rm Max \ }}
\newcommand{\ti}{\times }
\newcommand{\sA}{{\mathcal A}}
\newcommand{\sB}{{\mathcal B}}
\newcommand{\sC}{{\mathcal C}}
\newcommand{\sD}{{\mathcal D}}
\newcommand{\sE}{{\mathcal E}}
\newcommand{\sF}{{\mathcal F}}
\newcommand{\sG}{{\mathcal G}}
\newcommand{\sH}{{\mathcal H}}
\newcommand{\sI}{{\mathcal I}}
\newcommand{\sJ}{{\mathcal J}}
\newcommand{\sK}{{\mathcal K}}
\newcommand{\sL}{{\mathcal L}}
\newcommand{\sM}{{\mathcal M}}
\newcommand{\sN}{{\mathcal N}}
\newcommand{\sO}{{\mathcal O}}
\newcommand{\sP}{{\mathcal P}}
\newcommand{\sQ}{{\mathcal Q}}
\newcommand{\sR}{{\mathcal R}}
\newcommand{\sS}{{\mathcal S}}
\newcommand{\sT}{{\mathcal T}}
\newcommand{\sU}{{\mathcal U}}
\newcommand{\sV}{{\mathcal V}}
\newcommand{\sW}{{\mathcal W}}
\newcommand{\sX}{{\mathcal X}}
\newcommand{\sY}{{\mathcal Y}}
\newcommand{\sZ}{{\mathcal Z}}
\newcommand{\A}{{\Bbb A}}
\newcommand{\B}{{\Bbb B}}
\newcommand{\C}{{\Bbb C}}
\newcommand{\D}{{\Bbb D}}
\newcommand{\E}{{\Bbb E}}
\newcommand{\F}{{\Bbb F}}
\newcommand{\G}{{\Bbb G}}
\renewcommand{\H}{{\Bbb H}}
\newcommand{\I}{{\Bbb I}}
\newcommand{\J}{{\Bbb J}}
\newcommand{\M}{{\Bbb M}}
\newcommand{\N}{{\Bbb N}}
\renewcommand{\P}{{\Bbb P}}
\newcommand{\Q}{{\Bbb Q}}
\newcommand{\R}{{\Bbb R}}
\newcommand{\T}{{\Bbb T}}
\newcommand{\U}{{\Bbb U}}
\newcommand{\V}{{\Bbb V}}
\newcommand{\W}{{\Bbb W}}
\newcommand{\X}{{\Bbb X}}
\newcommand{\Y}{{\Bbb Y}}
\newcommand{\Z}{{\Bbb Z}}
\newcommand{\pic}{{\text{Pic}(C,\sD)[E,\nabla]}}
\newcommand{\ocd}{{\Omega^1_C\{\sD\}}}
\newcommand{\oc}{{\Omega^1_C}}
\newcommand{\al}{{\alpha}}
\newcommand{\be}{{\beta}}
\newcommand{\ta}{{\theta}}
\newcommand{\ve}{{\varepsilon}}
\newcommand{\lr}[2]{\langle #1,#2 \rangle}
\newcommand{\nnn}{\newline\newline\noindent}
\newcommand{\nn}{\newline\noindent}
\newcommand{\snote}[1]{{\bf #1}}
\title{Feynman Amplitudes in Mathematics and Physics}


\author{Spencer Bloch}
\address{5765 S Blackstone Ave., Chicago, IL, 60637}
\email{spencer\_bloch@yahoo.com}

\subjclass[2000]{Primary }

\date{August 23, 2015}

\begin{abstract}These are notes of lectures given at the CMI conference in August, 2014 at ICMAT in Madrid. The focus is on some mathematical questions associated to Feynman amplitudes, including Hodge structures, relations with string theory, and monodromy (Cutkosky rules). 
\end{abstract}

\maketitle

\tableofcontents
\newpage
\section{Introduction}
What follows are notes from some lectures I gave at a Clay Math conference at CMAT in Madrid in August, 2014. I am endebted to the organizers for inviting me, and most particularly to Jos\'e Burgos Gil for giving me his notes. 

The subject matter here is (a slice of) modern physics as viewed by a mathematician. To understand what this means, I told the audience about my grandson who, when he was 4 years old, was very much into trains. (He is 5 now, and more into dinosaurs.) For Christmas, I bought him an elaborate train set. At first, the little boy was at a complete loss. The train set was much too complicated. Amazingly, everything worked out! Even though the train itself was hopelessly technical and difficult, the box the train came in was fantastic; with wonderful, imaginative pictures of engines and freight cars.   Christmas passed happily in fantasy play inspired by the images of the trains on the box.  Modern physics is much too complicated for anyone but a ``trained'' and dedicated physicist to follow. However, ``the box it comes in'', the superstructure of mathematical metaphor and analogy which surrounds it, can be a delightful inspiration for mathematical fantasy play. 

The particular focus of these notes is the Feynman amplitude. Essentially, to a physical theory the physicist associates a lagrangian, and to the lagrangian a collection of graphs, and to each graph a function called the amplitude on a space of external momenta. Section \ref{configsect} develops the basic algebra of the first and second Symanzik polynomials in the context of configurations, i.e. invariants associated to a finite dimensional based vector space $\Q^E$ with a given subspace $H\subset \Q^E$. Section \ref{graph1} considers the first Symanzik (aka Kirchoff polynomial) $\psi_G$ in the case $E=\text{edges}(G)$ and $H=H_1(G)$ for a graph $G$. $\psi_G= \det(M_G)$ is a determinant, so the {\it graph hypersurface} $X_G:\psi_G=0$ admits a birational cover $\pi: \Lambda_G \to X_G$ with fibre over $x\in X_G$ the projective space associated to $\ker M_G(x)$.  The main result in this section is theorem \ref{patterson}. A classical theorem of Riemann considers singularities of the theta divisor $\Theta\subset J^{g-1}(C)$ where $C$ is a Riemann surface of genus $g$. Here $J^{g-1}(C)$ is the space of rational equivalence classes of divisors of degree $g-1$ on $C$ and $\Theta \subset J^{g-1}$ is the subspace of effective divisors. One shows that $\Theta$ is (locally) defined by the vanishing of a determinant, and the evident map $\text{Sym}^{g-1}(C) \to \Theta$ has fibres the projectivized kernels. Riemann's result is that the dimension of the fibre over $x\in \Theta$ is one less than the multiplicity of $x$ on $\Theta$. Theorem \ref{patterson} is the analogous result for $X_G$, viz. $\dim(\pi^{-1}(x)) = \text{Mult}_x(X_G)-1$. 

Section \ref{lambda} considers in more detail the Hodge structure of $\Lambda_G$. It turns out that whereas the Hodge structure for $X_G$ is subtle and complicated, the Hodge structure on $\Lambda_G$ is fairly simple. In particular, $\Lambda_G$ is mixed Tate. 

Sections \ref{2symanzik}-\ref{limit} focus on the amplitude $A_G$, viewed as a multi-valued function of external momenta. Various standard formulas for $A_G$ are given in section \ref{2symanzik}.  Sections \ref{riemannsurfaces}-\ref{limit} develop from a Hodge-theoretic perspective a result relating $A_G$ to a limit in string theory when the string tension $\alpha' \to 0$. Associated to $G$ is a family of stable rational curves which are unions of Riemann spheres with dual graph $G$.  We view this family as lying on the boundary of a moduli space of pointed curves of genus $g=h_1(G)$. We show how external momenta are associated to limits at the boundary of marked points, and we explicit the second Symanzik as a limit of heights. The amplitude $A_G$ then becomes an integral over the space of nilpotent orbits associated to the degeneration of the Hodge structure given by $H^1_{\text{Betti}}$ of the curves. This is joint work with Jos\'e Burgos Gil, Omid Amini, and Javier Fresan.  We are greatly endebted to P. Vanhove and P. Tourkine for their insights, (\cite{T} and references cited there.) I met my co-authors at the conference, and this work grew out of an attempt to make sense of vague and imprecise suggestions I had made in the lectures. 

The last topic (sections \ref{cutrules} - \ref{cutthm}) concerns joint work with Dirk Kreimer on Cutkosky Rules. The amplitude $A_G$ is a multi-valued function of external momenta. Cutkosky rules give a formula for the {\it variation} as external momenta winds around the {\it threshold divisor} which is the discriminant locus in the space of external momenta for the quadric propagators. Over the years there has been some uncertainty in the physics community as to the precise conditions for Cutkosky rules to apply. Using a modified notion of vanishing cycle due to F. Pham, we prove that Cutkosky rules do apply in the case of what are called {\it physical singularities}. 

Perhaps a word about what is {\it not} in these notes. In recent years, there has been enormous progress in calculating Feynman amplitudes. The work I should have liked to talk about is due to Francis Brown and Oliver Schnetz \cite{BS} which exhibits an algorithm for calculating many Feynman amplitudes and gives a beautiful example where the algorithm fails. In this case the amplitude is the period of a modular form. The whole subject of polylogarithms, which is intimately linked to amplitudes, is not addressed at all. I apologize; there simply wasn't time. 

Finally, a shout-out to the organizers, most particularly Kurusch Ebrahimi-Fard who worked very very hard on both the practical and scientific aspects of the conference. 

\section{Configuration Polynomials}\label{configsect}
Let $H$ be a finite dimensional vector space of dimension $g$ over a field $k$, and suppose we are given a finite set $E$ and an embedding $\iota: H \inj k^E$, where $k^E:= \{\sum_{e\in E} \kappa_ee\ |\ \kappa_e \in k\}$. Write $W=k^E/H$. Let $e^\vee: k^E \to k$ be the evident functional that simply takes the $e$-th coordinate of a vector, and write $e^\vee$ as well for the composition $H \inj k^E \to k$. The function $h\mapsto (e^\vee(h))^2$ defines a rank $1$ quadratic form $e^{\vee,2}$ on $H$. If we fix a basis of $H$, we can identify the quadratic form with a rank $1$ $g\times g$ symmetric matrix $M_e$ in such a way that, thinking of elements of $H$ as column vectors, we have
\eq{1}{e^{\vee,2}(h) = h^tM_eh.
}

Let $A_e,\ e\in E$ be variables, and consider $M:=\sum_{e\in E}A_eM_e$. We can interpret $M$ as a $g\times g$ symmetric matrix with entries that are linear forms in the $A_e$. More canonically, $M: H \to H^\vee$ is independent of the choice of basis of $H$. 
\begin{defn}The first Symanzik polynomial $\psi(H,\{A_e\})$ associated to $H\inj k^E$ is the determinant
\eq{2}{\psi(H,\{A_e\}) := \det(M).
}
\end{defn}
\begin{remark}A different choice of basis multiplies $\psi(H,\{A_e\})$ by an element in $k^{\times,2}$. Indeed, $M$ is replaced by $N^tMN$ where $N$ is a $g\times g$ invertible matrix with entries in $k$. 
\end{remark}

For $w\in W$, define $H_w$ to be the inverse image in $k^E$ of $w\in W=k^E/H$. We have $H\subset H_w \subset k^E$, and we can calculate the first Symanzik polynomial of $\psi(H_w,\{A_e\})$. 
\begin{defn}\label{second_symanzik} The second Symanzik polynomial
\eq{}{\phi(H,w,\{A_e\}):= \psi(H_w, \{A_e\}).
}
\end{defn}
\begin{lem}(i) The first Symanzik $\psi(H,\{A_e\})$ is homogeneous of degree $g=\dim H$ in the $A_e$. \nn
(ii) The second Symanzik $\phi(H,w,\{A_e\})$ is homogeneous of degree $g+1$ in the $A_e$ and is quadratic in $w$. 
\end{lem}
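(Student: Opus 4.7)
For part (i), the plan is a one-line observation: by construction $M = \sum_{e \in E} A_e M_e$ is a $g \times g$ matrix (in any chosen basis of $H$) whose entries are linear forms in the $A_e$. Its determinant is then visibly a homogeneous polynomial of degree $g$ in these variables.

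For part (ii), my plan is to compute directly in a basis adapted to the flag $H \subset H_w$. Choose a basis $h_1, \ldots, h_g$ of $H$ and a lift $\tilde w \in k^E$ of $w$; together they form a basis of $H_w$ (of dimension $g+1$). In this basis the Gram matrix of $\sum_e A_e e^{\vee,2}$ restricted to $H_w$ takes the block form
\[
\tilde M \;=\; \begin{pmatrix} M & v(w) \\ v(w)^t & q(w) \end{pmatrix},
\]
where $v(w)$ has $i$-th entry $\sum_e A_e e^\vee(h_i)\, e^\vee(\tilde w)$ and $q(w) = \sum_e A_e\, e^\vee(\tilde w)^2$. Crucially, each entry of $v(w)$ is linear in $\{A_e\}$ and linear in $\tilde w$, whereas $q(w)$ is linear in $\{A_e\}$ and quadratic in $\tilde w$.

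Expanding by Laplace along the last row and column, or equivalently using the Schur-complement formula (working generically so $M$ is invertible, then concluding by a polynomial identity), yields
\[
\phi(H, w, \{A_e\}) \;=\; q(w)\,\det M \;-\; v(w)^t\, \text{adj}(M)\, v(w).
\]
Each summand is homogeneous of degree $g+1$ in $\{A_e\}$ (degree $g$ from $\det M$ plus $1$ from $q$; degree $g-1$ from $\text{adj}(M)$ plus $1$ from each factor of $v$) and each is quadratic in $w$ (through $q$ and through $v^t(\cdot)v$), yielding both assertions of (ii).

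The only point to verify is independence of the choice of lift $\tilde w$: replacing $\tilde w$ by $\tilde w + h$ with $h \in H$ corresponds to adding a $k$-linear combination of the first $g$ rows and columns of $\tilde M$ to the last row and column, which preserves the determinant. Hence $\phi$ descends to a function of $w \in W$ (up to the $(k^\times)^2$ ambiguity already inherent in the definition of $\psi$), and the bilinear quantities $v(w)^t \text{adj}(M) v(w)$ and $q(w) \det M$ are genuinely quadratic functionals of $w$. Everything else is a routine degree count, so this is the only delicate step.
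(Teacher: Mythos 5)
Your proof is correct and follows essentially the same route as the paper: express $\phi$ as the determinant of the $(g+1)\times(g+1)$ Gram block matrix on $H_w$ and expand along the last row/column (Schur complement), reading off the degree in $\{A_e\}$ and the quadratic dependence on the lift. You are a bit more explicit than the paper in two respects, both correct and worth keeping in mind: you record the Schur-complement identity $\det\tilde M = q\,\det M - v^t\,\mathrm{adj}(M)\,v$ (which the paper only states later, in Remark~\ref{det_formula}), and you verify that the answer is independent of the choice of lift $\tilde w$, which the paper leaves implicit.
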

\begin{proof} The assertions about homogeneity in $A_e$ are clear. To check that $\phi$ is quadratic in $w$, it suffices to note that 
\eq{4}{\phi(H,w,\{A_e\}) = \det\Big(\sum A_e\begin{pmatrix}M_e & W_e \\
{}^tW_e & Q(W_e)\end{pmatrix}\Big)
} 
is quadratic in $W_e$, where $W_e=(w_{e,1},\dotsc,w_{e,g})$ is a column vector and $Q(W_e)=\sum_1^n w_{e,i}^2$. This is straightforward, expanding the determinant by the last row for example. 
\end{proof}
\begin{remark}In fact, one can do a bit better. Let $V$ be a vector space and let $q$ be a quadratic form on $V$. We can take the $W_e=(v_{e,1},\dotsc,v_{e,g})$ to have entries in $V$ and define $Q(W_e):= \sum_1^gq(v_{e,i})$. Using the quadratic form $q$, one can make sense of the determinant expression on the right in \eqref{4} and define $\phi(H,w,\{A_e\})$ for $w\in W\otimes V$. Typically, in physics $V=\R^D$ is space-time and $q$ is the Minkowski metric. 
\end{remark}
\begin{remark}\label{det_formula} To relate the second Symanzik to the height (see sections \ref{2symanzik}-\ref{limit}), it will be convenient to rewrite the above determinant expression as a bilinear form. Suppose we are given a symmetric $(g+1)\times (g+1)$ matrix of the form 
\eq{}{A:= \begin{pmatrix} M & W\\
W^t &  S
\end{pmatrix}
}
where $M$ is $g\times g$ and invertible, $W$ is a column vector of length $g$, and $S$ is a scalar. Recall the classical formula $(\det M)M^{-1} = \text{adj}(M)$ where $\text{adj}(M)$ is the matrix of minors.  Then 
\eq{}{\det A = -W^{t}\text{adj}(M)W+S\det M.
}
In the context of the previous remark, if $W$ has entries in a vector space $V$ with a quadratic form $Q$, the determinant in \eqref{4} can be written
\eq{}{\phi(H,w,\{A_e\})/\psi(H,\{A_e\}) = -W^{t}M^{-1}W+Q(W^t\cdot W)
}
where the product $W^{t}M^{-1}W$ is interpreted via the quadratic form. 
\end{remark}
\section{The First Symanzik Polynomial for Graphs}\label{graph1}
Of primary importance for us will be configurations associated to graphs. 
\begin{defn} A graph $G$ is determined by sets $E=E(G)$ (edges) and $V=V(G)$ (vertices) together with a set $\Lambda$ which we can think of as the set of {\rm half edges} of $G$. We are given a diagram of projections
\eq{}{E \xleftarrow{p}\Lambda \xrightarrow{q} V.
}
We assume $p$ is surjective and $p^{-1}(e)$ has $2$ elements for all $e\in E$.  
\end{defn} 
(This definition may seem a bit fussy, but half edges are very useful when one wants to talk about automorphisms of a graph. For example, there is a unique graph $G$ with one edge and one vertex, and $\text{Aut}(G)=\Z/2\Z$.) An {\it orientation} of $G$ is an ordering of $p^{-1}(e)$ for all edges $e$, i.e. a map $\Lambda \to E\times \{-1,1\}$ satisfying obvious conditions. For an oriented graph $G$ there is a boundary map $\partial: \Z^E \to \Z^V,\ e \mapsto \partial^+(e)-\partial^-(e)$. We define the homology of $G$ via the exact sequence
\eq{6}{0 \to H_1(G,\Z) \to \Z^E \xrightarrow{\partial} \Z^V \to H_0(G,\Z) \to 0. 
}
The homology of $G$ with coefficients in an abelian group $A$ is defined similarly. 
\begin{defn}The graph polynomial $\psi_G(\{A_e\}_{e\in E(G)})$ (also sometimes called the Kirchoff polynomial or the first Symanzik polynomial of $G$) for $G$ is the first Symanzik polynomial for the configuration $H_1(G,\Z)\subset \Z^E$. 
\end{defn}

Let $G$ be a graph and let $e\in G$ be an edge. The graph $G-e$ (resp. $G/e$) is obtained by cutting (resp. contracting) the edge $e$ in $G$. The general notion of cutting or contracting in a configuration is explained by the diagram 
\eq{7}{\begin{CD}0 @>>> H(\text{cut }e) @>>> k^{E-\{e\}} \\
@. @VV\text{inject}V @VV\text{inject} V \\
0 @>>> H @>>> k^E \\
@. @VVV @VV\text{surject}V \\
0 @>>> H(\text{shrink }e) @>>> k^{E-e}
\end{CD}
}
\begin{prop}\label{prop1} (i) $\psi(H(\text{cut }e)) = \frac{\partial}{\partial A_e}\psi(H)$. \nn
(ii) $\psi(H(\text{shrink }e) = \psi(H)|_{A_e=0}$ unless $e$ is a tadpole (i.e. the edge $e$ has only one vertex). If $e$ is a tadpole, $\psi(H)|_{A_e=0} = 0$. \nn
(iii) $\psi(H)$ has degree $\le 1$ in every $A_e$. 
\end{prop}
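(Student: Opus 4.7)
The whole proof rests on the rank-one structure of each $M_e$. Write $M_e = v_e v_e^t$, where $v_e$ is the column vector representing the linear functional $e^\vee|_H$ in some chosen basis of $H$, and set $\tilde M := \sum_{e' \ne e} A_{e'} M_{e'}$, so that $M = \tilde M + A_e\, v_e v_e^t$. The plan is to apply the matrix determinant lemma to this rank-one update, giving
\[
\det M \;=\; \det \tilde M \;+\; A_e \cdot v_e^{\,t}\, \text{adj}(\tilde M)\, v_e.
\]
The right-hand side is visibly affine in $A_e$, establishing (iii) at once, and differentiating yields $\partial \psi(H)/\partial A_e = v_e^{\,t}\, \text{adj}(\tilde M)\, v_e$, so (i) reduces to identifying this coefficient with $\psi(H(\text{cut }e))$.

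For that identification, the natural move is to choose a basis $h_1,\dots,h_g$ of $H$ adapted to $e^\vee|_H$: take $h_1,\dots,h_{g-1}$ to span $H(\text{cut }e) = \ker(e^\vee|_H)$ and pick $h_g$ with $e^\vee(h_g) = 1$ (the case $e^\vee|_H = 0$ is handled formally, since then $v_e = 0$). In this basis $v_e$ is the last standard basis vector, so $v_e^{\,t}\, \text{adj}(\tilde M)\, v_e = (\text{adj}(\tilde M))_{gg}$, the determinant of the upper-left $(g-1)\times(g-1)$ block of $\tilde M$. That block is exactly the defining matrix of the configuration $H(\text{cut }e) \hookrightarrow k^{E - \{e\}}$: restricting the rank-one form $M_{e'}$ (for $e' \neq e$) to the subspace spanned by $h_1,\dots,h_{g-1}$ simply deletes the last row and column, so the restricted matrix is the $(g-1)\times(g-1)$ upper-left block of $M_{e'}$. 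Hence the cofactor equals $\psi(H(\text{cut }e))$, finishing (i).

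For (ii), observe that $H(\text{shrink }e)$ is by construction the image of $H$ in $k^{E - \{e\}}$, i.e.\ $H / (H \cap k \cdot e)$; a case split on whether $e \in H$ is natural. If $e \notin H$ the projection $H \to H(\text{shrink }e)$ is an isomorphism, and for $e' \ne e$ the functional $(e')^\vee$ on $H(\text{shrink }e)$ pulls back to $(e')^\vee|_H$; hence the defining matrix of $H(\text{shrink }e)$ coincides with $\tilde M$, giving $\psi(H(\text{shrink }e)) = \det \tilde M = \psi(H)|_{A_e=0}$. If $e \in H$ (the tadpole case), choose a basis with $h_1 = e$; the observation $(e')^\vee(e) = 0$ for every $e' \neq e$ forces $M_{e'}$ to have vanishing first row and column, so $\tilde M$ does too and $\det \tilde M = 0$, yielding the tadpole clause.

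The one genuinely substantive step is the minor identification in (i): the coincidence of $v_e^{\,t}\, \text{adj}(\tilde M)\, v_e$ with the Symanzik polynomial of the cut configuration is transparent only after the adapted basis is in place. Everything else—the linearity in $A_e$, the non-tadpole computation, and the vanishing in the tadpole case—is formal once the rank-one decomposition $M_e = v_e v_e^{\,t}$ is exploited.
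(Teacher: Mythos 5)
Your proof is correct and fills in details the paper leaves to the reader (the paper says only ``(i) and (ii) are straightforward from \eqref{7}'' and sketches (iii)). The core device --- a basis of $H$ adapted to $e^\vee$, so that $v_e$ becomes a standard basis vector and $M_e$ has a single nonzero diagonal entry --- is the same one the paper uses for (iii); you have simply packaged it through the rank-one determinant identity
\[
\det\bigl(\tilde M + A_e\,v_e v_e^t\bigr) \;=\; \det\tilde M \;+\; A_e\, v_e^t\,\text{adj}(\tilde M)\,v_e,
\]
which gives all three parts from one formula: linearity in $A_e$ is (iii), the constant term $\det\tilde M = \psi(H)|_{A_e=0}$ feeds (ii), and the $A_e$-coefficient, identified as a cofactor in the adapted basis, gives (i). This unification is tighter than the paper's piecemeal account, which effectively does the same cofactor expansion but only for (iii).

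One caution about the parenthetical ``(the case $e^\vee|_H = 0$ is handled formally, since then $v_e = 0$)'': that case is not actually handled, and you should not suggest it is. When $e^\vee|_H = 0$ the subspace $H(\text{cut}\,e) = H\cap\ker e^\vee$ equals all of $H$ and has dimension $g$, not $g-1$, so $\psi(H(\text{cut}\,e)) = \det\tilde M$ is generically a nonzero polynomial of degree $g$, whereas $\partial\psi(H)/\partial A_e = 0$; identity (i) as literally stated fails. For graphs this is precisely the case of a bridge edge: no cycle passes through $e$, so $\psi_G$ is independent of $A_e$, yet $\psi_{G-e}$ is a nonzero product of the component polynomials. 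The paper does not flag this either, so it is a shared oversight in the statement of (i), but ``handled formally'' is the wrong thing to write; the honest thing is to say that (i) is proved under the tacit hypothesis $e^\vee|_H \neq 0$. Your arguments for (ii) and (iii) are unaffected, since (iii) is trivially true when $A_e$ does not appear, and the bridge case ($e\notin H$, $e^\vee|_H=0$) falls under the non-tadpole branch of (ii), where your identification $H(\text{shrink}\,e)\cong H$ with matrix $\tilde M$ still goes through.
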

\begin{proof}(i) and (ii) are straightforward from \eqref{7}. For (iii), note $\psi$ depends only upto scale on the choice of basis of $H$. We choose a basis $h_1,\dotsc,h_g$ such that $e^\vee(h_i)=0$ for $2\le i\le g$. The matrix $M_e$ is then $g\times g$ diagonal with $g-1$ zeroes and a single $1$ in position $(1,1)$. The matrix $\sum_{\ve \in E} A_\ve M_\ve$ then involves $A_e$ only in position $(1,1)$, and (iii) follows. 
\end{proof}
In the case of the graph polynomial of a graph $G$, one can be more precise. A {\it spanning tree} $T\subset G$ is a connected subgraph of $G$ with $V(T) = V(G)$ and $H_1(T,\Z)=(0)$. 
\begin{prop}Let $G$ be a graph. Then the graph polynomial can be written
\eq{}{\psi_G = \sum_{T\subset G}\prod_{e\not\in T}A_e.
}
(Here $T$ runs through all spanning trees of $G$. 
\end{prop}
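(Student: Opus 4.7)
The strategy is induction on $|E(G)|$ via deletion--contraction, showing that both $\psi_G$ and $\tau_G := \sum_T \prod_{e\notin T}A_e$ obey the same recursion.

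First, I would assemble the parts of Proposition~\ref{prop1}, specialized to $H=H_1(G,\Z)$. Part (iii) gives $\psi_G = A_eP+Q$ with $P,Q$ independent of $A_e$. Part (i) identifies $P=\partial\psi_G/\partial A_e$ with $\psi_{G-e}$, since cutting in the sense of diagram~(3.7) replaces $H_1(G)$ by $H_1(G-e)=H_1(G)\cap k^{E-\{e\}}$. Part (ii) identifies $Q=\psi_G|_{A_e=0}$ with $\psi_{G/e}$ when $e$ is not a tadpole, and with $0$ when $e$ is a tadpole. Together this yields the deletion--contraction recursions
\[
\psi_G = A_e\,\psi_{G-e}+\psi_{G/e}\quad(e\text{ not a loop}),\qquad \psi_G = A_e\,\psi_{G-e}\quad(e\text{ a loop}).
\]
For a bridge one has $e^{\vee}|_{H_1(G)}=0$, hence $M_e=0$; then $\psi_G$ is independent of $A_e$, and directly $\psi_G=\psi_{G/e}$ since contracting a bridge preserves $H_1$ and leaves the remaining $M_{e'}$ unchanged.

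Second, I would verify that $\tau_G$ satisfies the matching recursion by partitioning spanning trees of $G$ according to whether they contain the fixed edge $e$. If $e$ is neither a loop nor a bridge, spanning trees containing $e$ biject with spanning trees of $G/e$ via $T\mapsto T/e$, contributing $\tau_{G/e}$, and those avoiding $e$ biject with spanning trees of $G-e$ (which remains connected), contributing $A_e\,\tau_{G-e}$. A loop lies in no spanning tree, yielding $\tau_G = A_e\,\tau_{G-e}$; a bridge lies in every spanning tree, yielding $\tau_G = \tau_{G/e}$, independent of $A_e$. The base case is the single vertex with no edges, where $\psi_G = \det(\text{empty matrix}) = 1 = \tau_G$. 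Induction on $|E(G)|$ then closes the argument, since $G-e$ and $G/e$ each have one fewer edge.

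The main technical point is the bridge case, where $G-e$ is disconnected and Proposition~\ref{prop1}(i) must be interpreted carefully (the cut configuration equals $H_1(G)$ itself, while the formal derivative $\partial\psi_G/\partial A_e$ vanishes). I would bypass this by invoking only the contraction step together with the vanishing of $M_e$, so that the recursion collapses to $\psi_G=\psi_{G/e}$ and $\tau_G=\tau_{G/e}$ on both sides. As an alternative route, one could prove the formula in one stroke via Cauchy--Binet applied to $M = B\,\mathrm{diag}(A_e)\,B^t$ for any integral matrix $B$ whose rows span $H_1(G)$, using the standard fact that $\det(B_S)^2 \in \{0,1\}$ with value $1$ iff $E\setminus S$ is a spanning tree; but the deletion--contraction proof fits more directly into the framework developed in Sections~\ref{configsect}--\ref{graph1}.
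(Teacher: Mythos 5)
Your proof is correct, but it takes a genuinely different route from the paper. The paper's argument is a direct coefficient computation: by Proposition~\ref{prop1}(iii) the polynomial $\psi_G$ is multilinear of degree $g$, so it suffices to read off the coefficient of each square-free monomial $\prod_{i=1}^g A_{e_i}$; by Proposition~\ref{prop1}(ii), that coefficient equals $\psi$ of the graph obtained by shrinking the complementary edges $E\setminus\{e_1,\dots,e_g\}$, which is $1$ if the complement is a spanning tree (the result is a rose of $g$ tadpoles) and $0$ otherwise (the complement contains a cycle, hence shrinking creates a tadpole whose variable is set to $0$). You instead run a deletion--contraction induction on $|E(G)|$, showing $\psi_G$ and the spanning-tree sum obey the same three-case recursion. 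Both routes rest on Proposition~\ref{prop1}, but the paper only needs (ii) and (iii), while you also invoke (i). Your approach requires extra bookkeeping for the loop and bridge cases; the paper's absorbs these uniformly into the ``which monomials survive'' argument. On the other hand, you correctly flag the genuine subtlety with Proposition~\ref{prop1}(i) at a bridge (where $e^\vee|_{H_1(G)}=0$ so the cut configuration has full dimension and the stated identity degenerates) and route around it via $M_e=0$; the paper's proof never touches that case, which is one advantage of the direct coefficient argument. Your Cauchy--Binet aside is the classical matrix-tree proof and is also a valid alternative, though farther from the configuration-polynomial framework of Sections~\ref{configsect}--\ref{graph1}.
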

\begin{proof}We know by proposition \ref{prop1} (iii) that every monomial in $\psi_G$ is a product of $g=h_1(G)$ distinct edge variables $A_e$. For a given set $\{e_1,\dotsc,e_g\}$ of distinct edges, we know by proposition \ref{prop1} (ii) that this term is exactly the graph polynomial of $G$ with all the edges $\ve \not\in \{e_1,\dotsc,e_g\}$ shrunk. If the edges not in $\{e_1,\dotsc,e_g\}$ form a spanning tree for the graph, shrinking them will yield a {\it rose} with $g$ loops, i.e. a union of $g$ tadpoles. The graph polynomial of such a graph is simply $\prod_1^g A_{e_i}$ so the coefficient of this product in $\psi_G$ is $1$. On the other hand, if the edges $A_\ve$ not in $\{e_1,\dotsc,e_g\}$ do not form a spanning tree, they must necessarily contain a loop and so setting them to $0$ kills $\psi_G$ and there is no term in $A_{e_1},\dotsc,A_{e_g}$. 
\end{proof}

We continue to assume $G$ is a graph. We write $n=\# E(G)$ and $g=\dim H_1(G)$. The hypersurface $X_G:\psi_G=0$ in $\P^{n-1}$ is the {\it graph hypersurface}. The symmetric matrix $M:= \sum_e A_eM_e$ defines a linear map $\bigoplus_g \sO_{\P^{n-1}} \to \bigoplus_g \sO_{\P^{n-1}}(1)$. Define
\eq{}{\Lambda_G := \{(a,\beta)\in \P^{n-1}\times \P^{g-1}\ |\ M_a(\beta) = 0\}
}
Note $\Lambda_G \subset X_G\times \P^{g-1} \subset \P^{n-1}\times \P^{g-1}$. 
\begin{prop}\label{Lambdaprop}(i) There exist coherent sheaves $\sE$ on $X_G$ and $\sF$ on $\P^{g-1}$ such that $\Lambda_G \cong \text{Proj(Sym}(\sE)) \cong \text{Proj(Sym}(\sF))$. \nn
(ii) $\Lambda_G$ is a reduced, irreducible variety of dimension $n-2$ which is a complete intersection of codimension $g$ in $\P^{n-1}\times \P^{g-1}$. The projection $\pi:\Lambda_G \to X_G$ is birational. 
\end{prop}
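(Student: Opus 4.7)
\emph{Plan for (i).} The matrix $M$ gives a map of locally free sheaves $M\colon\bigoplus_{i=1}^{g}\sO_{\P^{n-1}}(-1)\to\bigoplus_{i=1}^{g}\sO_{\P^{n-1}}$ on $\P^{n-1}$, whose cokernel $\sE$ is supported precisely on $X_G=\{\psi_G=0\}$ since $M_a$ is invertible exactly where $\psi_G(a)\ne 0$. The symmetry $M=M^t$ provides a fiberwise identification $\ker M_a\cong \mathrm{coker}(M_a)^\vee$, which globalizes to $\Lambda_G\cong \mathrm{Proj}\,\mathrm{Sym}(\sE)$ (under the convention $\mathrm{Proj}\,\mathrm{Sym}(\sE)$ parametrizes rank-one quotients of $\sE$). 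Symmetrically, reading $M_a\beta=0$ as a family of linear maps $k^n\to k^g$ parametrized by $\beta\in\P^{g-1}$ yields a map $M'\colon\bigoplus_{e\in E}\sO_{\P^{g-1}}\to\bigoplus_{i=1}^{g}\sO_{\P^{g-1}}(1)$; setting $\sF:=\mathrm{coker}(M'^{t})$ gives the second identification $\Lambda_G\cong\mathrm{Proj}\,\mathrm{Sym}(\sF)$.

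\emph{Plan for (ii), main line.} Since $\Lambda_G$ is cut out of $\P^{n-1}\times\P^{g-1}$ (dimension $n+g-2$) by the $g$ bilinear forms $(M_a\beta)_i$, Krull's Hauptidealsatz gives every irreducible component dimension at least $n-2$. To exhibit a component of exactly this dimension and establish birationality of $\pi$ simultaneously, I would project to $X_G$: using the irreducibility of $\psi_G$, at a generic $a\in X_G=\{\det M=0\}$ the symmetric matrix $M_a$ has rank exactly $g-1$, so the rational map $a\mapsto\P(\ker M_a)\in\P^{g-1}$ is defined and single-valued on a dense open subset of $X_G$. Its graph closure is an irreducible subvariety of $\Lambda_G$ of dimension $\dim X_G=n-2$, birational to $X_G$ via $\pi$.

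\emph{Plan for (ii), ruling out excess components.} To confirm that $\Lambda_G$ coincides with this graph (hence is the expected complete intersection), I would switch to $p_2\colon\Lambda_G\to\P^{g-1}$ and exploit the rank-one factorization $M_e=c_ec_e^{t}$, where $\{c_e\}_{e\in E}$ spans $H^\vee$ (by surjectivity of the dual of $\iota:H\inj k^E$). Then $M'(\beta)=C\cdot\mathrm{diag}(\ell_e(\beta))$ with $C=[c_e]$ of rank $g$ and $\ell_e(\beta)=c_e^{t}\beta$, so outside the hyperplane arrangement $\bigcup_e\{\ell_e=0\}$ the matrix $M'(\beta)$ has full row rank $g$ and the $p_2$-fiber is a $\P^{n-g-1}$. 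Thus $\Lambda_G$ carries a smooth, irreducible, $(n-2)$-dimensional open dense subset that is a $\P^{n-g-1}$-bundle over an open subset of $\P^{g-1}$. A stratum-by-stratum dimension count along the rank-drop locus (controlled by the matroid of $\{c_e\}$) bounds the contribution of each degenerate stratum to $\dim\Lambda_G$ by $n-2$. Equality of actual and expected dimensions makes the $g$ defining forms a regular sequence, so $\Lambda_G$ is a complete intersection, hence Cohen--Macaulay; being generically reduced via the projective-bundle locus, it is reduced.

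\emph{Main obstacle.} The delicate technical point is the codimension bound on the rank-drop strata of $M'(\beta)$: verifying combinatorially that whenever the span of $\{c_e:\ell_e(\beta)\ne 0\}$ drops to rank $\le r$, the corresponding locus in $\P^{g-1}$ has codimension at least $g-r$. This is not a generic-codimension statement but must hold for every graph configuration, and it is precisely the step in which the injectivity of $\iota$ and the underlying matroid of the configuration do the real work.
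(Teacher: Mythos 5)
For part (i), your construction matches the paper's up to where the sheaf lives: the paper defines $\sE$ directly as a coherent sheaf on $X_G$ by the presentation $H_1(G)\otimes\sO_{X_G}\xrightarrow{M} H^1(G)\otimes\sO_{X_G}(1)\to\sE\to 0$, whereas you take the cokernel of $M$ on all of $\P^{n-1}$; since that cokernel is supported on $X_G$, the two Proj's coincide, and the $\sF$ side is essentially identical in both.

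For part (ii), the step you flag as the ``main obstacle'' is a genuine gap in the proposal, and it is precisely the load-bearing step. Fortunately it has a two-line proof you should supply: set $W = \{e : \ell_e(\beta)=0\}$, so that $\beta\in L_W := \bigcap_{e\in W}\{\ell_e=0\}$ and $\mathrm{codim}\,L_W = \mathrm{rank\ span}\{c_e : e\in W\}$. Because $\iota$ is injective, the full collection $\{c_e\}_{e\in E}$ has rank $g$; hence if $\mathrm{rank\ span}\{c_e : e\notin W\}\le r$ then $\mathrm{rank\ span}\{c_e : e\in W\}\ge g-r$, i.e.\ $\mathrm{codim}\,L_W\ge g-r$. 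Running $W$ over the finitely many subsets of $E$ bounds each rank-drop stratum exactly as you need, after which the Hauptidealsatz plus your stratum count closes (ii). The paper's own proof is terser and does not spell this out: it observes that the invariant $\ve(\beta)$ (codimension of $\mathrm{span}\{e^\vee : e^\vee(\beta)\neq 0\}$) vanishes for generic $\beta$, concludes $\dim\Lambda_G=n-2$ from the generic fibre of the map to $\P^{g-1}$, gets surjectivity of $\pi$ from $\ker M_a\neq 0$ on $X_G$, reads off birationality from equality of dimensions, and finally writes out the $g$ bilinear equations \eqref{15} to display the complete intersection. Your graph-closure argument for birationality is a valid and slightly more explicit substitute for the paper's dimension comparison; with the codimension bound above inserted, your route is actually somewhat more complete than what is printed.
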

\begin{proof}Define $\sE$ by the presentation
\eq{}{H_1(G)\otimes \sO_{X_G} \xrightarrow{M} H^1(G)\otimes \sO_{X_G}(1) \to \sE \to 0. 
}
Here for $a\in X_G$, $M_a=\sum a_eM_e$. 

For $\sF$, the map which is given over $\beta\in \P^{g-1}$ by $a\mapsto \sum_E a_e(e^\vee(\beta))e^\vee$ dualizes to a presentation
\eq{}{\sO_{\P^{g-1}}^g \to \sO_{\P^{g-1}}^n(1) \to \sF \to 0. 
}
The fibre $\sF_\beta$ is the quotient of $k^{n,\vee}$ by the space of functionals of the form $a\mapsto \sum_e a_ee^\vee(\beta)e^\vee$. We have $\dim \sF_\beta=n-g+\ve(\beta)$ where 
$\ve(\beta)$ is the codimension in $k^{g,\vee}$ of the span of $\{e^\vee |\ e^\vee(\beta)\neq 0\}$. Since the $e^\vee$ span $H_1(G)^\vee$ it follows that for $\beta$ general we have $\ve(\beta)=0$ so $\Lambda_G=\text{Proj(Sym)}(\sF)$ has dimension n-2. Since the fibre of $\sE$ over a point $a$ is the kernel of $M_a$  it is non-zero for $a\in X_G$, whence $\Lambda_G \surj X_G$ with fibres projective spaces. Since the two varieties have the same dimension, it follows that $\Lambda_G \to X_G$ is birational. 

Finally, to realize $\Lambda_G$ as a complete intersection, let $A_e$ (resp. $B_j$) be a basis for the homogeneous coordinates on $\P^{n-1}$ (resp. $\P^{g-1}$). We can think of $B_j \in H_1(G)$. Write $w_{e,j}=e^\vee(B_j)$. Then the defining equations are
\eq{15}{0 = \sum_{j=1}^g \sum_E A_ew_{e,j}B_jw_{e,i}=0;\quad 1\le i\le g. 
}
\end{proof}

We will further investigate the motives of $X_G$ and $\Lambda_G$ in the sequel. There is an interesting analogy between $X_G$ and $\Theta$, the theta divisor of a genus $g$ Riemann surface $C$. Both are determinental varieties, and $\Lambda_G\to X_G$ corresponds to $\text{Sym}^{g-1}C \xrightarrow{\pi} \Theta$. One has that $\Theta$ is a divisor on the jacobian $J(C)$, and a classical theorem of Riemann states that the multiplicity of $\Theta$ at a point $x$ is equal to $1+\dim \pi^{-1}(x)$. We will show below that the same result holds for graph hypersurfaces $X_G$. 
\begin{ex} Consider a determinental variety defined by the determinant of a diagonal variety 
$$X:\det\begin{pmatrix} f_1&0&\hdots & 0\\ 0 & f_2 & \hdots & 0 \\ \vdots & \vdots & \hdots & \vdots \\ 0 & 0 & \hdots & f_g\end{pmatrix}=0$$
Then $X$ satisfies Riemann's theorem iff the zero sets of the $f_i$ are smooth and transverse. 
\end{ex}
\begin{thm}[E. Patterson \cite{Pat}]\label{patterson} Let $G$ be a graph, and let $\pi: \Lambda_G \to X_G$ be the birational cover of $X_G$ defined above. For $x\in X_G$ the fibre $\pi^{-1}(x)$ has dimension equal to $\text{mult}_x(X_G)-1$. 
\end{thm}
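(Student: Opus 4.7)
The plan is to exploit the explicit determinantal presentation $\psi_G = \det M$ with $M = \sum_e A_e M_e$ to directly compute and compare both sides. Given $x \in X_G$, set $r = \dim \ker M(x)$. By construction of $\Lambda_G$ in Proposition~\ref{Lambdaprop}, the fibre $\pi^{-1}(x)$ is the projective space $\P(\ker M(x))$ of dimension $r-1$, so the theorem reduces to showing $\text{mult}_x(X_G) = r$.

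To compute the local multiplicity, I would choose a basis of $H_1(G)$ whose last $r$ vectors span $\ker M(x)$. In this basis $M(x)$ takes the block form $\bigl(\begin{smallmatrix}A_0 & 0 \\ 0 & 0\end{smallmatrix}\bigr)$ with $A_0$ invertible of size $(g-r)\times(g-r)$, and in local coordinates $a_e := A_e - x_e$ the full matrix reads
\[ M = \begin{pmatrix} A_0 + N_{11} & N_{12} \\ N_{12}^t & N_{22} \end{pmatrix}, \]
where each $N_{ij}$ has entries that are linear forms in the $a_e$ vanishing at $x$. The Schur-complement identity of Remark~\ref{det_formula} gives
\[ \det M = \det(A_0 + N_{11})\cdot \det\bigl(N_{22} - N_{12}^t(A_0 + N_{11})^{-1}N_{12}\bigr), \]
in which the first factor is a unit at $x$. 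Because the correction term $N_{12}^t(A_0+N_{11})^{-1}N_{12}$ has entries of order $\ge 2$, expansion gives $\det M = (\text{unit})\cdot\bigl(\det N_{22}(a) + O(|a|^{r+1})\bigr)$. Since $N_{22}$ has entries linear in $a$, $\det N_{22}$ is either identically zero or homogeneous of degree exactly $r$, and in the latter case $\text{mult}_x(X_G) = r$ on the nose.

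The real obstacle is therefore to rule out $\det N_{22} \equiv 0$, and this is where the configuration structure enters essentially. Writing $W_e$ for the restriction of the coordinate functional $e^\vee$ to $\ker M(x)$, one has $N_{22}(a) = \sum_e a_e W_e W_e^t$, i.e.\ the matrix whose determinant is the first Symanzik polynomial of the sub-configuration $\ker M(x) \hookrightarrow k^E$ induced from $H_1(G) \hookrightarrow k^E$. My plan is to establish the more general fact that the first Symanzik of any nonzero configuration is a nonzero polynomial: since the inclusion $\ker M(x) \hookrightarrow k^E$ is still injective, the $\{W_e\}_{e \in E}$ span $(\ker M(x))^\vee$, so one can pick edges $e_1,\dotsc,e_r$ for which $W_{e_1},\dotsc,W_{e_r}$ form a basis of $k^r$. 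Specializing to $a_{e_i}=1$ and $a_e = 0$ otherwise yields $N_{22} = BB^t$ with $B$ invertible, so $\det N_{22}$ does not vanish at this value and is therefore a nonzero polynomial. Combined with the Schur-complement analysis this gives $\text{mult}_x(X_G) = r = \dim \pi^{-1}(x) + 1$, proving the theorem.
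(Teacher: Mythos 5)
Your proof is correct, and it takes a genuinely different route from the paper's. The paper stratifies $X_G$ two ways --- by fibre dimension ($X_p = \{x : \dim\pi^{-1}(x)\ge p\}$) and by multiplicity ($X(p) = \{x : \text{mult}_x(X_G)\ge p+1\}$) --- and shows the two coincide by proving a pair of inclusions: $X_p\subset X(p)$ follows because a nullspace of dimension $\ge p+1$ kills every $p$-fold partial of $\psi_G$, and the reverse inclusion follows from a lemma producing, for a nullspace $N$ of dimension $s$, a subset $\{e_1,\dotsc,e_s\}\subset E$ with $Q|\{e_1^\vee=\cdots=e_s^\vee=0\}$ nondegenerate (whence the corresponding $s$-fold partial $\psi_{H(\text{cut }e_1,\dotsc,e_s)}$ is nonzero at $x$). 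You instead compute the multiplicity exactly in one stroke: the Schur-complement factorization identifies the degree-$r$ part of $\det M(x+a)$ as $\det A_0\cdot\det N_{22}(a)$, where $N_{22}(a)=\sum_e a_e W_e W_e^t$ and $W_e = e^\vee|_{\ker M(x)}$, so the tangent cone at $x$ is itself (up to a unit) a configuration determinant and the whole question reduces to its nonvanishing. Notably, both arguments ultimately rest on the same fact --- that the restricted functionals $\{W_e\}$ span $(\ker M(x))^\vee$, because $\ker M(x)\hookrightarrow k^E$ is injective --- which in the paper produces the edges $e_1,\dotsc,e_s$ and in your proof produces a specialization $a$ with $\det N_{22}(a)\ne 0$. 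Your computation is more explicit and has the bonus of identifying the leading form of the graph polynomial at a singular point as a smaller Symanzik-type polynomial, which the paper's two-inequality argument does not make visible. One small point worth making explicit if you write this up: since $\psi_G$ is homogeneous and $x\ne 0$, the multiplicity of the projective hypersurface $X_G$ at $[x]$ agrees with the multiplicity of the affine cone at $x$, which is what your local-coordinate expansion in $a_e = A_e - x_e$ actually computes.
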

\begin{proof}Define $X_p:=\{x\in X_G\ |\ \dim(\pi^{-1}(x)\ge p\}$ and $X(p):=\{x\in X_G\ |\ \text{mult}_x(X_G)\ge p+1$. We want to show $X_p=X(p)$. 
\begin{lem}$X_p\subset X(p)$. 
\end{lem}
\begin{proof}[proof of lemma] We have 
$$x\in X_p \Leftrightarrow \dim(\ker(M_x=\sum x_eM_e: H_1(G) \to H_1(G)))\ge p+1.
$$
On the other hand, $X(p)$ is defined by the vanishing of all $p$-fold derivatives $\frac{\partial^p}{\partial A_{e_1}\cdots \partial A_{e_p}}\psi_G=\psi_{G-\{e_1,\dotsc,e_p\}}$. If we associate to $G$ the quadratic form $\sum x_ee^{\vee,2}$ on $H_1(G)$, then $X_p$ is the set of $x$ for which the null space of this form has dimension $\ge p+1$. The quadratic form associated to $G-\{e_1,\dotsc,e_p\}$ is simply $\sum x_ee^{\vee,2}|H_1(G)\cap\bigcap_{i=1}^p\{e_i^\vee=0\}$. This restricted form cannot be nondegenerate if the null space of the form on $H_1(G)$ had dimension $\ge p+1$. 
\end{proof}
\begin{lem}Let $Q$ be a quadratic form on a vector space $H$. Let $N\subset H$ be the null space of $Q$. We assume $\dim N=s>0$.  Suppose we are given an embedding $H\inj k^E$ for a finite set $E$. For $e\in E$ write $e^\vee: H \to k$ for the corresponding functional. Then there exists a subset $\{e_1,\dotsc,e_s\}\subset E$ such that $Q|\{e_1^\vee=\cdots=e_p^\vee=0\}$ is non-degenerate. 
\end{lem}
\begin{proof}[proof of lemma] It suffices to take $e_1,\dotsc,e_s$ such that $N\cap\{e_1^\vee=\cdots=e_s^\vee=0\} = (0)$. Indeed, if $L$ is any codimension $s$ subspace with $L\cap N=(0)$ we will necessarily have $Q|L$ nondegenerate. Since $H=L\oplus N$, any $\ell$ in the null space of $Q|L$ will necessarily be orthogonal to $L\oplus N = H$. 
\end{proof}
We return to the proof of the theorem. By the first lemma wie have $X_i\subset X(i)$. As a consequence of the last lemma we see that 
\eq{}{X_i-X_{i+1} \subset X(i)-X(i+1).
}
In other words, if the nullspace has dimension exactly $i+1$, then there exists some $(i+1)$-st order partial which doesn't vanish. Taking the disjoint union we get $X_0-X_j \subset X(0)-X(j)$ for any $j$. Since $X_0=X(0)$ it follows that $X(j)\subset X_j$. This completes the proof. 
\end{proof}
\section{$X_G$ and $\Lambda_G$}\label{lambda}

It turns out that the hypersurface $X_G:\psi_G=0$  in $\P^{n-1}$ is quite subtle and complicated, while the variety $\Lambda_G$ introduced above is rather simple. Recall we have a birational map $\pi: \Lambda_G \to X_G$ and the fibre $\pi^{-1}(a)$ is the projectivized kernel of $M_a=\sum a_eM_e$. 

Our arguments at this point are completely geometric but for simplicity we focus on the the case of Betti cohomology and varieties over the complex numbers. The key point is
\begin{prop}\label{mixedtate} The Hodge structure on Betti cohomology $H^*(\Lambda_G,\Q)$ is mixed Tate. 
\end{prop}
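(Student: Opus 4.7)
The plan is to use the projection $p \colon \Lambda_G \to \P^{g-1}$, rather than $\pi$, since the whole point is that the Hodge structure on $X_G$ is typically not mixed Tate. From the bilinear defining equations \eqref{15}, the fiber of $p$ over $\beta \in \P^{g-1}$ is cut out in $\P^{n-1}$ by the $g$ linear conditions
$$\sum_{e \in E} A_e\, e^\vee(\beta)\, w_{e,i} \;=\; 0, \qquad 1 \le i \le g,$$
so each fiber is a projective linear subspace of $\P^{n-1}$, whose dimension depends only on which of the functionals $e^\vee$ vanish at $\beta$.

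Accordingly, I would stratify $\P^{g-1}$ by the matroid data of $\beta$: for each $S \subseteq E$, set
$$U_S \;=\; \bigl\{\beta \in \P^{g-1} : e^\vee(\beta)=0 \iff e \in S\bigr\} \;=\; \Bigl(\bigcap_{e \in S}\{e^\vee=0\}\Bigr) \setminus \Bigl(\bigcup_{e \notin S}\{e^\vee=0\}\Bigr).$$
Thus $U_S$ is the complement of a hyperplane arrangement inside a linear subspace of $\P^{g-1}$, which is classically mixed Tate (e.g.\ by induction on the number of hyperplanes via the Gysin sequence, or by direct appeal to the Orlik--Solomon description). Over each nonempty $U_S$, the linear map $a \mapsto \sum_e a_e\, e^\vee(\beta)\, e^\vee$ has image of constant dimension $\dim \mathrm{span}\{e^\vee : e \notin S\}$, so its kernels glue into a Zariski-local vector subbundle of the trivial rank-$n$ bundle on $U_S$. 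Projectivizing makes $p^{-1}(U_S) \to U_S$ into a locally trivial projective bundle, whose cohomology is mixed Tate by the projective bundle formula.

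Finally, $\Lambda_G = \bigsqcup_S p^{-1}(U_S)$ is a finite stratification into locally closed mixed Tate pieces, and an induction on the number of strata using the long exact sequences of open/closed pairs yields that $H^*(\Lambda_G,\Q)$ is mixed Tate. The only non-formal ingredient is the mixed Tate property of hyperplane arrangement complements; everything else is bookkeeping for the matroid stratification of $\P^{g-1}$ determined by the family of linear forms $\{e^\vee\}_{e \in E}$.
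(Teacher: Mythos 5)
Your proof is correct and follows essentially the same strategy as the paper: both project via $p\colon \Lambda_G \to \P^{g-1}$, stratify $\P^{g-1}$ by the vanishing pattern of the $e^\vee$ so that each stratum becomes a Zariski-locally trivial projective bundle over a linear object, and then invoke the projective bundle formula together with the mixed Tate property of linear configurations and a compactly-supported spectral sequence for the stratification. The only (minor) difference is that the paper works with the coarser stratification $S^m = T^m - T^{m+1}$ where $T^m=\{\varepsilon(\beta)\ge m\}$, shows the \emph{closed} sets $T^m$ are finite unions of linear subspaces (hence mixed Tate) and passes to $H^*_c(S^m)$ via the long exact sequence, whereas you refine to the matroid strata $U_S$ and appeal directly to the Orlik--Solomon description of the hyperplane arrangement complement.
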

Recall Betti cohomology of a variety over $\C$ carries a Hodge structure. 
\begin{defn}A Hodge structure on a finite dimensional $\Q$-vector space $H$ is a pair of filtrations $(W_*, F^*)$ with $W_*H_\Q$ a finite increasing filtration (separated and exhaustive) and $F^* = F^*H_\C$ a finite (separated and exhaustive) decreasing filtration. The filtration induced by $F$ on $gr_p^WH_\C$ should be $p$-opposite to its complex conjugate, meaning that
\eq{}{gr_p^WH_\C = \bigoplus F^qgr_p^WH_\C\cap \overline F^{p-q}gr_p^WH_\C 
}
\end{defn}
A Hodge structure is {\it pure} if its weight filtration has a single non-trivial weight. 
\begin{ex}The Tate Hodge structures $\Q(n)$ are one dimensional $\Q$-vector spaces with weight $\Q(n)=W_{-2n}\Q(n)$ and Hodge filtration $F^{-n}\Q(n)_\C = \Q(n)_\C\supset F^{-n+1}=(0)$. 
\end{ex}
\begin{defn}A Hodge structure $H$ is mixed Tate if $gr^W_{-2n}H= \bigoplus \Q(n)$ for all $n$. 
\end{defn}
\begin{lem}Let $V$ be a variety over $\C$. Assume $V$ admits a finite stratification $V=\amalg V_i$ by Zariski locally closed sets such that $H_c^*(V_i)$ is mixed Tate for all $i$. (Here $H_c$ is cohomology with compact supports.) Then $H^*_c(V)$ is mixed Tate.
\end{lem}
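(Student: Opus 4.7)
The plan is to induct on the number of strata in the finite stratification. The base case where $V$ consists of a single stratum is exactly the hypothesis. For the inductive step, the first thing to do is produce a stratum that is closed in $V$: since closures of strata are unions of strata (this is the definition of a stratification in the relevant sense) and $V$ has finitely many strata, by Noetherian induction some stratum $V_{i_0}$ must satisfy $\overline{V_{i_0}} = V_{i_0}$. Set $Z := V_{i_0}$ and $U := V \setminus Z$. Then $U$ is open in $V$ and inherits a stratification by the remaining $V_j$'s, which has strictly fewer strata and still satisfies the hypothesis.

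The key input is Deligne's theorem that for a closed immersion $Z \inj V$ with open complement $U \inj V$, the long exact sequence
\[
\cdots \to H^k_c(U,\Q) \to H^k_c(V,\Q) \to H^k_c(Z,\Q) \to H^{k+1}_c(U,\Q) \to \cdots
\]
is a long exact sequence of mixed Hodge structures. By the inductive hypothesis applied to the stratified open $U$, the groups $H^*_c(U,\Q)$ are mixed Tate; by hypothesis $H^*_c(Z,\Q) = H^*_c(V_{i_0},\Q)$ is mixed Tate. So each $H^k_c(V,\Q)$ sits in a short exact sequence of MHS whose outer terms are mixed Tate, namely $0 \to \operatorname{coker} \to H^k_c(V,\Q) \to \ker \to 0$, with the cokernel a quotient of $H^k_c(U,\Q)$ and the kernel a subobject of $H^k_c(Z,\Q)$.

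The remaining point, which is where the real content sits, is that the class of mixed Tate Hodge structures is closed under subobjects, quotients, and extensions inside the abelian category of MHS. Closure under subobjects and quotients follows because the weight filtration and its graded pieces are strict-exact functors, so $gr^W_{-2n}$ of a subobject or quotient of a mixed Tate MHS injects into, or surjects onto, a direct sum of $\Q(n)$'s and hence is again pure of type $(-n,-n)$. Closure under extensions is the slightly more delicate point: an extension $0 \to A \to B \to C \to 0$ with $A,C$ mixed Tate forces $gr^W_m B = 0$ for $m$ odd, while $gr^W_{-2n} B$ is an extension of $\bigoplus \Q(n)$ by $\bigoplus \Q(n)$ in the category of pure polarizable Hodge structures of weight $-2n$, which is semisimple; hence the extension splits and $B$ is mixed Tate. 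This last semisimplicity statement is the main ``obstacle,'' though it is a classical consequence of the existence of polarizations.

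Putting everything together, mixed Tateness of $H^k_c(V,\Q)$ follows from the mixed Tateness of the kernel and cokernel above, which in turn follow by subobject/quotient closure from the inductive hypothesis on $U$ and the assumption on $V_{i_0}$. This completes the induction and proves the lemma.
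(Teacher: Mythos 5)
Your proof is correct, but it takes a genuinely different route from the paper's. The paper applies the exact functor $gr^W_m$ directly to the spectral sequence relating $H^*_c(V_i)$ to $H^*_c(V)$, obtaining a spectral sequence in the category of pure weight-$m$ Hodge structures with $E_1$-terms that are direct sums of $\Q(-m/2)$; since extensions of $\Q(-m/2)$ by $\Q(-m/2)$ in that pure category split, the abutment $gr^W_m H^*_c(V)$ is again a direct sum of Tate twists, and one concludes. You instead induct on the number of strata, peel off a closed stratum $Z$, use the compactly-supported long exact sequence for $(U,Z)$, and invoke closure of mixed Tate Hodge structures under subobjects, quotients, and extensions. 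Both arguments ultimately rest on the same two facts --- strictness of the weight filtration for morphisms of mixed Hodge structures (hence exactness of $gr^W$), and the splitting of extensions of $\Q(n)$ by $\Q(n)$ inside a fixed pure weight --- so neither is really ``deeper''; yours is somewhat more elementary in that it replaces the spectral sequence by an induction and a two-term exact sequence, while the paper's avoids the need to exhibit a closed stratum.

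Two small remarks. First, you justify the existence of a closed stratum by the frontier condition (``closures of strata are unions of strata''), but the lemma as stated only assumes a finite partition into Zariski locally closed sets; fortunately the existence of a closed stratum in a finite locally-closed partition of a Noetherian space is automatic (take a minimal $\overline{V_{i_0}}$ under inclusion, observe that its generic points can lie in no other stratum's closure, and run a short Noetherian induction), or one can simply refine the partition so that the frontier condition holds. Second, your appeal to semisimplicity of polarizable pure Hodge structures to split the extension at the $gr^W$ level is overkill: a pure Hodge structure of weight $-2n$ that is an extension of a sum of $\Q(n)$'s by a sum of $\Q(n)$'s has forced Hodge filtration $F^{-n}=\text{all}$, $F^{-n+1}=0$, so the extension is just one of $\Q$-vector spaces and splits for free --- this is the elementary computation the paper is implicitly invoking when it says ``extensions of $\Q(p)$ are all split.''
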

\begin{proof}The functor $H^* \mapsto gr^W_pH^*$ is exact on the category of Hodge structures. We apply this functor to the spectral sequence which relates $H_c^*(V_i)$ to $H_c^*(V)$ and deduce a spectral sequence converging to $gr^W_pH^*(V)$ with initial terms direct sums of $\Q(p)$. Since extensions of $\Q(p)$ are all split, it follows that $gr^W_pH^*(V)$ is a direct sum of $\Q(p)$ so $H^*_c(V)$ is mixed Tate. 
\end{proof} 
Note of course that $H^*_c(V) = H^*(V)$ if the variety $V$ is proper. 
\begin{proof}[Proof of Proposition \ref{mixedtate}] Let $\ve: \P^{g-1} \to \N$ be as in the proof of Proposition \ref{Lambdaprop}. Define
\eq{}{T^m=\{\beta\ |\ \ve(\beta)\ge m\}.
}
It is clear that $T^m \subset \P^{g-1}$ is closed, and $T^{m+1}\subset T^m$. The sets $S^m:=T^m-T^{m+1}$ form a locally closed stratification on $\P^{g-1}$. Let $\sF$ be the constructible sheaf on $\P^{g-1}$ defined in Proposition \ref{Lambdaprop}. The fibres of $\sF$ over $S^m$ have constant rank, so $\sF|_{S^m}$ is a vector bundle and $\Lambda|_{S^m}$ is a projective bundle. It will suffice by the lemma to show $H^*_c(\Lambda|_{S^m})$ is mixed Tate, and by the projective bundle theorem this will follow if we show $H^*_c(S^m)$ is mixed Tate. 

The set $T^m$ can be described as follows. Let $Z\subset 2^E$ be the set of all subsets $z\subset E$ such that the span of $e^\vee|H_1(G), e\in z$ has codimension $<m$ in $H^1(G)$. Then $T^m$ is the set of $\beta$ such that $e^\vee(\beta)=0$ for at least one $e^\vee \in z$. Said another way, for any subset $W\subset E$ containing at least one edge from each $z\in Z$, let $L_W\subset \P^{g-1}$ be the set of those $\beta$ such that $e^\vee(\beta)=0$ for all $e\in W$. Then $T^m=\bigcup L_W$ is the union of the $L_W$. since the cohomology of a union of linear spaces is mixed Tate, we see that $H^*(T^m)$ is mixed Tate. Finally, from the long exact sequence relating the cohomologies of $T^m, T^{m+1}$ to the compactly supported cohomology of $S^m$ we deduce that $H^*_c(S^m)$ is mixed Tate as well. 
\end{proof}

We had mentioned the analogy between $X_G$ and the theta divisor $\Theta \subset J(C)$ of an algebraic curve $C$ of genus $n-1$. From this point of view, the birational map $\pi:\Lambda_G\to X_G$ is analogous to the map $\text{Sym}^{n-2}C \to \Theta$. Unlike the curve case, however, $\Lambda_G$ is not usually smooth. To understand this, we consider partitions $E(G) = E'\amalg E''$. Let $G', G'' \subset G$ be the unions of the corresponding edge sets. We say our partition is {\it non-trivial on loops} if neither $\{e^\vee\}_{e\in E'}$ nor $\{e^\vee\}_{e\in E''}$ span $H^1(G)$. 

We have seen that $\Lambda_G = \text{Proj}(\sF)$ is a projective fibre space over $\P^{g-1}$. The general fibres have dimension $n-g-1$. Of course, over the open set with fibres of dimension exactly $n-g-1$, $\Lambda_G$ is a projective bundle, hence smooth. Singularities can occur only when the fibre dimension jumps. 
\begin{prop}\label{singsprop}(i) The fibre of $\Lambda_G$ over $\beta \in \P^{g-1}$ has dimension $> n-g-1$ iff there exists a partition $E=E'\amalg E''$ which is non-trivial on loops such that $e^\vee(\beta)=0$ for all $e\in E'$. \nn
(ii) $\Lambda_G$ is singular iff there exists a partition $E=E'\amalg E''$ which is non-trivial on loops. 
\end{prop}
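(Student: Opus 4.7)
The overall strategy is to reinterpret both parts of the proposition in terms of the coherent sheaf $\sF$ on $\P^{g-1}$ from the proof of Proposition \ref{Lambdaprop}, using the jumping function $\ve : \P^{g-1} \to \N$ introduced there. Part (i) will follow by unwinding the definition of $\ve(\beta)$, while (ii) combines (i) with the remark preceding the proposition that $\Lambda_G$ is a projective bundle (hence smooth) over the locus where $\sF$ is locally free.

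For part (i), I would recall that the fibre of the projection $\Lambda_G \to \P^{g-1}$ over $\beta$ is $\P(\sF_\beta)$, with $\dim \sF_\beta = n - g + \ve(\beta)$. The jump condition $\dim > n-g-1$ is thus equivalent to $\ve(\beta) \geq 1$, which says that, writing $E''(\beta) := \{e : e^\vee(\beta) \neq 0\}$, the family $\{e^\vee\}_{e \in E''(\beta)}$ fails to span $H^1(G)$. For the forward direction, set $E'' := E''(\beta)$ and $E' := E \setminus E''$; then $\{e^\vee\}_{e \in E'}$ also fails to span, because its common zero locus in $H_1(G)$ contains the nonzero vector $\beta$. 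For the reverse direction, any partition with $e^\vee(\beta) = 0$ for all $e \in E'$ has $E''(\beta) \subseteq E''$, so non-spanning of $E''$ forces non-spanning of $E''(\beta)$.

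For part (ii), the direction ``no non-trivial partition implies smooth'' is immediate: in that case (i) gives $\ve \equiv 0$ on $\P^{g-1}$, so $\sF$ is locally free of rank $n-g$ and $\Lambda_G$ is a $\P^{n-g-1}$-bundle over $\P^{g-1}$. For the converse, given a non-trivial partition $E = E' \amalg E''$, I would first choose a nonzero $\beta$ in the common zero locus $N := \bigcap_{e \in E'} \ker e^\vee$, which is nonempty since $E'$ does not span, and then a nonzero $a \in k^{E''}$ with $M_a(\beta) = 0$. At the resulting $(a, \beta) \in \Lambda_G$, the Jacobian of the defining equations \eqref{15} has $A_e$-columns equal to $e^\vee(\beta) e^\vee$ and $B_j$-columns equal to $M_a(h_j)$; both lie in $\text{span}\{e^\vee : e \in \text{supp}(a) \cup E''(\beta)\}$, which is contained in $\text{span}\{e^\vee\}_{e \in E''}$ and hence is a proper subspace of $H^1(G)$. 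The Jacobian rank is then less than $g$, and since $\Lambda_G$ is a codimension-$g$ complete intersection, $(a, \beta)$ is a singular point.

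The main obstacle is the production of a nonzero $a \in k^{E''}$ with $M_a(\beta) = 0$: this amounts to finding a nonzero element in the kernel of the linear map $k^{E''} \to H^1(G)$, $(a_e) \mapsto \sum_{e \in E''} a_e e^\vee(\beta) e^\vee$, and one must verify this kernel is nontrivial. Either a direct dimension count (comparing $|E''|$ to the dimension of the image) suffices, or else one replaces $\beta$ by another element of $N$ on which some $e^\vee$ with $e \in E''$ vanishes, cutting down the image dimension. The non-trivial-on-loops hypothesis on \emph{both} sides of the partition will enter in an essential way here, through the non-spanning of both $\{e^\vee\}_{e \in E'}$ and $\{e^\vee\}_{e \in E''}$; the rest of the argument is routine bookkeeping.
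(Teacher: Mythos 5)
Your treatment of part (i) and of the ``no partition $\Rightarrow$ smooth'' half of part (ii) is correct and follows the paper: over $\beta$ the fibre is $\P(\sF_\beta)$ of dimension $n-g-1+\ve(\beta)$, and $\ve(\beta)\ge 1$ is exactly the non-spanning condition on $E''(\beta)$; if there is no non-trivial-on-loops partition then $\ve\equiv 0$ and $\Lambda_G$ is a projective bundle. Your Jacobian analysis for the proposed singular point $(a,\beta)$ (all columns lie in $\mathrm{span}\{e^\vee : e\in \mathrm{supp}(a)\cup\mathrm{supp}(\beta)\}$, a proper subspace, and $\Lambda_G$ is a codimension-$g$ complete intersection) is also the right mechanism.

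However, the ``main obstacle'' you flag --- producing a nonzero $a\in k^{E''}$ with $M_a(\beta)=0$ --- is a genuine gap, not routine bookkeeping, and neither of your two suggested remedies closes it. The ``direct dimension count'' only produces a nonzero kernel when $\{e^\vee\}_{e\in E''}$ is linearly dependent on $H_1(G)$, i.e.\ when the spanning subgraph $G[E']$ is disconnected. The fallback of moving $\beta$ inside $N=\bigcap_{e\in E'}\ker e^\vee$ so that some $e^\vee$, $e\in E''$, kills it requires $\dim N\ge 2$, i.e.\ $h_1(G[E''])\ge 2$. Both can fail simultaneously. The cleanest instance is the banana graph with $n=4$ parallel edges ($2$ vertices, $g=3$): for the partition $E'=\{e_1,e_2\}$, $E''=\{e_3,e_4\}$, which is non-trivial on loops, one has $\dim N=1$, $\{e_3^\vee,e_4^\vee\}$ independent, and the map $k^{E''}\to H^1(G)$ is injective. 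Writing out the three defining equations $a_i\beta_i+a_4(\beta_1+\beta_2+\beta_3)=0$ one checks directly that the $3\times 7$ Jacobian has rank $3$ at every point of $\Lambda_G$; here $\Lambda_G$ is the minimal resolution of the four-nodal cubic $X_G$ and is smooth, even though a non-trivial-on-loops partition exists and the fibre over $\beta=e_3-e_4$ jumps. So the ``existence of a non-trivial partition $\Rightarrow$ singular'' implication, as stated, is not salvageable by the recipe you (and the paper) propose.

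It is worth noting that the paper's own proof has exactly the same lacuna: it sets $a_e=0$ for $e\in E'$, asserts ``the partial derivatives all vanish,'' and never checks that a nonzero such $a$ with $(a,\beta)\in\Lambda_G$ exists. Your writeup, which reduces the claim to the rank of the Jacobian rather than to total vanishing of partials, is actually the more accurate formulation of the singularity criterion, and your explicit flagging of the existence problem is a real improvement over the paper's treatment. But the last sentence of your proposal should be withdrawn: the non-trivial-on-loops hypothesis on both halves does \emph{not} automatically supply the needed $a$, and the remaining step is where the substance lies. Any correct version must either add a hypothesis (e.g.\ excluding configurations where both $G[E']$ and $G[E'']$ are connected spanning subgraphs with $h_1=1$) or locate a singular point over a cycle $\beta$ attached to a \emph{different} partition; the double-triangle graph (two parallel triangles on $3$ vertices) illustrates the second phenomenon, where the singular point sits over $\beta=e_1-e_4$ rather than over a cycle supported on one of the two triangles.
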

\begin{proof} For (i), we may take $E''=\{e\ |\ e^\vee(\beta)\neq 0\}$. Assertion (i) is now straightforward from the definition of $\ve(\beta)$ in the proof of Proposition \ref{Lambdaprop}.

For (ii), note that if no such partition exists, then $\Lambda_G$ is a projective bundle over $\P^{g-1}$, hence smooth, so the existence of a partition is certainly necessary. Assume such a partition exists. The equation for $\Lambda_G$ can be written in vector form $\sum_{e\in E} a_ee^\vee(\beta)e^\vee|_{H_1(G)}$ (compare \eqref{15}). We take $\beta$ so $e^\vee(\beta)=0$ for all $e\in E'$ and we take $a_e=0$ for all $e\in E'$. (The value of the equation for such $\beta$ does not depend on the choice of $a_e,\ e\in E'$, so this is a free choice.) It is clear that the partial derivatives at such a point with respect to the $a_e$ and also with respect to coordinates on $H_1(G)$ all vanish, so this is a singular point. 
\end{proof}
\begin{remark}The singular structure of $\Lambda_G$ is more complicated than the above argument suggests because it may happen that there exists $\emptyset \neq F \subsetneq E'$ such that $e^\vee, e\in F\amalg E''$ still does not span $H^1(G)$. In such a case, it suffices to take $a_e=0$ for $e\in E'-F$. 
\end{remark}
\begin{ex}[Wheel and spoke graphs with $3$ and $4$ edges.]  (i) The wheel with $3$ spokes graph $G_3$ has vertices $1, 2, 3, 4$ and edges 
\eq{}{\{1,2\},\{2,3\},\{3,1\},\{1,4\},\{2,4\},\{3,4\}.
}
It has $3$ loops, but it is easy to check there are no partitions of the edges which are non-trivial on the loops. It follows from Prop. \ref{singsprop} that $\Lambda_{G_3}$ is a $\P^2$-bundle over $\P^2$ and hence non-singular. \nn
(ii) The wheel with $4$ spokes $G_4$ has $5$ vertices, $4$ loops, and $8$ edges:
\eq{}{\{1,2\},\{2,3\},\{3,1\},\{4,1\},\{1,5\},\{2,5\}, \{3,5\},\{4,5\}.
}
With the aid of a computer, one can show that $\Lambda_{G_4} \to \P^3$ is a $\P^3$-bundle over $\P^3-4$ points. Over the $4$ points, the fibre jumps to $\P^4$. 
\end{ex}

We next calculate $gr^WH^*(\Lambda_G)$ for a general graph $G$. Let $f: \Lambda_G \to \P^{n-1}$ be the projection. 
\begin{lem}The sheaves $R^af_*\Q_\Lambda$ are zero for $a$ odd. For $a=2b$, we have
\eq{}{R^af_*\Q_\Lambda \cong \Q(-b)|_{S_b}
}
where $S_b \subset \P^{g-1}$ is the closed set where the fibre dimension of $f$ is $\ge b$. In particular, $S_b=\P^{g-1}$ for $b\le n-g-1$. 
\end{lem}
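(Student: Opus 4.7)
The plan is to analyze $f$ via the stratification of $\P^{g-1}$ by the locally closed sets $S^m = T^m \setminus T^{m+1}$ introduced in the proof of Proposition \ref{mixedtate}, then globalize via the hyperplane class from the ambient $\P^{n-1}$. I read $f$ as the projection $\Lambda_G \to \P^{g-1}$: the claim that $S_b \subset \P^{g-1}$ and that $S_b = \P^{g-1}$ for $b \le n-g-1$ only makes sense for this second projection, since $\Lambda_G = \text{Proj}(\text{Sym}\,\sF)$ sits over $\P^{g-1}$ with generic fiber dimension $n-g-1$.

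First I would recall from Proposition \ref{Lambdaprop} and its proof that over the stratum $S^m$ (where $\ve(\beta) = m$) the sheaf $\sF$ is locally free of rank $n-g+m$, so $f^{-1}(S^m) \to S^m$ is a Zariski-locally trivial $\P^{n-g-1+m}$-bundle. In particular the fiber dimension at $\beta$ is exactly $n-g-1+\ve(\beta)$, and $S_b = T^{b-n+g+1}$ is closed in $\P^{g-1}$, equal to all of $\P^{g-1}$ when $b \le n-g-1$.

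Next, proper base change applied stratum by stratum gives, for $\beta \in S^m$,
\[
(R^a f_* \Q_\Lambda)_\beta \;\cong\; H^a(\P^{n-g-1+m}, \Q),
\]
which vanishes in odd degree, and for $a=2b$ equals $\Q(-b)$ precisely when $\beta \in S_b$, and vanishes otherwise. This matches the stalks of $\Q(-b)|_{S_b}$ on the nose.

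The hard part will be upgrading this stalkwise identification to an honest isomorphism of sheaves, since a priori $R^{2b} f_* \Q_\Lambda|_{S_b}$ could be a nontrivial local system. To rule this out I would use the hyperplane class $h \in H^2(\Lambda_G, \Q(1))$ defined as the pullback of $c_1(\sO_{\P^{n-1}}(1))$ under the first projection. Because each fiber $f^{-1}(\beta)$ is a linear subspace of $\{\beta\} \times \P^{n-1}$, the restriction of $h$ to $f^{-1}(\beta)$ is the hyperplane class of that projective space; thus $h^b$ furnishes a global section of $R^{2b} f_* \Q(b)$ whose value at each $\beta \in S_b$ generates the stalk. This trivializes $R^{2b} f_* \Q(b)|_{S_b}$, and together with the vanishing off $S_b$ (which is closed, so extension-by-zero and direct image coincide) yields $R^{2b} f_* \Q_\Lambda \cong \Q(-b)|_{S_b}$ as claimed.
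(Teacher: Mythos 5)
Your proof is correct and follows essentially the same approach as the paper's: the paper's terse argument is that $p^*: H^a(\P^{n-1},\Q)_{\P^{g-1}} \to R^a f_*\Q_\Lambda$, induced by the second projection $p:\Lambda_G\to\P^{n-1}$, is a surjection from a constant sheaf with image supported on $S_b$, both assertions checked fiberwise via proper base change --- exactly what your hyperplane-class argument $h = p^*c_1(\sO_{\P^{n-1}}(1))$ spells out, since a surjection from a constant sheaf onto something whose stalks you have already computed automatically rules out a nontrivial local system. You are also right to read $f$ as the projection to $\P^{g-1}$; the sentence before the lemma declaring $f:\Lambda_G\to\P^{n-1}$ is a slip, as both the Leray spectral sequence \eqref{23} over $\P^{g-1}$ and the paper's own proof (which calls $p:\Lambda_G\to\P^{n-1}$ ``the other projection'') confirm.
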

\begin{proof}Let $p: \Lambda_G \to \P^{n-1}$ be the other projection (recall $\Lambda_G \subset \P^{g-1}\times \P^{n-1}$.) We have a map of sheaves on $\Lambda_G$
\eq{}{p^*: H^a(\P^{n-1},\Q)_{\P^{g-1}} \to R^af_*\Q_{\Lambda_G}.
}
The lemma follows from the fact that $p^*$ is surjective with support on $S_b$. (Both assertions are checked fibrewise.) 
\end{proof}

Consider the Leray spectral sequence 
\eq{23}{E^{pq}_2 = H^p(\P^{g-1}, R^qf_*\Q_{\Lambda_G}) \Rightarrow H^{p+q}(\Lambda_G,\Q).
}
It follows from the lemma that $E_2^{pq} = H^p(S_{q/2},\Q(-q/2))$ (zero for $q$ odd) has weights $\le p+q$ with equality if either $p=0$ or $q\le 2(n-g-1)$. Since $E_s,\ s\ge 2$ is a subquotient of $E_2$ we get the same assertion for $E_s$. From the complex
\eq{}{E_s^{p-s,q+s-1} \to E_s^{pq}\to E_s^{p+s,q-s+1}
}
we deduce
\begin{prop} For the spectral sequence \eqref{23} we find in the range $q\le 2(n-g-1)$ or $p=0,\ q\le 2(n-g)$ that $E_\infty^{pq}= \Q(-(p+q)/2)$ if both $p,q$ are even, and $E_\infty^{pq}=(0)$ otherwise. In particular, the pullback $H^s(\P^{n-1}\times \P^{g-1},\Q) \to H^s(\Lambda_G,\Q)$ is an isomorphism for $s\le 2(n-g)$. 
\end{prop}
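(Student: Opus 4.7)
The plan is to run a weight-filtration argument: each $E_r$-page of \eqref{23} inherits a mixed Hodge structure from $E_2$, the differentials are morphisms of mixed Hodge structures, and hence by strictness any differential between terms with disjoint weight ranges must vanish. My goal is to check that in the stated range every potentially non-zero differential out of or into $(p,q)$ has precisely this form.

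First I would unpack $E_2^{pq}$ using the preceding lemma: since $R^a f_*\Q_\Lambda$ is $0$ for $a$ odd and $\Q(-b)|_{S_b}$ for $a = 2b$,
\[
E_2^{pq} = H^p(S_{q/2},\Q(-q/2)) \quad (q\text{ even}),
\]
and zero otherwise. Each $S_b\subset \P^{g-1}$ is a closed projective subvariety, so $H^p(S_b,\Q)$ has weights $\le p$; equality holds (and $H^p$ is $\Q(-p/2)$ for $p$ even in range) as soon as $S_b = \P^{g-1}$. By construction $S_b = \P^{g-1}$ for $b \le n-g-1$ because the generic fibre of $f$ has dimension $n-g-1$. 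Combined with the Tate twist this makes $E_2^{pq}$ pure of weight $p+q$, and exactly $\Q(-(p+q)/2)$ if $p, q$ are both even and $p\le 2(g-1)$, throughout $q \le 2(n-g-1)$. The edge case $p=0,q=2(n-g)$ is handled separately, since $H^0(S_{n-g},\Q)$ is automatically pure of weight $0$.

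Second I would analyze differentials $d_r:E_r^{pq}\to E_r^{p+r,q-r+1}$. Vanishing of $R^{\text{odd}}f_*\Q$ kills every $d_r$ with $r$ even (the target row is zero), so only odd $r\ge 3$ is relevant. For a source $(p,q)$ in the good range, the target index $q' = q-r+1$ satisfies $q' \le 2(n-g-1)$ (whether one starts from $q \le 2(n-g-1)$ or from $q = 2(n-g)$ with $r\ge 3$), so $E_2^{p+r,q'} = H^{p+r}(\P^{g-1})(-q'/2)$ is pure of weight $p+q+1$. Strictness of morphisms of mixed Hodge structures then forces $d_r = 0$ on this outgoing arrow. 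For an incoming differential from $(p-r,q+r-1)$ to $(p,q)$, the source has weights $\le (p-r)+(q+r-1) = p+q-1$, strictly less than the pure weight $p+q$ of the target, so again the map is zero. Because $E_r$ is a subquotient of $E_2$ these weight bounds propagate to every later page, so the degeneration conclusion $E_\infty^{pq} = E_2^{pq}$ holds throughout the stated range and gives the asserted Tate objects.

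Finally, for the ``in particular'' clause I would compare \eqref{23} with the Leray spectral sequence of $\pi_2:\P^{n-1}\times\P^{g-1}\to\P^{g-1}$ via the inclusion $\iota:\Lambda_G\hookrightarrow \P^{n-1}\times\P^{g-1}$. The upstairs spectral sequence degenerates at $E_2$ by K\"unneth, and the map on $E_2^{pq}$ is the pullback $H^p(\P^{g-1},\Q(-q/2))\to H^p(\P^{g-1},\Q(-q/2)|_{S_{q/2}})$, an isomorphism whenever $S_{q/2} = \P^{g-1}$. For every bidegree $(p,q)$ with $p+q = s \le 2(n-g)$ this condition holds (the bidegree $(0,2(n-g))$ is treated as in the $E_\infty$ calculation), so the comparison is iso on $E_2$ and, by the degeneration of both sides, on the abutments. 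The main obstacle is the purity/weight bookkeeping for off-range bidegrees whose differentials might touch the good range; once that is settled, the fact that each $E_\infty$-piece is a pure Tate object (and $\mathrm{Ext}^1$ vanishes between distinct Tate twists) makes the abutment-to-associated-graded extension problem trivial.
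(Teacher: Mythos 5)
Your approach coincides with the paper's: compute $E_2^{pq}=H^p(S_{q/2},\Q(-q/2))$ from the lemma, observe these are pure Tate of weight $p+q$ in the stated range (because $S_b=\P^{g-1}$ for $b\le n-g-1$, and because $H^0$ of a proper variety is pure of weight $0$), propagate the weight bounds to later pages as subquotients, and kill the differentials by weight and parity. The paper compresses this to one sentence around the complex $E_s^{p-s,q+s-1}\to E_s^{pq}\to E_s^{p+s,q-s+1}$; your explicit parity check (only odd $r$ matters) and your observation that the target of the outgoing $d_r$ has $q-r+1\le 2(n-g-1)$ and is therefore pure of weight $p+q+1$ are exactly the right way to fill that sentence in, and the comparison with the Leray spectral sequence of $\P^{n-1}\times\P^{g-1}\to\P^{g-1}$ is the natural route to the ``in particular.''

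The one place your argument is under-justified is the boundary bidegree $(p,q)=(0,2(n-g))$. You note correctly that $H^0(S_{n-g},\Q)$ is pure of weight $0$, but the proposition asserts $E_\infty^{0,2(n-g)}=\Q(-(n-g))$, i.e.\ one-dimensional; purity alone does not give this. It would require $S_{n-g}$ to be nonempty and connected, which fails in general. In the paper's own wheel-with-four-spokes example ($n=8$, $g=4$), $S_{n-g}=S_4$ consists of four isolated points, no differential reaches the corner $(0,8)$, and $E_\infty^{0,8}=\Q^4(-4)$; correspondingly the pullback $H^{8}(\P^{7}\times\P^{3})\to H^{8}(\Lambda_{G_4})$ has source $\Q(-4)^4$ and target with seven-dimensional associated graded, so it is not an isomorphism. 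This is a defect already present in the proposition as stated, not something peculiar to your write-up, but you should not claim to have derived the one-dimensional Tate object in that edge case. Your proof is airtight for $q\le 2(n-g-1)$ (and for $s\le 2(n-g-1)$ in the ``in particular''), which is all that the log-divergent corollary and the theorem on $gr^W_aH^a(X_G)$ actually use; it is worth flagging that restriction rather than silently absorbing the edge bidegree.
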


An interesting special case is that of {\it log divergent graphs}. By definition, $G$ is log divergent if $n=2g$. 
\begin{cor} For $G$ log divergent, $W_{n-3}H^{n-2}(X_G,\Q)$ dies in $H^{n-2}(\Lambda_G,\Q)$. 
\end{cor}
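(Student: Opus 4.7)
The plan is to combine the proposition on pullback isomorphisms with the strictness of morphisms of mixed Hodge structures to force the low-weight part of $H^{n-2}(X_G)$ to die in $H^{n-2}(\Lambda_G)$.

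First I would verify the numerics so that the previous proposition applies. Since $G$ is log divergent we have $n = 2g$, and consequently
\[
n-2 \;=\; 2g-2 \;=\; 2(n-g)-2 \;\le\; 2(n-g).
\]
Therefore, by the proposition, the restriction map
\[
H^{n-2}(\P^{n-1}\times \P^{g-1},\Q) \longrightarrow H^{n-2}(\Lambda_G,\Q)
\]
induced by the inclusion $\Lambda_G \subset \P^{n-1}\times \P^{g-1}$ is an isomorphism of mixed Hodge structures.

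Next I would identify the Hodge-theoretic nature of the right-hand side. The K\"unneth formula gives
\[
H^{n-2}(\P^{n-1}\times \P^{g-1},\Q) \;=\; \bigoplus_{p+q=n-2} H^p(\P^{n-1},\Q)\otimes H^q(\P^{g-1},\Q),
\]
and since the cohomology of projective space vanishes in odd degree and equals $\Q(-r)$ in degree $2r$, each nonzero summand is of the form $\Q(-p/2)\otimes\Q(-q/2)\cong \Q(-(n-2)/2)$. (Note $n-2 = 2g-2$ is even.) Thus $H^{n-2}(\Lambda_G,\Q)$ is a direct sum of copies of $\Q(-(n-2)/2)$; in particular, it is pure of weight $n-2$, so $W_{n-3}H^{n-2}(\Lambda_G,\Q)=0$.

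Finally, the birational morphism $\pi:\Lambda_G\to X_G$ induces a pullback $\pi^{*}: H^{n-2}(X_G,\Q) \to H^{n-2}(\Lambda_G,\Q)$ which is a morphism of mixed Hodge structures and is therefore strictly compatible with the weight filtration. Consequently $\pi^{*}(W_{n-3}H^{n-2}(X_G,\Q))\subseteq W_{n-3}H^{n-2}(\Lambda_G,\Q)=0$, which is precisely the claim. There is no serious obstacle here: the entire argument rests on the previous proposition together with the elementary fact that morphisms of mixed Hodge structures are strict for $W_\bullet$, so the only thing to double-check is that the inequality $n-2\le 2(n-g)$ holds in the log divergent case, which it does with room to spare.
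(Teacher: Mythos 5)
Your argument is correct and is exactly the deduction the paper intends: the corollary is left unproved because it follows immediately from the preceding proposition, and you fill in precisely that chain of reasoning. One minor simplification: you don't need strictness of $W_\bullet$ under morphisms of mixed Hodge structures — mere functoriality ($\pi^*$ carries $W_{n-3}$ into $W_{n-3}$) already suffices, since you have shown $W_{n-3}H^{n-2}(\Lambda_G,\Q)=0$.
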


Concerning the motive of the graph hypersurface $X_G$ for an arbitrary graph $G$ we deduce
\begin{thm}$gr^W_aH^a(X_G,\Q)$ is pure Tate for any $a\in \Z$.
\end{thm}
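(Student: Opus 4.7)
The plan is to use the following general principle from Deligne's theory of mixed Hodge structures: for any proper complex algebraic variety $X$ and any proper surjection $f : Y \to X$ with $Y$ smooth projective, $W_{a-1} H^a(X, \Q)$ lies in the kernel of $f^* : H^a(X, \Q) \to H^a(Y, \Q)$, and the induced map
$$gr^W_a H^a(X, \Q) \inj H^a(Y, \Q)$$
is injective. Since $Y$ is smooth projective, $H^a(Y, \Q)$ is pure of weight $a$; if $H^a(Y,\Q)$ is moreover mixed Tate, purity forces it to be pure Tate, i.e.\ a sum of copies of $\Q(-a/2)$, and every sub-Hodge-structure of a pure Tate Hodge structure is pure Tate. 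So it suffices to exhibit one such $Y$ mapping properly onto $X_G$ and having mixed Tate cohomology.

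The natural candidate is a resolution of singularities $\rho : Y \to \Lambda_G$, composed with $\pi : \Lambda_G \to X_G$. By Proposition \ref{Lambdaprop} the composite $\pi \circ \rho$ is proper birational, hence surjective, and the problem reduces to showing that $H^*(Y, \Q)$ is mixed Tate.

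Since $\Lambda_G$ itself is mixed Tate by Proposition \ref{mixedtate}, I would argue inductively by iterating the abstract blow-up (cdh) exact sequence
$$\cdots \to H^a_c(\Lambda_G) \to H^a_c(Y) \oplus H^a_c(Z) \to H^a_c(\rho^{-1}(Z)) \to H^{a+1}_c(\Lambda_G) \to \cdots$$
(with $Z = \Sing\Lambda_G$) applied at each step of a Hironaka resolution. Because mixed Tate Hodge structures form a full subcategory closed under extensions and subquotients, it is enough to show that every blow-up center and every total preimage encountered is mixed Tate. Proposition \ref{singsprop} describes $\Sing \Lambda_G$ as the union over edge-partitions $E=E'\amalg E''$ nontrivial on loops of subvarieties of $\P^{n-1} \times \P^{g-1}$ cut out by the linear incidence conditions $a_e = 0$ and $e^\vee(\beta) = 0$ for $e\in E'$ — each piece is a product of linear subspaces — and the same description applies to the iterated singular loci refined by finer partitions. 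All these strata are therefore mixed Tate by the stratification technique already used in the proof of Proposition \ref{mixedtate}.

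The hardest step will be the detailed bookkeeping: verifying that a Hironaka-type resolution of $\Lambda_G$ can actually be arranged so that every center blown up, every exceptional divisor, and every strict transform of a singular stratum remains mixed Tate throughout the process. The linear-algebraic description of the partition-indexed strata from Proposition \ref{singsprop} makes this plausible, but tracking how these loci and their strict transforms behave under successive blow-ups — and confirming that each intermediate variety still admits a mixed-Tate stratification — is the real bottleneck of the argument.
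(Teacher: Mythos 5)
Your first paragraph correctly identifies the tool — Deligne's result (Prop.\ 8.2.5 of \cite{D}) giving an injection $gr^W_a H^a(X_G,\Q)\inj H^a(Y,\Q)$ for a proper surjection $Y\to X_G$ with $Y$ smooth projective, and the fact that a weight-$a$ sub-Hodge-structure of a mixed Tate Hodge structure is pure Tate — and you correctly set up the resolution $\widetilde\Lambda_G \to \Lambda_G \to X_G$. But you then head off in a much harder direction: you try to prove that $H^*(\widetilde\Lambda_G,\Q)$ itself is mixed Tate, and you honestly flag that you cannot complete the bookkeeping through a Hironaka resolution. That bookkeeping is indeed a genuine obstruction: there is no reason a priori that the successive centers, exceptional divisors, and strict transforms in a resolution of $\Lambda_G$ all stay mixed Tate (strict transforms in particular are not controlled by the blow-up exact sequence alone), so this route does not close.

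The missing observation — which makes the paper's proof three lines long — is that you never need $H^a(\widetilde\Lambda_G)$ to be mixed Tate, only the \emph{image} $\rho^*\pi^*H^a(X_G)\subset H^a(\widetilde\Lambda_G)$. Since the pullback factors as $H^a(X_G)\xrightarrow{\pi^*}H^a(\Lambda_G)\xrightarrow{\rho^*}H^a(\widetilde\Lambda_G)$, this image is a quotient of $\pi^*H^a(X_G)\subset H^a(\Lambda_G)$, hence a subquotient of $H^a(\Lambda_G,\Q)$. The category of mixed Tate Hodge structures is closed under subquotients, and $H^a(\Lambda_G,\Q)$ is mixed Tate by Proposition \ref{mixedtate}; since the image is pure of weight $a$ by Deligne, it is pure Tate. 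No resolution of singularities needs to be analyzed at all, and your entire third and fourth paragraphs (the cdh/blow-up induction and the stratification analysis of $\Sing\Lambda_G$) can be deleted.
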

\begin{proof}Consider the maps $\widetilde\Lambda_G \xrightarrow{\rho} \Lambda_G \xrightarrow{\pi} X_G$, where $\widetilde\Lambda_G$ is a resolution of singularities of $\Lambda_G$. By \cite{D}, Prop. 8.2.5, the image $\rho^*\pi^*H^a(X_G) \subset H^a(\widetilde\Lambda_G)$ is isomorphic to $gr^W_aH^a(X_G,\Q)$. This image is a subquotient of $H^a(\Lambda_G,\Q)$ which is mixed Tate by Prop. \ref{mixedtate}. Since the image is pure of weight $a$, the theorem follows. 
\end{proof}

\section{The Second Symanzik Polynomial for Graphs}\label{2symanzik}

The second Symanzik polynomial for a graph $G$ depends on masses and external momenta. More precisely, to each vertex $v$ one associates $p_v\in \R^D$ where $D$ is the dimension of space-time. The conservation of momentum condition is
\eq{}{\sum_v p_v=0.
}
In addition, to each edge $e$ is attached a mass $m_e \in \R$. The {\it propagator} $f_e$ associated to an edge $e$ is defined via the diagram
\eq{}{\begin{CD}H_1(G,\R^D) @>>> (\R^D)^E @>e^{\vee,2}-m_e^2>> \R \\
@. @VV \partial V \\
@. p\in(\R^D)^{V,0} 
\end{CD}
}
By definition $f_e$ is the function $e^{\vee,2}-m_e^2$ restricted to $\partial^{-1}(p) \cong H_1(G,\R^D)$. The amplitude is 
\eq{16}{A_G(p,m) := \int_{\partial^{-1}(p)} \frac{d^{Dg}x}{\prod_{e\in E} f_e}
}
Of course, $\partial^{-1}(p)$ can be identified (non-canonically) with $H_1(G, \R^D)$ and the integral can be viewed as an integral over $H_1(G, \R^D)$. 

Let $n=\# E(G)$. We consider edge variables $A_e$ as homogeneous coordinates on $\P^{n-1}$. (Sometimes it is convenient to order the variables and write $A_i$ rather than $A_e$.) Write 
\eq{}{\Omega := \sum (-1)^{i-1} A_idA_1\wedge\cdots \wedge\widehat{A_i}\wedge\cdots \wedge dA_n
}
For $F(A_e)$ homogeneous of degree $n$, the ratio $\Omega/F$ is a meromorphic form of top degree $n-1$ on $\P^{n-1}$. A chain of integration $\sigma$ is defined by
\eq{}{\sigma = \{(\dotsc,a_e,\dotsc)\ |\ a_e \ge 0\} \subset \P^{n-1}(\R)
}
A choice of ordering of the edges orients $\sigma$. 
\begin{ex}Suppose $G$ is a tree, i.e. $g=0$. Then $\partial^{-1}(p)$ is a point, and the integral simply becomes evaluation of $\frac{1}{\prod_e f_e}$ at this point. If, for example, $G$ is just a string with vertices $\{1,2,\dotsc,n\}$ and edges $(i,i+1),\ 1\le i\le n-1$, then with evident orientation we have $\partial(i,i+1)=(i+1)-(i)$. Write $p_i$ for the external momentum at the vertex $i$. We must find $q_{i,i+1}\in \R^D,\ 1\le i\le n$ such that $q_{i-1,i}-q_{i,i+1}=p_i, 1\le i\le n$ where $q_{0,1}=q_{n,n+1}=0$. This yields $q_{i,i+1}=-p_1-p_2-\cdots -p_i$ and the amplitude is
\ml{}{A_G(p,m)=\\
\frac{1}{(p_1^2-m_1^2)((p_1+p_2)^2-m_2^2)\cdots ((p_1+\dotsc+p_{n-1})^2-m_{n-1}^2)}.
}
\end{ex}
\begin{lem}[Schwinger parameters] Viewing the $f_i$ as independent coordinates, we have
\eq{}{\frac{1}{\prod_{i=1}^n f_i} = (n-1)! \int_\sigma \frac{\Omega}{(\sum A_if_i)^n}. 
}
\end{lem}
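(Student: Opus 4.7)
The plan is to treat the $f_i$ as independent positive real parameters (the general case follows by analytic continuation) and reduce the projective integral on the right to a product of elementary Laplace integrals. The starting point is the scalar identity
\[
\frac{1}{f_i} \;=\; \int_0^\infty e^{-\alpha_i f_i}\, d\alpha_i, \qquad \mathrm{Re}(f_i)>0,
\]
which, multiplied over $i=1,\dots,n$ and combined with Fubini, gives
\[
\frac{1}{\prod_{i=1}^n f_i} \;=\; \int_{[0,\infty)^n} e^{-\sum_i \alpha_i f_i}\, d\alpha_1\cdots d\alpha_n.
\]
The right-hand side of the lemma is naturally an integral over a simplex, so the first key step is to introduce polar-type coordinates on $[0,\infty)^n$ that separate scale from direction.

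Next I would substitute $\alpha_i = t\, A_i$, where $t \in [0,\infty)$ and $A = (A_1,\dots,A_n)$ ranges over the standard simplex $\Delta^{n-1} = \{A_i\ge 0,\ \sum A_i = 1\}$. The Jacobian of this change of variables is $t^{n-1}$, so the integral becomes
\[
\int_{\Delta^{n-1}} \!\!\int_0^\infty e^{-t \sum_i A_i f_i}\, t^{n-1}\, dt\, d\mu_{\Delta}(A),
\]
where $d\mu_\Delta$ is the induced measure on $\Delta^{n-1}$. The inner integral is the Gamma integral
\[
\int_0^\infty t^{n-1} e^{-t c}\, dt \;=\; \frac{(n-1)!}{c^n}, \qquad c=\sum_i A_i f_i > 0,
\]
so we get
\[
\frac{1}{\prod_i f_i} \;=\; (n-1)! \int_{\Delta^{n-1}} \frac{d\mu_\Delta(A)}{\bigl(\sum_i A_i f_i\bigr)^n}.
\]

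The last step is to identify this simplex integral with the projective integral $\int_\sigma \Omega/(\sum A_i f_i)^n$. Because the integrand $\Omega/F$ with $F$ homogeneous of degree $n$ is a well-defined top form on $\P^{n-1}$, any affine slice of the positive cone in $\R^n$ transverse to the scaling action computes the same value. Using the slice $\sum A_i = 1$, one checks that the restriction of $\Omega$ to this hyperplane is (up to orientation) the Lebesgue measure $d\mu_\Delta$ on $\Delta^{n-1}$: a direct calculation shows that $\Omega|_{\{\sum A_i = 1\}} = \pm dA_1\wedge\cdots\wedge dA_{n-1}$ after eliminating $dA_n = -\sum_{i<n} dA_i$. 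With the orientation coming from the chosen ordering of edges, the sign is $+$, and the identification with the simplex integral is complete, yielding the stated formula.

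The only real subtlety is the matching of the projective form $\Omega$ with the simplex measure and the orientation check, which is elementary but worth doing carefully (one could equally well work in the affine chart $A_n=1$ and change variables $B_i = A_i f_i$ for $i<n$, then integrate by iterated one-variable integration, obtaining the same constant $(n-1)!$ by induction via $\int_0^\infty (B+c)^{-n}\, dB = 1/((n-1)c^{n-1})$). Everything else is a routine application of Fubini and the Gamma integral, both of which are legitimate because the parameters $f_i$ are taken to have positive real part.
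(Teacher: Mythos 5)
Your proof is correct, but it takes a genuinely different route from the paper's. The paper works directly in the affine chart $A_n=1$, where $\Omega/(\sum A_if_i)^n$ becomes $da_1\cdots da_{n-1}/(a_1f_1+\cdots+a_{n-1}f_{n-1}+f_n)^n$, and establishes the identity by induction on $n$, integrating out one $a_i$ at a time via the elementary one-variable formula $\int_0^\infty (a f + c)^{-n}\,da = 1/((n-1)f\,c^{n-1})$ (phrased as an exact-form computation so that Stokes' theorem does the work). Your argument instead goes through the exponential (Gamma-integral) parametrization $1/f_i = \int_0^\infty e^{-\alpha_i f_i}\,d\alpha_i$, Fubini, and the scale/direction decomposition $\alpha = tA$ of the positive orthant, then integrates out $t$ with the Gamma function and matches $\Omega$ with the simplex measure on $\{\sum A_i=1\}$. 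Your route is closer to the physics origin of "Schwinger parameters" and makes the appearance of $(n-1)!$ transparent as $\Gamma(n)$; the paper's route avoids Fubini and the change-of-variables entirely and is arguably the shorter purely algebraic verification. The alternative you sketch in your closing parenthesis is essentially what the paper does. One small point worth being explicit about in your version: the orientation/sign check $\Omega|_{\{\sum A_i=1\}} = (-1)^{n-1}\,dA_1\wedge\cdots\wedge dA_{n-1}$ should be done once and compared with the orientation of $\sigma$ fixed by the ordering of the edges; you gesture at this correctly but it is the only place where a sign could be silently lost.
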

\begin{proof}In affine coordinates $a_i=A_i/A_n$ the assertion becomes
\eq{}{\frac{1}{\prod_{i=1}^n f_i} = (n-1)! \int_{0^{n-1}}^{\infty^{n-1}} \frac{da_1\cdots da_{n-1}}{(a_1f_1+\cdots +a_{n-1}f_{n-1}+f_n)^n}.
}
We have
\ml{}{d\Big(\frac{da_2\cdots da_{n-1}}{(a_1f_1+\cdots +a_{n-1}f_{n-1}+f_n)^{n-1}}\Big) = \\
-(n-1)f_1\frac{da_1\cdots da_{n-1}}{(a_1f_1+\cdots +a_{n-1}f_{n-1}+f_n)^n},
}
and the result follows by induction. 
\end{proof}

The amplitude \eqref{16} can thus be rewritten
\eq{}{A_G =\frac{1}{(n-1)!} \int_{\R^{Dg}}d^{Dg}x\int_\sigma\frac{\Omega}{(\sum A_ef_e)^n}. 
}

We would like to interchange the two integration operations. Following the physicists, we take the metric on $\R^D$ to be Euclidean. There is still an issue of convergence because the quadratic form is only positive semi-definite, but as mathematicians we are looking for interesting motives to study. The issue of convergence of a particular period integral is of secondary concern. 

We must evaluate 
\eq{24}{\int_{\R^{Dg}}\frac{d^{Dg}x}{(\sum A_ef_e)^n}.
}
To this end, we first complete the square for the quadratic form $\sum A_ef_e$. We identify $H_1(G,\R^D) = (\R^D)^g$ and write 
$$x_i=(x_i^1,\dotsc,x_i^D):H_1(G,\R^D) \to \R^D; \quad1\le i\le g;\quad \mathbf x = ({}^tx_1,\dotsc,{}^tx_g).
$$
Similarly, $\mathbf{p}=(\dotsc,{}^tp_v,\dotsc)_{v\neq v_0}$ where $p_v\in \R^D$ are the external momenta and we omit one external vertex $v_0$. 
We write
\eq{25}{\sum A_ef_e = \mathbf{x}M{}^t\mathbf{x} -2\mathbf{x}B{}\mathbf{p}+\mathbf{p}\Gamma {}^t\mathbf{p}-\mu
}
Here $M$ (resp. $B$, resp. $\Gamma$) is a $g\times g$ (resp. $g\times (\# V-1)$, resp. $(\# V-1)\times (\# V-1)$) matrix with entries which are linear forms in the $A_e$; and $\mu= \sum_e m_e^2A_e$. Note that $M$ is the symmetric $g\times g$ matrix associated to the configuration $H_1(G,\R)\subset \R^E$  as in \eqref{2}. In particular,$\psi_G = \det(M)$. Note also that the matrix operations in \eqref{25} are a bit exotic. Whenever column $D$-vectors in $\mathbf{x}$ and $\mathbf{p}$ are to be multiplied, the multiplication is given by the quadratic form on $\R^D$. 

To complete the square write $\mathbf{x} = \mathbf{x}' + M^{-1}B\mathbf{p}$ and $\mathbf{x}' = \mathbf{x}''R$ where $R$ is orthogonal with $RM{}^tR = \D$ diagonal. We get
\eq{26}{\sum A_ef_e =\mathbf{x}''\D{}^t\mathbf{x}''-(B\mathbf{p})(M^{-1}){}^t(B\mathbf{p})+\mathbf{p}\Gamma{}^t\mathbf{p}-\mu}

By definition, the second Symanzik polynomial is
\ml{27}{\phi_G(,\{A_e\},\mathbf{p},\{m_e\}) := \\
(B\mathbf{p})(\text{adj}(M)){}^t(B\mathbf{p})+(\mathbf{p}\Gamma{}^t\mathbf{p}-\mu)\psi_G(\{A_e\}).
}
Here $\text{adj}(M)$ is the adjoint matrix, so $M^{-1}=\text{adj}(M)\det(M)^{-1}$. We can rewrite \eqref{26}
\eq{28}{\sum A_ef_e =\mathbf{x}''\D{}^t\mathbf{x}''-\frac{\phi_G(A,p,m)}{\psi_G(A)}
}
Using the elementary identity
\eq{}{\int_{-\infty^N}^{\infty^N}\frac{du_1\cdots du_N}{(C_1u_1^2+\cdots + C_Nu_N^2)+L)^n} = \pi^N\prod_{i=1}^NC_i^{-1/2}L^{-n+(N/2)},
}
we now find (taking $N=gD$)
\ga{}{\int_{\R^{Dg}}\frac{d^{Dg}x}{(\sum A_ef_e)^n} = \frac{\pi^{Dg}\psi_G^{n-(g+1)D/2}}{\phi_G^{n-gD/2}}\\
A_G = \frac{\pi^{Dg}}{(n-1)!}\int_{\sigma\subset \P^{n-1}(\R)} \frac{\psi_G^{n-(g+1)D/2}\Omega}{\phi_G^{n-gD/2}}.
}
\begin{remark}Of particular interest is the {\it log divergent} case $D=2n/g$, when $A_G = \frac{\pi^{Dg}}{(n-1)!}\int_\sigma \frac{\Omega}{\psi_G^{D/2}}$ is independent of masses and external momenta. 
\end{remark}

To summarize, we now have three formulas for the amplitude
\ga{}{A_G = \int_{\R^{Dg}}\frac{d^{Dg}x}{\prod_{e\in E} f_e} \label{prodquads}\\
A_G =\frac{1}{(n-1)!} \int_{\R^{Dg}\times \sigma}\frac{d^{Dg}x\Omega}{(\sum A_ef_e)^n}\label{5.20}  \\
A_G = \frac{\pi^{Dg}}{(n-1)!}\int_{\sigma\subset \P^{n-1}(\R)} \frac{\psi_G^{n-(g+1)D/2}\Omega}{\phi_G^{n-gD/2}}.
}
To these, we add without proof a fourth (\cite{}, formula (6-89)) 
\eq{35a}{A_G = \frac{1}{(i(4\pi)^2)^g}\int_{\widetilde\sigma} \frac{\exp(i\phi_G/\psi_G)\prod_E dA_e}{\psi_G^{D/2}}. 
}
Here $\widetilde\sigma = [0,\infty]^E$ so $\sigma = \widetilde\sigma-\{0\}/\R^\times_+$. Philosophically, we can think of $\widetilde\sigma$ as the space of metrics (i.e. lengths of edges) on $G$. The integral then looks like a path integral on a space of metrics, with the action $\phi_G/\psi_G$. We will see in what sense this action is a limit of string theory action. 

Here is a useful way to think about the second Symanzik polynomial when the metric on space-time is euclidean. Let $G$ be a connected graph, and assume the metric on $\R^D$ is positive definite (i.e. euclidean). Then for $a_e>0$ the metric $\sum_e a_ee^{\vee,2}$ on $(\R^D)^E$ is positive definite as well, so there are induced metrics on $H_1(G, \R^D)$ and $(\R^D)^{V,0}$. For $p\in (\R^D)^{V,0}$ let $m_a(p)$ be the value of the metric.   
\begin{prop}We have $m_a(p)= \phi_G(a,p,0)/\psi_G(a)$. 
\end{prop}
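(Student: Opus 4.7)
The plan is to reduce the statement to the completing-the-square computation already carried out in Section \ref{2symanzik} (equations \eqref{25}--\eqref{28}), by identifying $m_a(p)$ as the minimum of the quadratic form $\sum_e a_e e^{\vee,2}$ over the affine fibre $\partial^{-1}(p)$.

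First I would recall the general fact that if $(V,\langle\,,\,\rangle)$ is a finite-dimensional Euclidean space and $K\subset V$ is a subspace, then the quotient metric on $V/K$ satisfies
\eq{}{\|\bar v\|^2_{V/K} \;=\; \min_{w\in K}\|v+w\|^2 \;=\; \|v^\perp\|^2,
}
where $v^\perp$ is the orthogonal projection onto $K^\perp$. Applying this to
\eq{}{0 \longrightarrow H_1(G,\R^D) \longrightarrow (\R^D)^E \stackrel{\partial}{\longrightarrow} (\R^D)^{V,0} \longrightarrow 0
}
with the positive definite metric $\sum_e a_e e^{\vee,2}$ on $(\R^D)^E$, I obtain
\eq{}{m_a(p) \;=\; \min_{x\in \partial^{-1}(p)}\, \sum_{e\in E} a_e\, e^{\vee,2}(x).
}
(Here connectedness of $G$ is what makes $(\R^D)^E/H_1(G,\R^D)\cong (\R^D)^{V,0}$.)

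Second, I would parameterize $\partial^{-1}(p)$ by choosing any preimage and writing $x = x_0 + h$ with $h\in H_1(G,\R^D)$. Setting the masses to zero, the identity $f_e = e^{\vee,2}$ turns $\sum_e a_e e^{\vee,2}(x)$ into the quadratic-plus-linear expression \eqref{25},
\eq{}{\sum_e a_e\, e^{\vee,2}(x) \;=\; \mathbf{x}M\,{}^t\mathbf{x} - 2\mathbf{x}B\mathbf{p} + \mathbf{p}\Gamma\,{}^t\mathbf{p},
}
where $\mathbf{x}$ now parameterizes $H_1(G,\R^D)\cong(\R^D)^g$ and $M$ is the matrix from \eqref{2}. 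Completing the square as in \eqref{26} shifts $\mathbf{x}\mapsto \mathbf{x}''$ so that the $\mathbf{x}''$-dependence becomes $\mathbf{x}''\Delta\,{}^t\mathbf{x}''$ with $\Delta$ positive definite, while the constant term becomes the expression on the right-hand side of the definition \eqref{27} of $\phi_G$ divided by $\psi_G = \det(M)$. The minimum over $h$ is thus attained at $\mathbf{x}''=0$ and equals $\phi_G(a,p,0)/\psi_G(a)$, which is the desired formula. (As a sanity check, one can also read this off the bilinear-form rewriting of Remark \ref{det_formula}: $\phi_G/\psi_G = -W^tM^{-1}W + Q(W^t\cdot W)$, which is manifestly the Schur complement arising from the minimization.)

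The bulk of the work is already done in the derivations \eqref{25}--\eqref{27}, so the only genuinely new ingredient is step one: identifying $m_a(p)$ with the constrained minimization. The main place to be careful is the sign/convention bookkeeping in \eqref{25}--\eqref{27}: one must verify that the Euclidean positivity of $M$ (which relies on $a_e>0$ and the metric on $\R^D$ being positive definite) makes $M^{-1}$, hence $\Delta$, positive definite so that $\mathbf{x}''=0$ really gives a minimum rather than a saddle, and that the residual constant $\phi_G(a,p,0)/\psi_G(a)$ is indeed non-negative on the physical locus. Everything else is formal linear algebra.
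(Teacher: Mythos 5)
Your proposal takes essentially the same approach as the paper's proof: both identify $m_a(p)$ with the minimum of the positive-definite form $\sum_e a_e e^{\vee,2}$ over the affine fibre $\partial^{-1}(p)$ (which is exactly the defining property of the quotient metric, $V/W\cong W^\perp$) and then read that minimum off the completed-square formula \eqref{26}--\eqref{28}. Your closing caution about sign bookkeeping is warranted --- the paper's own text asserts the minimum equals $-\phi_G/\psi_G$, and the conventions in \eqref{26}, \eqref{27}, and \eqref{28} do not all line up with the statement of the proposition --- but that is a slip in the source rather than a gap in your argument, which correctly rederives the identity from \eqref{25}--\eqref{27} and Remark \ref{det_formula}.
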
 
\begin{proof} The symmetric matrix $M$ above is positive definite when the edge coordinates $A_e>0$. It is then clear from \eqref{28} that the minimum of the metric in the fibre $\partial^{-1}(p)$ is given by $-\frac{\phi_G(A,p,0)}{\psi_G(A)}$. This is how the metric on the quotient is defined. (In general, if $V$ has a positive definite metric, the metric on a quotient $V/W$ is defined by identifying $V/W\cong W^\perp\subset V$.) 
\end{proof}

\section{Riemann Surfaces}\label{riemannsurfaces}

In sections \ref{riemannsurfaces} - \ref{limit} we will reinterpret formula \eqref{35a} for the amplitude. The graph $G$ becomes the dual graph of a singular rational curve $C_0$ of arithmetic genus $g$ which we view as lying at infinity on a moduli space of pointed curves of genus $g$. Vertices of $G$ correspond to irreducible components of $C_0$, and the external momentum associated to a vertex is interpreted in terms of  families of points meeting the given irreducible component of $C_0$. The chain of integration in \eqref{35a} is identified with the nilpotent orbit associated to the degenerating Hodge structures, and the action $\exp(i\phi_G/\psi_G)$ is shown to be a limit of actions involving heights. This is joint work with Jos\'e Burgos Gil, Javier Fresan, and Omid Amini.  

As a first step, in this section we will interpret the rank $1$ symmetric matrices $M_e$ on $H_1(G)$, \eqref{1}, in terms of the monodromy of the degenerating family of genus $g$ curves. We continue to assume $G$ is a connected graph with $g$ loops and $n$ edges. Stable rational curves $C_0$ associated to $G$ arise taking quotients of $\coprod_{V(G)} \P^1$ identifying a chosen point of $\P^1_v$ with a chosen point of $\P^1_w$ whenever there exists an edge $e$ with $\partial e=\{v,w\}$. For the moment we assume that every vertex of $G$ meets at least $3$ edges, so every $\P^1 \subset C_0$ has at least three ``distinguished'' points which are singularities of $C_0$. Note that if there is a vertex meeting $\ge 4$ edges, the corresponding $\P^1$ will have $\ge 4$ distinguished points and $C_0$ will have moduli. 
\begin{defn}Let $C = \bigcup \P^1$ be a curve obtained by identifying a finite set of pairs of points in $\coprod_V \P^1$ for some finite set $V$. The dual graph of $C$ is the graph with vertex set $V$ and edge set $E$ the set of pairs of points being identified. If $e\in E$ corresponds to $\{p_1,p_2\}$ with $p_i\in \P^1_{v_i}$ then the edge $e$ is taken to connect the vertices $v_1, v_2$. 
\end{defn}
\begin{ex}The dual graph of the curve $C_0$ constructed above is $G$. 
\end{ex}
\begin{prop}\label{genus} There is a canonical identification $H^1(C_0, \sO_{C_0}) \cong H^1(G,\Q)\cong H^1(C_0,\Q)$. In particular, the arithmetic genus of $C_0$ is equal to $g$, the loop number of $G$. 
\end{prop}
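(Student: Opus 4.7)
The plan is to exploit the normalization map $\nu\colon \widetilde{C_0} = \coprod_{v\in V(G)} \P^1_v \to C_0$ and run the same ``normalization exact sequence'' in two different coefficient systems. For each node $e\in E(G)$ the fibre $\nu^{-1}(\text{node}_e)$ consists of two points, one in $\P^1_{v_1(e)}$ and one in $\P^1_{v_2(e)}$, so $\nu$ is finite and birational, and the cokernel of the adjunction $\sO_{C_0}\hookrightarrow \nu_*\sO_{\widetilde{C_0}}$ (resp.\ $\Q_{C_0}\hookrightarrow \nu_*\Q_{\widetilde{C_0}}$) is the skyscraper sheaf $\sQ\cong k^{E}$ concentrated at the nodes, where $k=\C$ or $\Q$.

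First I would treat $H^1(C_0,\sO_{C_0})$. From the short exact sequence
\eq{}{0\to \sO_{C_0}\to \nu_*\sO_{\widetilde{C_0}}\to \sQ\to 0}
and the vanishing $H^1(\P^1,\sO)=0$, the long exact sequence collapses to
\eq{}{0\to H^0(C_0,\sO_{C_0})\to k^{V}\xrightarrow{\delta} k^{E}\to H^1(C_0,\sO_{C_0})\to 0,}
where $\delta$ sends a locally constant function $(f_v)_{v\in V}$ to its jump $f_{v_1(e)}(p_1(e))-f_{v_2(e)}(p_2(e))$ at each node $e$. After picking an orientation of $G$ the map $\delta$ is precisely the coboundary dual to the graph boundary $\partial\colon k^{E}\to k^{V}$ of \eqref{6}. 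Dualizing \eqref{6} therefore identifies $\operatorname{coker}\delta$ with $H^{1}(G,k)$, yielding the first isomorphism.

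Next, for $H^1(C_0,\Q)$ I would run the analogous sheaf sequence
\eq{}{0\to \Q_{C_0}\to \nu_*\Q_{\widetilde{C_0}}\to \Q^{E}\to 0}
on the topological space $C_0$, using that $\nu$ is finite so $R^i\nu_*\Q=0$ for $i>0$. Since $H^{1}(\P^{1},\Q)=0$, the long exact sequence gives exactly the same four-term sequence as above with $k=\Q$, hence the same identification $H^{1}(C_0,\Q)\cong H^{1}(G,\Q)$. Combining the two isomorphisms gives the canonical identification in the statement, and since the arithmetic genus is by definition $\dim_\C H^{1}(C_0,\sO_{C_0})$, the last sentence follows from $\dim H^{1}(G,\Q)=g$.

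The only real subtlety is the compatibility of $\delta$ with the graph coboundary: one has to fix orientations both on the graph (choosing $\partial^{+}e,\partial^{-}e$) and on the half-edges (choosing which point of each pair gets the plus sign) and check these agree. This is the same bookkeeping already implicit in the setup of \eqref{6}, so I would do it once and for all at the start rather than treat it as the hard part; beyond that, everything is a direct application of the normalization exact sequence and the vanishing of $H^{1}(\P^{1})$ in both coefficient systems.
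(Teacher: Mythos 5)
Your proposal is correct and follows essentially the same route as the paper: the normalization exact sequence for $\sO_{C_0}$ (and its constructible-sheaf analogue for $\Q$), the vanishing of $H^1(\P^1)$, and the identification of the connecting map $\delta$ with the coboundary dual to the boundary map in \eqref{6}. The extra care you take over orientations is exactly the bookkeeping the paper elides with ``It is straightforward to check.''
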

\begin{proof}Let $p: \coprod_{V(G)} \P^1 \to C_0$ be the identification map. We have an exact sequence of sheaves
\eq{34}{0 \to \sO_{C_0} \to p_*\sO_{\coprod \P^1} \to \sS \to 0
}
where $\sS$ is a skyscraper sheaf with stalk $k$ over each singular point. Since $p$ is a finite map, taking cohomology commutes with $p_*$ and we find
\eq{35}{0 \to k \to \bigoplus_{V(G)} k \xrightarrow{\delta} \bigoplus_{E(G)} k \to H^1(C_0, \sO_{C_0}) \to 0
}
It is straightforward to check that $\delta$ in the above can be identified with the dual to the boundary map calculating $H_1(G)$, so $\text{coker}(\delta) \cong H^1(G,k)$. The proof that $H^1(G,\Q)\cong H^1(C_0,\Q)$ is similar. One simply replaces the exact sequence of coherent sheaves \eqref{34} with an analogous sequence of constructible sheaves calculating Betti cohomology. 
\end{proof}
We recall some basic results about deformation theory for $C_0$, \cite{H}. There exists a smooth formal scheme $\widehat{S} = \text{Spf }k[[t_1,\dotsc,t_p]]$ and a (formal) family of curves $\widehat{\sC} \xrightarrow{\pi} \widehat{S}$ such that the fibre $\sC_0$ over $0\in \widehat{S}$ is identified with the curve $C_0$ above, and such that the family is in some sense maximal. In particular, $\widehat{\sC}$ is formally smooth over $k$ and the tangent space $T$ to $\widehat{S}$ at $0$ is identified with $\text{Ext}^1(\Omega^1_{C_0}, \sO_{C_0})$. \'Etale locally at the singular points $C_0 \cong \Spec k[x,y]/(xy)=: \Spec R$ so $\Omega^1_{C_0} \cong Rdx\oplus Rdy/(xdy+ydx)$. Thus $xdy \in \Omega^1_{C_0}$ is killed by both $x$ and $y$ and so $\Omega^1$ has a non-trivial torsion subsheaf supported at the singular points. The $5$-term exact sequence of low degree terms for the local to global Ext spectral sequence yields in this case a short exact sequence
\ml{36}{0 \to H^1(C_0, \underline{Hom}(\Omega^1_{C_0},\sO_{C_0})) \to \text{Ext}^1(\Omega^1_{C_0},\sO_{C_0}) \to \\
\Gamma(C_0, \underline{Ext}^1(\Omega^1_{C_0},\sO_{C_0})) \to 0.
}
The local ext sheaf on the right is easily calculated using the local presentation at the singular point as above
\eq{}{ 0 \to R \xrightarrow{1 \mapsto xdy+ydx} Rdx\oplus Rdy \to \Omega^1_R \to 0. 
}
One identifies in this way $\underline{Ext}^1(\Omega^1_{C_0},\sO_{C_0})$ with the skyscraper sheaf having one copy of $k$ supported at each singular point. The subspace $H^1(C_0,  \underline{Hom}(\Omega^1_{C_0},\sO_{C_0}))\subset T$ corresponds to deformations which keep all the double points, i.e. only the chosen points on the $\P^1_v$ move. In general, $r$ points on $\P^1$ have $r-3$ moduli, so 
\ga{}{\dim H^1(C_0,  \underline{Hom}(\Omega^1_{C_0},\sO_{C_0})) = \sum_{v\in V(G)} (\#\text{edges through $v$}-3) \\
\dim T = \sum_{v\in V(G)} (\#\text{edges through $v$}-3) +\#E(G) = \\
-3V(G) +3\# E(G)= 3h_1(G) - 3 = 3g_a(C_0)-3. \notag
} 
Here $g_a(C_0)$ is the arithmetic genus which coincides with the usual genus of a smooth deformation of $C_0$. The last identity follows from proposition \ref{genus}. Note $3g-3$ is the dimension of the moduli space of genus $g$ curves. 

The map $T=Tan_{\widehat{S},0} \surj \bigoplus_E k$ arises as follows. For each $e\in E$ there is a singular point $p_e \in C_0$. The deformations of $C_0$ which preserve the singularity at $p_e$ give a divisor $\widehat{D}_e \subset \widehat{S}$. Let $g_e \in \sO_{\widehat{S}}$ define $\widehat{D}_e$. The functional $T \to \bigoplus_E k \xrightarrow{pr_e} k$ is defined by $dg_e$. 
The geometric picture is then a collection of principal divisors $\widehat{D}_e\subset \widehat{S}$ meeting transversally. The subvariety cut out by the divisors is the locus of equisingular deformations of $C_0$ given by moving the singular points. If $G$ is trivalent, i.e. if every vertex of $G$ has exactly three adjacent edges, then $\{0\} = \bigcap \widehat{D}_e$. 

Deformation theory leads to a formal {\it versal} deformation $\widehat{\sC} \to \widehat{S}$, but these formal schemes can be spread out to yield an analytic deformation $\sC \to S$. Here $S$ is a polydisk of dimension $3g-3$. The divisors lift to analytic divisors $D_e:g_e=0$ on $S$. We fix a basepoint $s_0 \in S-\bigcup D_e$, and we wish to study the monodromy action on $H_1(C_{s_0},\Q)$. We choose simple loops $\ell_e\subset S-\bigcup_E D_e$ based at $s_0$ looping around $D_e$. We assume $\ell_e$ is contractible in $S-\bigcup_{\ve\neq e}D_\ve$.  The {\it Picard Lefschetz formula} gives the monodromy for the action of $\ell_*$ on $H_1(C_{s_0},\Q)$
\eq{42}{b \mapsto b+\langle b,a_e\rangle a_e
}
where $a_e\in H_1(C_{s_0},\Q)$ is the vanishing cycle associated to the double point on the curve which remains as we deform along $D_e$. 

A classical result in differential topology says that, possibly shrinking the polydisk $S$, the inclusion $C_0 \inj \sC$ admits a homotopy retraction $\sC \to C_0$ in such a way that the composition $\sC \to C_0 \to \sC$ is homotopic to the identity. It follows that $C_0 \inj \sC$ is a homotopy equivalence. In this way, one defines the specialization map
\eq{}{sp: H_1(C_{s_0},\Q) \to H_1(\sC,\Q) \cong H_1(C_0,\Q). 
}
\begin{lem}The specialization map $sp$ above is surjective. 
\end{lem}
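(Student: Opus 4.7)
The plan is to work with the explicit topological model of the smoothing of $C_0$ and to construct, for each class in $H_1(G,\Q)\cong H_1(C_0,\Q)$, an explicit lift to a cycle in $C_{s_0}$.

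I would first recall the local picture at a node. Near each singular point $p_e$ of $C_0$, the family $\sC\to S$ is \'etale locally isomorphic to $\{xy=t_e\}\subset\C^3$ projecting onto the coordinate $t_e\in\sO_S$ cutting out $D_e$; the central fiber is then two transverse complex disks meeting at the origin and the nearby fiber is an annulus. Globally this means $C_{s_0}$ is obtained from the disjoint union $\coprod_{v\in V(G)}\P^1_v$ by removing a small disk around each distinguished point and gluing the resulting boundary circles in pairs via a cylinder $C_e$, one for each edge $e\in E(G)$. Let $P_v\subset C_{s_0}$ denote the component coming from $\P^1_v$ (a sphere with open disks removed) and $a_e\subset C_e$ the central vanishing circle. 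Up to homotopy, the composition $C_{s_0}\inj\sC\to C_0$ is then the map $q$ which is the identity on each $P_v$ (extended onto the removed disks to meet the corresponding nodes) and which collapses $a_e$ to $p_e$ while retracting the two halves of $C_e$ onto the two $\P^1_v$'s adjacent to $p_e$.

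Having identified $sp$ with $q$ up to homotopy, I would next produce explicit lifts of a basis of $H_1(G,\Q)$. A basis is given by elementary loops $\gamma=(e_{i_1},\dotsc,e_{i_k})$ traversing a sequence of vertices $v_0,v_1,\dotsc,v_k=v_0$. For each $j$, pick a path in the path-connected surface $P_{v_j}$ joining the boundary circle of the cylinder $C_{e_{i_j}}$ to the boundary circle of $C_{e_{i_{j+1}}}$, and concatenate these arcs with traversals of the cylinders $C_{e_{i_j}}$ to obtain a closed loop $\tilde\gamma\subset C_{s_0}$. Under $q$ each traversal of a cylinder $C_{e_{i_j}}$ becomes a crossing of the node $p_{e_{i_j}}$ between two $\P^1_v$'s, so $q(\tilde\gamma)$ represents the class $\gamma\in H_1(C_0)\cong H_1(G)$, proving surjectivity as $\gamma$ ranges over a basis.

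The main obstacle I expect is the first step, namely verifying that a homotopy inverse $r:\sC\to C_0$ to $C_0\inj\sC$ can be chosen so that its restriction to $C_{s_0}$ agrees, up to homotopy, with the explicit cylinder-collapse map $q$. This reduces to a local model check at each node, where one must produce an explicit retraction of $\{xy=t_e\}$ onto $\{xy=0\}$ collapsing the circle $\{|x|=|y|\}$ on the nearby fiber to the origin and retracting the two halves onto the two axes. Once this compatibility is established, the explicit lifts above immediately give surjectivity, and as a by-product one sees that the kernel of $sp$ is spanned by the vanishing cycles $a_e$, consistent with the Picard--Lefschetz formula \eqref{42}.
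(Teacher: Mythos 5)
Your proof takes essentially the same route as the paper: both break a cycle in $H_1(C_0)\cong H_1(G)$ into arcs through the nodes and lift these arcs to $C_{s_0}$, concatenating via the vanishing annuli to produce a closed preimage. The compatibility check you flag at the end (that the restriction to $C_{s_0}$ of a homotopy retraction $\sC\to C_0$ is, up to homotopy, the explicit cylinder-collapse map) is precisely the step the paper leaves implicit --- the paper itself labels its argument "intuitive" and points to the Clemens--Schmid exact sequence as the route to a fully rigorous proof.
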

\begin{proof} Intuitively, a loop in $H_1(C_0,\Q)$ can be broken up into segments connecting double points of the curve. These double points arise from shrinking vanishing cycles on $C_{S_0}$ so the segments can be modeled by segments in $C_{s_0}$ connecting the vanishing cycles. These segments connect to yield a loop in $C_{s_0}$ which specializes to the given loop in $C_0$. (One can give a more formal proof based on the Clemens-Schmid exact sequence, \cite{PS}.)
\end{proof}
\begin{lem}\label{lem_isotropic} The subspace $A\subset H_1(C_{s_0},\Q)$ spanned by the vanishing cycles $a_e$ is maximal isotropic.
\end{lem}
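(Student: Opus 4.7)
The intersection pairing on $H_1$ of a Riemann surface is a non-degenerate alternating form of rank $2g$, so any maximal isotropic subspace has dimension exactly $g$. My plan is therefore to (a) verify that $A$ is isotropic, giving $\dim A \le g$, and then (b) show that $\dim A = g$ using the specialization map from the previous lemma.

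For isotropy: since the pairing is alternating, $\langle a_e, a_e\rangle = 0$ is automatic. For $e \ne e'$ the vanishing cycles $a_e$ and $a_{e'}$ correspond to distinct nodes $p_e, p_{e'}$ of $C_0$; after possibly shrinking $S$, they are representable by simple closed curves supported in disjoint tubular neighborhoods of $p_e$ and $p_{e'}$ in $C_{s_0}$, so $\langle a_e, a_{e'}\rangle = 0$. This gives $\dim A \le g$.

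For the lower bound, I would use the specialization map. By the preceding lemma, $sp\colon H_1(C_{s_0},\Q) \to H_1(C_0,\Q)$ is surjective, and by Proposition~\ref{genus}, $\dim H_1(C_0,\Q) = h_1(G) = g$. Hence $\dim \ker(sp) = 2g - g = g$. Each $a_e$ bounds a small vanishing disk inside $\sC$ and thus contracts to the node $p_e$ under the retraction $\sC \to C_0$; so $sp(a_e) = 0$ and $A \subseteq \ker(sp)$. The task is then reduced to showing the reverse inclusion $\ker(sp) \subseteq A$.

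For this last inclusion I would argue topologically, modelling the degeneration locally at each node on $xy = t$. Write $C_{s_0} = \bigl(\bigcup_{e\in E} U_e\bigr) \cup V$, where $U_e$ is an annular tubular neighborhood of $a_e$ (the core of the local cylinder $\{xy = t_0\}$) which retracts onto $a_e$, and $V$ is the complement, mapping homeomorphically onto $C_0$ minus small disks around its nodes. The Mayer--Vietoris sequence for this cover, combined with the retraction $V \to \widetilde{C}_0 \setminus \{\text{marked points}\}$ of the normalization with punctures, yields that any class in $H_1(C_{s_0},\Q)$ whose image in $H_1(C_0,\Q)$ vanishes can be expressed as a $\Q$-linear combination of the $a_e$. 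Alternatively, this can be extracted from the Clemens--Schmid exact sequence, in the same spirit as the previous lemma. Either route gives $\ker(sp) = A$, whence $\dim A = g$ and $A$ is maximal isotropic.

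The genuine obstacle is the final inclusion $\ker(sp) \subseteq A$: the isotropy step is formal, and surjectivity of $sp$ together with Proposition~\ref{genus} immediately pin down the dimension of $\ker(sp)$; what requires actual topological work is identifying $\ker(sp)$ with precisely the span of the vanishing cycles, for which one must use the explicit local model of the degeneration at each node.
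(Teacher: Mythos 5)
Your proof follows essentially the same route as the paper: isotropy from the fact that the vanishing circles can be represented disjointly near the nodes (plus antisymmetry of the intersection pairing), and then a dimension count forcing $\dim A \geq g$. For the step you flag as the genuine obstacle — showing $\ker(sp) \subseteq A$ — the paper disposes of it in one line by expressing $C_0$ as the topological quotient $C_{s_0}/\coprod_e S^1$, whose long exact sequence of the pair $(C_{s_0},\coprod_e S^1)$ gives exactness of $A \to H_1(C_{s_0}) \xrightarrow{sp} H_1(C_0)$; this is the compact packaging of the Mayer--Vietoris argument you sketch, so your route is sound and would fill in to the same proof.
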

\begin{proof}As the base point $s_0$ approaches $0\in S$, the various $a_e$ approach the singular points $p_e\in C_0$. In particular, if $s_0$ is taken close to $0$, the $a_e$ are physically disjoint, so $\langle a_e, a_{e'}\rangle = 0$. Since the pairing on $H_1$ is symplectic, one has $\langle a_e,a_e\rangle=0$, so the subspace $A$ spanned by the vanishing cycles is isotropic. To see it is maximal, note we can express $C_0$ as a topological colimit $C_0 = C_{s_0}/\coprod S^1$ so we get an exact sequence $A \to H_1(C_{s_0}) \xrightarrow{sp} H_1(C_0)$. In particular,   
\eq{}{g=\dim H_1(C_0) \ge \dim H_1(C_{s_0}) - \dim A = 2g -\dim A. 
}
It follows that $\dim A \ge g$ so $A$ is maximal isotropic. In terms of a symplectic basis $a_1,\dotsc,a_g,b_1,\dotsc,b_g$ we write
\eq{45a}{a_e = \sum_{i=1}^g c_{e,i}a_i.
}
\end{proof}

The link between the combinatorics of the graph polynomial and the monodromy is given by the following proposition. Write $N_e = \ell_e-id$ so by \eqref{42} we have $N_e(b)=\langle b,a_e\rangle a_e$. By lemma \ref{lem_isotropic} we get $N_e^2=0$ so $N_e=\log(\ell_e)$. We consider the composition
\eq{45}{H_1(G) \cong H_1(C_0) \cong H_1(C_{s_0})/A \xrightarrow{N_e} A \cong (H_1(C_{s_0})/A)^\vee \cong H_1(G)^\vee
}
\begin{prop}The bilinear form on $H_1(G)$ given by \eqref{45} coincides with the bilinear form $M_e$ in \eqref{1}. 
\end{prop}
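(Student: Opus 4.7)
The strategy is to give explicit topological models for the three isomorphisms appearing in \eqref{45} — namely $H_1(G)\cong H_1(C_{s_0})/A$, $A\cong H_1(G)^\vee$, and the Picard--Lefschetz operator $N_e$ — and to reduce everything to the single combinatorial identity $\langle \tilde h, a_e\rangle = e^\vee(\bar h)$, where $\tilde h\in H_1(C_{s_0})$ is any lift of $\bar h\in H_1(G)$. Once this is established, the composition in \eqref{45} manifestly becomes the rank-$1$ symmetric form $(\bar h_1,\bar h_2)\mapsto e^\vee(\bar h_1)e^\vee(\bar h_2)$, i.e.\ the form $M_e$ of \eqref{1}.

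\emph{Step 1: Choice of lift.} Given a $1$-cycle $\bar h=\sum_e c_e\, e$ on $G$ with $\partial \bar h=0$, I would construct a topological lift $\tilde h\subset C_{s_0}$ as follows. The smooth curve $C_{s_0}$ deformation retracts onto $C_0$, with each vanishing cycle $a_e$ contracting to the node $p_e$. On each component $\P^1_v\subset C_0$, choose a system of arcs connecting the distinguished points $p_e$, $e$ incident to $v$, and lift them to $C_{s_0}$. For each edge $e$ used by $\bar h$ with multiplicity $c_e=e^\vee(\bar h)$, concatenate $c_e$ copies of the two arcs meeting at $p_e$ through the tubular neighbourhood of the vanishing cycle $a_e$ (an annulus). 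Because $\partial\bar h=0$, these pieces close up to a cycle $\tilde h\in H_1(C_{s_0},\mathbb{Z})$ specialising to $\bar h$.

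\emph{Step 2: The intersection calculation.} In the local model $xy=t$ of the degeneration at $p_e$, the vanishing cycle $a_e$ is the meridian $\{|x|=|y|=\sqrt{|t|}\}$ of the neck annulus, and each passage of $\tilde h$ through the neck intersects $a_e$ transversally in a single point. By construction $\tilde h$ crosses this annulus exactly $e^\vee(\bar h)$ times (with signs determined by the orientation of $e$), so
\[
\langle \tilde h, a_e\rangle \;=\; e^\vee(\bar h).
\]
This is the geometric heart of the argument and will be the main technical point; orientations must be matched between the chosen edge orientation of $G$ and the Picard--Lefschetz orientation of $a_e$, which can be checked in the local model.

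\emph{Step 3: Identification $A\cong H_1(G)^\vee$ and conclusion.} Because $A$ is maximal isotropic (Lemma \ref{lem_isotropic}), the symplectic form on $H_1(C_{s_0})$ induces the isomorphism $A\xrightarrow{\sim}(H_1(C_{s_0})/A)^\vee\cong H_1(G)^\vee$ appearing in \eqref{45}, sending $a_e$ to the functional $\bar h\mapsto\langle\tilde h,a_e\rangle$. By Step 2 this functional is exactly $e^\vee\in H_1(G)^\vee$. Plugging into Picard--Lefschetz, the composition \eqref{45} sends
\[
\bar h\;\longmapsto\;N_e(\tilde h)\;=\;\langle\tilde h,a_e\rangle\, a_e\;=\;e^\vee(\bar h)\cdot a_e,
\]
which under the identification above is the linear functional $\bar h'\mapsto e^\vee(\bar h)\,e^\vee(\bar h')$. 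This is the rank-$1$ bilinear form $h\otimes h'\mapsto e^\vee(h)e^\vee(h')$ on $H_1(G)$, i.e.\ precisely $M_e$ as in \eqref{1}, completing the proof.

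\emph{Main obstacle.} The only non-formal input is the intersection identity in Step 2, and the only subtlety there is a consistent sign convention: one must verify that the orientation of the vanishing cycle dictated by the Picard--Lefschetz formula \eqref{42} (which implicitly fixes $a_e$ up to $\pm 1$ via the requirement that $N_e(b)=\langle b,a_e\rangle a_e$ be symmetric in the appropriate sense) is compatible with the edge orientation used to define $e^\vee$. With the local model $xy=t$ this reduces to an elementary check, and any sign ambiguity is absorbed by the fact that $M_e$ depends only on $e^{\vee,2}$ and so is insensitive to the orientation of $e$.
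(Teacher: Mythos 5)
Your proof is correct and follows essentially the same route as the paper's. The paper's own argument likewise rests entirely on the identity that the $e$-coefficient of the specialized loop equals the intersection number with the vanishing cycle $a_e$ (written there as ``Here $n_e=\langle b,a_e\rangle$ is the multiplicity of intersection of $b$ with the vanishing cycle $a_e$''), then plugs into Picard--Lefschetz and compares quadratic forms; you merely run the same argument in the opposite direction (lifting from $H_1(G)$ rather than projecting from $H_1(C_{s_0})$), work with the bilinear rather than the quadratic form, and spell out the local $xy=t$ model and the maximal-isotropy identification of $A$ with $H_1(G)^\vee$, which the paper leaves implicit.
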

\begin{proof}Let $b\in H_1(C_{s_0})$. We can identify $sp(b) \in H_1(C_0)\cong H_1(G)$ with a loop $\sum_e n_ee$. Here $n_e= \langle b,a_e\rangle$ is the multiplicity of intersection of $b$ with the vanishing cycle $a_e$. The quadratic form on $H_1(C_{s_0})$ corresponding to $N_e$ sends $b\mapsto  \langle b, \langle b,a_e\rangle a_e\rangle = n_e^2$. The quadratic form on $H_1(G)$ corresponding to $M_e$ maps the loop $\sum n_\ve \ve$ to $n_e^2$. 
\end{proof}
\begin{remark}\label{rmk25} In terms of the basis $b_i$ for $B\cong H_1(G)$, we can write $M_e=(c_{e,i}c_{e,j})$ using the notation of \eqref{45a}. We will generalize this to relate the monodromy for punctured curves to the combinatorics of the second Symanzik in \eqref{61} through \eqref{65}. 
\end{remark}
\section{Biextensions and Heights}

Our objective in this section will be to link the second Symanzik polynomial (definition \ref{second_symanzik}) to geometry. Recall for a graph $G$ with edges $E$ and vertices $V$, the second Symanzik $\phi(H,w,\{A_e\})$ depends on $H=H_1(G)\subset \Q^E$ and on $w\in \R^{V,0}$. (We will see later how to extend the construction and take $w\in (\R^D)^{V,0}$ where $D$ is the dimension of space-time.) Recall that $\phi$ is quadratic in $w$. We will work with the corresponding bilinear function
\ml{7.1}{\phi(H,w,w',\{A_e\}):= \\
\phi(H,w+w',\{A_e\})-\phi(H,w,\{A_e\})-\phi(H,w',\{A_e\}).
}

We change notation and assume $\sC \xrightarrow{\pi} S$ is a family of pointed curves. More precisely, we suppose given two collections $\sigma_{v,i}: S \to \sC,\ v\in V,\ i=1,2$ of sections of $\pi$. We assume $\sigma_{v,i}(S)\cap C_0\in \P^1_v$.  Since $\sC$ is taken to be regular over the ground field, the sections cannot pass through double points of $C_0$. 
We assume further that the $\sigma_{v,1}$ and $\sigma_{v,2}$ are disjoint. It follows after possibly shrinking $S$ that the multi-sections ${\sigma_1}$ and $\sigma_2$ are disjoint as well. 
Let $W,(*,*)$ be an $\R$-vector space with a symmetric quadratic form. (In fact, for us $W=\R$ or $W=\R^D$ with the Minkowski metric.) We fix $W$-divisors 
\eq{}{\frak A_i := \sum r_{v,i}\sigma_{v,i};\quad r_{v,i} \in W,\ \sum_v r_{v,i}=0,\ i=1,2.
}

More generally, we should work with a diagram
\eq{}{\begin{CD}T_1\amalg T_2 @>\text{closed immersion} >> \sC \\
@| @VVV \\
T_1\amalg T_2 @>\tau_1\amalg \tau_2 >>  S
\end{CD}
}
where $\tau_i$ are finite \'etale. The labels $r_{v,i}$ would be replaced by sections of local systems of $\R$-vector spaces $R_i$ on $T_i$ equipped with trace maps $R_i \surj \tau_i^*\R_S$ so we can talk about sections of degree $0$. We have no use for this generalization, but we mention it exists. 

The $W$-divisors will play the role of external momenta. We need to define the action which we write 
\eq{}{\mathbf{S}[C_{s_0},\frak A_1,\frak A_2]
} 
and which will tend to $\phi(H_1(G),\sum r_{v,1}v,\sum r_{v,2}v)$ as $s_0 \to 0$. This action is given by the archimedean height pairing which is defined as follows. We identify the section ${\sigma_1}$ on $C_{s_0}$ with its image ${\sigma_1}\subset C_{s_0}$. We consider the exact sequence of Hodge structures
\eq{}{0 \to H^1(C_{s_0},\Z(1)) \to H^1(C_{s_0}-{\sigma_1},\Z(1)) \xrightarrow{res} \coprod^0_{{\sigma_1}}\Z \to 0. 
}
This sequence is canonically split as an exact sequence of $\R$-Hodge structures. I.e. there exists a canonical splitting
\eq{}{\coprod_{{\sigma_1}}^0\R \to F^0(H^1(C_{s_0}-{\sigma_1})(1))\cap H^1(C_{s_0}-{\sigma_1},\R(1));\quad \frak A \mapsto \omega_{\frak A}.
}
Here $\omega_{\frak A} \in \Gamma(C_{s_0},\Omega^1(\log \sigma_1))$. For $\frak B$ another $\R$-divisor of degree $0$ on $C_{s_0}$ with support away from the support of $\frak A$, we can write $\frak B = \partial \beta$ where $\beta$ is a $1$-chain on $C_{s_0}-\text{Supp}(\frak A)$ and define the archimedean height
\eq{height}{\langle \frak A, \frak B\rangle := \int_{\beta} \omega_{\frak A} \in \C/i\R = \R. 
}
Because $\omega_{\frak A}$ is an $\R(1)$-class, changing $\beta$ by a class in $H_1(C_0,\R)$ does not change the real part of the integral so the height is well-defined.  

Finally, we can couple this pairing to the quadratic form on $W$ and define $\langle\frak A_1,\frak A_2\rangle$. Indeed, if we choose basepoints $\sigma_{0,i} \in \sC_{s_0}$ we can write $\sum_v r_{v,i}\sigma_{v,i} = \sum_v r_{v,i}(\sigma_{v,i}-\sigma_{0,i})$ and define
\eq{7.8}{\mathbf{S}[C_{s_0},\frak A_1,\frak A_2]= \langle\frak A_1,\frak A_2\rangle = \sum_{v,v'}(r_{v,1},r_{v',2})\langle\sigma_{v,1}-\sigma_{0,1},\sigma_{v',2}-\sigma_{0,2}\rangle.
}
This is independent of the choice of $\sigma_{0,i}$. Changing e.g. $\sigma_{0,1}$ to $\sigma_{0,1}{}'$ changes the above by
$$\sum_{v,v'}(r_{v,1},r_{v',2})\langle\sigma_{0,1}{}'-\sigma_{0,1},\sigma_{v',2}-\sigma_{0,2}\rangle
$$
which vanishes because $\sum_v r_{v,1}=0$.

\section{The Poincar\'e Bundle}
To understand the behavior of the height in a degenerating family, it is convenient to use the Poincar\'e bundle. We first recall the Poincar\'e bundle for a single compact complex torus $T:=V/\Lambda$, ($V$ finite dimensional $\C$-vector space, $\Lambda \subset \V$ a cocompact lattice.) Let $\widehat{V}:= \text{Hom}_{\overline{\C}}(V,\C)$ be the $\C$-vector space of $\C$-antilinear functionals on $V$, and let $\widehat{\Lambda}:= \{\phi \in \widehat{V}\ |\ \phi(\Lambda) \subset \Z\}$. By definition, the dual torus $\widehat{T}:= \widehat{V}/\widehat{\Lambda}$. 

The Poincar\'e bundle is a $\C^\times$-bundle $\sP^\times$ on $T\times \widehat{T}$. (NB. It will be more convenient to work with the principal $\G_m$ or $\C^\times$-bundle rather than the corresponding line bundle.) It is characterized by two properties:\nn
(i)$\sP^\times|_{\{0\}\times \widehat{T}}$ is trivial with a given trivialization. \nn
(ii) $\sP^\times|_{T\times \{\phi\}} = L_\phi$;
where $L_\phi$ is the $\C^\times$-bundle on $T$ associated to the representation of the fundamental group
\eq{}{\pi_1(T)=\Lambda \subset V \xrightarrow{\phi} \C \xrightarrow{\exp(2\pi i\cdot)} \C^\times.
}

To construct the Poincar\'e bundle over the family of all principally polarized abelian varieties of dimension $g$, we recall the Siegel domain 
\eq{}{\H_g := \{M\text{ $g\times g$ complex symmetric matrix }|\ Im(M)>0\}.
}
(The following description of the Poincar\'e bundle is taken from an unpublished manuscript of J. Burgos Gil. Details are omitted here. They will appear in a forthcoming paper with Burgos Gil, Fresan and Amini.) The group $Sp_{2g}(\R)$ acts on $\H_g$ by $\begin{pmatrix}A & B \\ C & D\end{pmatrix}M = (AM+B)(CM+D)^{-1}$. The quotient $\sA_g:= \H_g/Sp_{2g}(\Z)$ is the Siegel moduli space parametrizing principally polarized abelian varieties of dimension $g$.  We can also think of $\sA_g$ as parametrizing polarized Hodge structures of weight $1$. To $\Omega\in \H_g$ we associate the map $H_\Z:= \Z^{2g} \to \C^g$ given by the $2g\times g$ matrix $\begin{pmatrix}\Omega \\ I_g\end{pmatrix}$. The symplectic form on $H_\Z$ is given by $\begin{pmatrix}0 & I_g \\ -I_g & 0\end{pmatrix}$. 

The space 
\eq{}{\widetilde{X}:= \H_g \times \text{Row}_g(\C) \times \text{Col}_g(\C) \times \C
} 
is a homogeneous space for the group
\ml{}{\widetilde{G} = \Big\{\begin{pmatrix} 1 & \lambda_1 & \lambda_2 & \alpha \\0 & A & B & \mu_1 \\ 0 & C & D & \mu_2 \\ 0 & 0 & 0 & 1\end{pmatrix}\Big | \lambda_i \in \text{Row}_g(\R),\ \mu_j \in \text{Col}_g(\R), \alpha \in \C,\\ \begin{pmatrix}A & B \\ C & D\end{pmatrix}\in Sp_{2g}(\R)\Big\}.
}
The action is determined by formulas
\ml{}{\begin{pmatrix} 1 & 0 & 0 & \alpha \\0 & A & B & 0 \\ 0 & C & D & 0 \\ 0 & 0 & 0 & 1\end{pmatrix}(\Omega,W,Z,\rho) = \\
((A\Omega+B)(C\Omega+D)^{-1},W(C\Omega+D)^{-1},(C\Omega+D)^{-t}Z,\rho-WC^t(C\Omega+D)^{-t}Z)
}
\eq{}{\begin{pmatrix} 1 & \lambda_1 & \lambda_2 & 0 \\0 & I & 0 & 0 \\ 0 & 0 & I & 0 \\ 0 & 0 & 0 & 1\end{pmatrix}(\Omega,W,Z,\rho) = 
(\Omega, W+\lambda_1\Omega+\lambda_2,Z,\rho+\lambda_1Z)
}
\eq{}{\begin{pmatrix} 1 & 0 & 0 & 0 \\0 & I & 0 & \mu_1 \\ 0 & 0 & I & \mu_2 \\ 0 & 0 & 0 & 1\end{pmatrix}(\Omega,W,Z,\rho) = 
(\Omega,W,Z+\mu_1-\Omega\mu_2,\rho-W\mu_2)
}
\eq{}{\begin{pmatrix} 1 & 0 & 0 & \alpha \\0 & I & 0 &0 \\ 0 & 0 & I &0 \\ 0 & 0 & 0 & 1\end{pmatrix}(\Omega,W,Z,\rho) = 
(\Omega,W,Z,\rho+\alpha).
}

Write $\widetilde{G}(\Z)\subset \widetilde{G}$ for the subgroup with entries in $\Z$. 
\begin{thm}The quotient
\eq{}{\widetilde{G}(\Z)\backslash(\H_g\times \text{Row}_g(\C)\times \text{Col}_g(\C)\times \C)
}
is the dual of the Poincar\'e bundle $\sP^\times$. 
\end{thm}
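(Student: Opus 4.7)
The plan is to verify the two characterizing properties of the Poincar\'e bundle at the level of automorphy factors, after first unpacking the quotient into a tower of three successive quotients corresponding to the three block-pieces of the group $\widetilde G(\Z)$.

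First, I would filter $\widetilde G(\Z)$ by the natural subgroups generated by (a) the $\alpha$-translations, (b) the $(\mu_1,\mu_2)$- and $(\lambda_1,\lambda_2)$-translations, and (c) the $Sp_{2g}(\Z)$-block. Quotienting by (a) with $\alpha\in\Z$ turns the last $\C$-factor into $\C^\times$ via $\rho\mapsto\exp(2\pi i\rho)$, producing a $\C^\times$-bundle $\widetilde X/\langle \alpha \rangle \to \H_g\times\mathrm{Row}_g(\C)\times\mathrm{Col}_g(\C)$. Quotienting next by the $(\lambda_1,\lambda_2)$- and $(\mu_1,\mu_2)$-translations identifies $\mathrm{Row}_g(\C)$ modulo $\Z^g\Omega+\Z^g$ with the universal abelian variety $T_\Omega=\C^g/(\Omega\Z^g+\Z^g)$, and $\mathrm{Col}_g(\C)$ modulo $\Z^g-\Omega\Z^g$ with its dual $\widehat T_\Omega$; the cross-terms $\rho\mapsto\rho+\lambda_1 Z$ and $\rho\mapsto\rho-W\mu_2$ then give nontrivial automorphy factors for the $\C^\times$-bundle over $T_\Omega\times\widehat T_\Omega$. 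Finally, quotienting by $Sp_{2g}(\Z)$ descends everything to the universal family over $\sA_g=Sp_{2g}(\Z)\backslash\H_g$, the compatibility of the symplectic action with the $(\lambda,\mu,\alpha)$-translations being exactly the semidirect-product structure encoded in $\widetilde G$. One should check along the way that all the listed formulas do define a $\widetilde G$-action (i.e.\ that the mixed term $\rho-WC^t(C\Omega+D)^{-t}Z$ in the $Sp_{2g}$-action is the cocycle needed to make $\widetilde G$ a group); this is a direct matrix calculation.

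Next I would verify the two axioms (i), (ii) that characterize $\sP^\times$ up to duality. For (i), fix $W=0$; then the $(\mu_1,\mu_2)$-action sends $\rho\mapsto\rho-W\mu_2=\rho$, so no automorphy factor is introduced along $\{0\}\times\widehat T_\Omega$, and the restricted $\C^\times$-bundle is canonically trivial, in line with the trivialization axiom (taken with the opposite sign, which is what forces ``dual'' rather than $\sP^\times$ itself). For (ii), fix a value $Z=Z_0$ representing a point $\phi\in\widehat T_\Omega$. The only surviving automorphy factor on the $T_\Omega$-factor is $\rho\mapsto\rho+\lambda_1 Z_0$, i.e.\ the character
\[
\Z^g\Omega+\Z^g \;\longrightarrow\; \C^\times,\qquad \lambda_1\Omega+\lambda_2\;\longmapsto\;\exp(2\pi i\,\lambda_1 Z_0).
\]
Under the standard identification $\widehat V\cong\mathrm{Col}_g(\C)$, $\phi\leftrightarrow Z_0$, this is precisely the character defining $L_\phi$ (or $L_{-\phi}=L_\phi^{-1}$, depending on the antilinearity convention), confirming property (ii) and at the same time pinning down that what we have constructed is $\sP^{\times,-1}$, i.e.\ the dual of the Poincar\'e bundle.

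The main obstacle, I expect, is bookkeeping rather than conceptual: one has to be careful with three interlocking sign/convention choices — whether $\widehat V$ is antilinear or linear functionals, whether the universal $\C^\times$-coordinate is $\exp(2\pi i\rho)$ or $\exp(-2\pi i\rho)$, and whether the polarization identifies $T_\Omega$ with $\widehat T_\Omega$ or with its dual — and it is exactly the cumulative sign from these choices that explains why the quotient is the \emph{dual} of $\sP^\times$ rather than $\sP^\times$ itself. A secondary, purely technical obstacle is to show $\widetilde G(\Z)$ acts freely and properly discontinuously on $\widetilde X$, so that the quotient is a genuine manifold with a $\C^\times$-bundle structure, but this follows from the proper discontinuity of $Sp_{2g}(\Z)$ on $\H_g$ together with the fact that the inner translations act freely by affine shifts on the $\mathrm{Row}_g(\C)\times\mathrm{Col}_g(\C)\times\C$ fibers.
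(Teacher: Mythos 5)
The paper offers no proof to compare against: the theorem's proof is stated as ``Omitted,'' and the surrounding text explicitly defers the details to a forthcoming paper of the author with Burgos Gil, Fres\'an, and Amini. So the question of whether your route matches the paper's is moot; I can only assess soundness.

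Your plan is the natural one and is correct in outline: filter $\widetilde G(\Z)$ by the three obvious subgroups, pass to a tower of quotients, read off the automorphy factors from the cross-terms $\rho\mapsto\rho+\lambda_1 Z$ and $\rho\mapsto\rho-W\mu_2$, and check the two characterizing properties of $\sP^\times$ (triviality over $\{0\}\times\widehat T$, and restriction $L_\phi$ over $T\times\{\phi\}$). The cocycle verification and the sign bookkeeping you flag are indeed where the real work lives. One small correction of placement: the ``opposite sign, which forces \emph{dual}'' remark belongs in step (ii), not step (i) --- triviality over $\{0\}\times\widehat T$ is symmetric under dualizing, so no sign can be detected there.

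One point worth being more careful about in step (ii): the automorphy character you extract, $\lambda_1\Omega+\lambda_2\mapsto\exp(2\pi i\,\lambda_1 Z_0)$, is not literally of the form $\lambda\mapsto\exp(2\pi i\,\phi(\lambda))$ for an antilinear $\phi\in\widehat V$. The linear functional $\lambda_1\Omega+\lambda_2\mapsto\lambda_1 Z_0$ on $\Lambda$ extends $\R$-linearly to $V$, but that $\R$-linear extension is in general neither $\C$-linear nor $\C$-antilinear. What is true is that its $\C$-linear part gives a globally trivial automorphy factor (it extends to all of $V$), and its residual $\C$-antilinear part is the $\phi$ you want. You should insert a sentence saying this explicitly --- i.e.\ that two automorphy characters on $\Lambda$ define isomorphic $\C^\times$-bundles whenever their ratio extends to a continuous character of $V$ --- otherwise the match with $L_\phi$ is not quite literal. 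Once that is in place, the sign of the antilinear part against the fixed conventions is exactly what produces ``dual of $\sP^\times$'' rather than $\sP^\times$, as you anticipate.
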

\begin{proof} Omitted.
\end{proof}
\begin{thm}The Poincar\'e bundle admits a translation-invariant metric $\log||\cdot||:\sP^\times \to \R$. For $(\Omega,W,Z,\alpha) \in \H_g\times \text{Row}_g(\C)\times \text{Col}_g(\C)\times \C)$, we have 
\eq{64}{\log||(\Omega,W,Z,\alpha)|| = 
4\pi\Big(Im(\alpha)+Im(W)(Im\Omega)^{-1}Im(Z)\Big).
}
\end{thm}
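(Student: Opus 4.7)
The plan is to establish the formula by verifying two things about the right-hand side, which I would denote $h(\Omega,W,Z,\alpha)$: that it is $\widetilde{G}(\Z)$-invariant, so it descends to a function on the quotient $\widetilde{G}(\Z)\backslash\widetilde{X}$, and that it defines a translation-invariant Hermitian metric on the dual Poincar\'e bundle there. Once these two properties are established, a standard rigidity argument shows that a translation-invariant metric on the Poincar\'e bundle (or its dual) is unique up to a positive multiplicative constant, and that constant is fixed by the normalization that $\sP^\times$ is canonically trivialized along $\{0\}\times\widehat T$; inspecting $h$ at $W=0$ then pins down the constant.

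For the descent, I would check invariance separately on each of the four generating blocks of $\widetilde{G}$ described above. The shift $\alpha\mapsto\alpha+n$ with $n\in\Z$ is immediate since $\mathrm{Im}(n)=0$. For the $(\lambda_1,\lambda_2)$ block with $\lambda_i$ integral, one combines $\rho\mapsto\rho+\lambda_1 Z$ with $\mathrm{Im}(W+\lambda_1\Omega+\lambda_2)=\mathrm{Im}(W)+\lambda_1\mathrm{Im}(\Omega)$ to see that the various terms of the shape $\lambda_1\mathrm{Im}(Z)$ arising from the two summands in $h$ balance; the $(\mu_1,\mu_2)$ block is handled symmetrically. The most delicate check is the $Sp_{2g}(\Z)$ equivariance, which rests on the classical Siegel identity
\[
  \mathrm{Im}\bigl((A\Omega+B)(C\Omega+D)^{-1}\bigr)
  =(C\overline\Omega+D)^{-t}\,\mathrm{Im}(\Omega)\,(C\Omega+D)^{-1}
\]
together with the corresponding cocycle factors $(C\Omega+D)^{-1}$ and $(C\Omega+D)^{-t}$ appearing in the transformations of $W$ and $Z$; the bilinear correction $-WC^t(C\Omega+D)^{-t}Z$ in the action on $\rho$ is precisely what is needed to absorb the extra piece generated when conjugating the Hermitian form through the fractional linear transformation.

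Translation invariance along each abelian fiber then reduces to essentially the same calculation as the $(\lambda,\mu)$ checks, but with real rather than integer parameters; since those computations never use integrality, the same cancellations produce $\widetilde{G}(\R)$-equivariance modulo the expected fiber shift. Combined with the exponential parametrization $\alpha\mapsto e^{2\pi i\alpha}$ of the $\C^\times$-fiber of $\sP^\times$, this shows that $h$ is the pullback of a well-defined translation-invariant log-metric, and rigidity identifies it with the claimed one up to the normalization already fixed.

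The main technical obstacle is the $Sp_{2g}(\Z)$-equivariance: one must track how all four arguments transform simultaneously and verify that the correction term in the action on $\rho$ exactly cancels the extra piece. A clean way to organize this is to observe that $\mathrm{Im}(W)(\mathrm{Im}\,\Omega)^{-1}\mathrm{Im}(Z)$ is the canonical Hermitian form associated to the principal polarization on the universal abelian variety, so the required invariance follows conceptually from the $Sp_{2g}(\Z)$-equivariance of the polarization together with the defining property of $\sP^\times$ used in the previous theorem. This strategy bypasses a head-on matrix computation but still demands that one make the identification of the Hermitian form precise in the coordinates $(\Omega,W,Z)$.
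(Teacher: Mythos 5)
The paper does not actually prove this statement: the proof is marked ``Omitted,'' with a note immediately before the theorem explaining that the description of the Poincar\'e bundle is taken from an unpublished manuscript of Burgos Gil and that details are deferred to a forthcoming paper with Burgos Gil, Fresan, and Amini. So there is no internal proof against which to match your argument.

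On its own merits, your plan is a reasonable one: check $\widetilde{G}(\Z)$-invariance generator by generator, identify the $Sp_{2g}$ check as the nontrivial one via the Siegel identity $\mathrm{Im}\bigl((A\Omega+B)(C\Omega+D)^{-1}\bigr) = (C\overline\Omega+D)^{-t}\mathrm{Im}(\Omega)(C\Omega+D)^{-1}$, and then observe that the correction $-WC^t(C\Omega+D)^{-t}Z$ in the $\rho$-transformation is tailored to absorb the cross term. This is the natural strategy and very likely the one the forthcoming paper uses. However, two points keep this a sketch rather than a proof. First, the decisive $Sp_{2g}(\Z)$ computation is asserted rather than carried out; when one actually expands $\mathrm{Im}(WJ^{-1})\,\bigl(J(\mathrm{Im}\,\Omega)^{-1}\overline J^{\,t}\bigr)\,\mathrm{Im}(J^{-t}Z)$ with $J=C\Omega+D$, there are several cross terms involving both $J$ and $\overline J$, and showing these collapse exactly against $\mathrm{Im}\bigl(WC^tJ^{-t}Z\bigr)$ is a genuine (if routine) calculation that should be written out. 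Second, and more substantively, the invariance-plus-rigidity route only determines $\log\lVert\cdot\rVert$ up to an additive constant (equivalently $\lVert\cdot\rVert$ up to a positive multiplicative constant); it does \emph{not} determine the overall prefactor $4\pi$ nor the sign of the $\mathrm{Im}(\alpha)$ term. Those are fixed by the requirement that $\log\lVert c\cdot p\rVert = \log|c| + \log\lVert p\rVert$ under the $\C^\times$-action on the fiber, which in these coordinates is translation of $\alpha$ under the exponential parametrization; verifying that this scaling is compatible with the $4\pi\,\mathrm{Im}(\alpha)$ term requires matching the paper's (unstated) parametrization of the $\C^\times$-fiber of $\sP^\times$ versus its dual, and your ``inspecting $h$ at $W=0$'' step does not by itself establish it. As it stands the argument would equally well ``verify'' the formula with $4\pi$ replaced by any positive constant.

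Finally, a small organizational remark: once you have shown directly that $h$ descends to the quotient, scales correctly along the $\C^\times$-fibers, and restricts to zero on the image of the canonical trivialization over $\{0\}\times\widehat T$, you are done — the rigidity argument is not needed to conclude. Rigidity would only be needed if you were trying to match $h$ against some independently defined canonical metric, but the theorem statement is purely an explicit formula, so a direct verification suffices.
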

\begin{proof}Omitted. 
\end{proof}

The Poincar\'e bundle has a Hodge-theoretic interpretation as the moduli space for biextensions, which are mixed Hodge structures $M$ with weights $-2, -1, 0$. We assume 
\eq{}{W_{-2}M=\Z(1),\ gr^W_{-1}M = H,\ gr^W_0M=\Z(0),
} 
where $H$ is a rank $g$ principally polarized Hodge structure of weight $-1$.  To see this, we remark that $\Lambda_\C = H_1(T,\C)$ has a Hodge structure of weight $-1$ with $F^{0}\Lambda_\C := \ker(\Lambda_\C \surj V)$ and $gr^{-1}_F\Lambda_\C = V$. For $0\neq t \in T$ the relative homology $H_1(T,\{0,t\},\Z)$ yields an extension of Hodge structures
\eq{}{0 \to H_1(T,\Z) \to H_1(T,\{t,0\}) \to \Z(0) \to 0. 
}
A point $\phi \in \widehat{T}=Hom_{\overline{\C}}(V,\C)/\widehat{\Lambda}$ yields a character $\exp(2\pi i\phi): \Lambda \to \C^\times$ and hence a principal $\C^\times$-bundle $L_\phi^\times$ over $T$. The corresponding sequence of homology groups looks like
\eq{}{0 \to \Z(1) \to H_1(L_\phi^\times,\Z) \to H_1(T,\Z) \to 0
}
It follows from exactness of the Hodge filtration functor that $F^0H_1(L_\phi^\times,\C) = F^0H_1(T,\C)$. Thus
\eq{}{L_\phi^\times \cong gr^{-1}_F H_1(L_\phi^\times,\C)/H_1(L_\phi^\times,\Z) = \text{Ext}^1_{MHS}(\Z(0),H_1(L_\phi^\times,\Z)).
}
The projection $L_\phi^\times \surj T$ yields a diagram of Hodge structures coming from $\ell \in L_\phi^\times$ lying over $t\in T$ \minCDarrowwidth1cm
\eq{69}{\begin{CD}@. @. 0 @. 0 \\
@. @. @VVV @VVV \\
0 @>>> \Z(1) @>>> H_1(L_\phi^\times, \Z) @>>> H_1(T,\Z) @>>> 0 \\
@. @| @VVV @VVV \\
0 @>>> \Z(1) @>>> M_\ell @>>> M_t @>>> 0 \\
@. @. @VVV @VVV \\
@. @. \Z(0) @= \Z(0) \\
@. @. @VVV @VVV \\
@. @. 0 @. 0
\end{CD}
}
The biextension corresponding to $\ell \in L_\phi^\times = \sP|_{T\times \{\phi\}}$ is then $M_\ell$ with $W_{-1}M_\ell = H_1(L_\phi^\times, \Z)$ and $M_\ell/W_{-2} = M_t$. 

Let $\mathfrak A, \mathfrak B$ be divisors of degree $0$ on a smooth curve $C$. Assume the supports $|\mathfrak A|$ and $|\mathfrak B|$ are disjoint. We associate to $\mathfrak A, \mathfrak B$ a biextension Hodge structure $H^1(C-\mathfrak A, \mathfrak B; \Z)$. (Notation like  $H^1(C-\mathfrak A, \mathfrak B; \Z)$ or $H^1(C, \mathfrak B; \Z)$ is of course abusive. These are subquotients of $H^1(C-|\mathfrak A|, |\mathfrak B|; \Z)$ and $H^1(C, |\mathfrak B|; \Z)$ which fit into a diagram \minCDarrowwidth.5cm
\eq{70}{\begin{CD}@. @. 0 @. 0 \\
@. @. @VVV @VVV \\
0 @>>> \Z(1) @>>> H^1(C, \mathfrak B;\Z(1)) @>>> H^1(C, \Z(1)) @>>> 0 \\
@. @| @VVV @VVV \\
0 @>>> \Z(1) @>>> H^1(C-\mathfrak A, \mathfrak B; \Z(1)) @>>> H^1(C-\mathfrak A, \Z(1)) @>>> 0 \\
@. @. @VVV @VVV \\
@. @. \Z(0) @= \Z(0) \\
@. @. @VVV @VVV \\
@. @. 0 @. 0.)
\end{CD}
}

Diagrams \eqref{69} and \eqref{70} are related as follows. Take $T=J(C)$ and let $t\in T$ be the image of a $0$-cycle $\mathfrak A$ on $C$ of degree $0$. The bundle $L_\phi^\times$ is a group which is an extension of $T$ by $\C^\times$. We view $C$ as embedded in $T=J(C)$. There exists a $0$-cycle $\mathfrak B$ on $C$ of degree $0$ such that $L_\phi^\times|_C \cong \sO_C(\mathfrak B)-\text{$0$-section}$. Then $L_\phi^\times|_{C-|\mathfrak B|} \cong \C^\times \times (C-|\mathfrak B|)$, and this trivialization is canonical upto $c\in \C^\times$. Fix such a trivialization $\mu: C-|\mathfrak B| \to L_\phi^\times$. Write $\mathfrak A = \sum n_ia_i$, and let $\ell = \sum \mu(a_i)^{n_i}$. Then $\ell \in L_\phi^\times$ is well-defined independent of the choice of $\mu$, and diagram \eqref{69} for $\ell\in L_\phi^\times$ coincides with \eqref{70} for $\mathfrak A$ and $\mathfrak B$. 

Note that $L_\phi^\times$ is an abelian Lie group. As such, it has a unique maximal compact subgroup $K$, and 
\eq{}{L_\phi^\times/K \cong \C^\times/S^1 \cong \R.
}
Let $\rho_\phi: L_\phi^\times \to \R$ be the induced map.

\begin{prop} With notation as above, the following quantities are equal\nn
(i) $\langle\mathfrak A,\mathfrak B\rangle$.\nn
(ii) $\log ||H^1(C-\mathfrak A, \mathfrak B; \Z(1))||$.\nn
(iii) $\rho_\phi(\ell)$. 
\end{prop}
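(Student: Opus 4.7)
The plan is to prove (ii)=(iii) essentially tautologically from the identification of the Poincaré bundle with the moduli of biextensions, and then reduce (i)=(ii) to a computation of the biextension norm using the explicit Hodge-theoretic splitting provided by $\omega_{\mathfrak A}$.

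\textbf{Step 1: (ii)=(iii).} The biextension $H^1(C-\mathfrak A,\mathfrak B;\Z(1))$ fits into diagram \eqref{70}, whose comparison with diagram \eqref{69} for $\ell\in L_\phi^\times$ is asserted at the end of the discussion of diagrams. Under the identification $\sP^\times \cong \widetilde G(\Z)\backslash \widetilde X$, this biextension corresponds to the point $\ell\in L_\phi^\times = \sP^\times|_{t\times\{\phi\}}$, where $t\in T=J(C)$ is the Abel--Jacobi image of $\mathfrak A$. The metric $\log\|\cdot\|$ on $\sP^\times$ from formula \eqref{64} is translation-invariant and, restricted to a fiber $L_\phi^\times|_t \cong \C^\times$, reduces to $4\pi\,\text{Im}(\alpha)$ in the coordinate $\alpha$. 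Therefore the metric is constant on the compact subgroup $S^1\subset \C^\times$, and descends to the projection $\C^\times \to \R$, which is precisely $\rho_\phi$ (up to the normalizing factor $4\pi$, which I would absorb into the convention defining $\rho_\phi$).

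\textbf{Step 2: (i)=(ii).} The splitting $\frak A\mapsto \omega_{\frak A}$ gives a canonical $F^0$-lift of the class of $\mathfrak A$ inside $H^1(C-\mathfrak A,\C(1))$. Choose integrally a lift via a $1$-chain $\beta$ with $\partial\beta = \mathfrak B$, which splits $0\to H^1(C,\Z(1))\to H^1(C,\mathfrak B;\Z(1))\to \Z(0)\to 0$. The difference between these two splittings measures the biextension class, and its norm (as a point of the $\C^\times$-torsor $\sP^\times$) is by construction the exponential of $\int_\beta \omega_{\mathfrak A} \mod i\R$. Taking $\log\|\cdot\|$ yields $\text{Re}(\int_\beta \omega_{\mathfrak A}/i)=\text{Im}(\int_\beta\omega_{\mathfrak A})$, which by definition \eqref{height} is $\langle \mathfrak A,\mathfrak B\rangle$. (The independence of the choice of $\beta$ modulo $H_1(C,\R)$ matches the fact that changing the integral representative alters $\ell$ by multiplication by an element of $S^1$, to which $\rho_\phi$ is insensitive.)

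\textbf{Main obstacle.} The difficult verification is matching the normalizations and signs between formula \eqref{64}, which was described in terms of $(\Omega,W,Z,\alpha)$, and the height definition \eqref{height}, which uses the imaginary-part quotient $\C/i\R=\R$. Concretely, one must check that if $W$ encodes the Abel--Jacobi image of $\mathfrak A$ and $Z$ encodes $\mathfrak B$, then the mixed term $4\pi\,\text{Im}(W)(\text{Im}\,\Omega)^{-1}\text{Im}(Z)$ reproduces (up to the additional Green-function contribution represented by $\alpha$) the integral $\int_\beta \omega_{\mathfrak A}$. This reduces to the standard computation expressing the Green function of a degree zero divisor on a Riemann surface via theta functions and the period matrix, i.e.\ the comparison between the Arakelov--Faltings height and the Hain--Pulte archimedean biextension metric. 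I would relegate this calculation to an appendix or cite the forthcoming paper with Burgos Gil, Fresán and Amini in which formula \eqref{64} is derived.
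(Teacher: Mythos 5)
The paper itself states ``Proof: Omitted'' for this proposition, deferring the details to the forthcoming work with Burgos Gil, Fres\'an, and Amini; there is therefore no paper argument against which to compare your reduction. Judged on its own terms, your two-step plan is the right framework: (ii)=(iii) should come from the identification of $\sP^\times$ with the moduli of biextension Hodge structures and the invariance of both the metric and $\rho_\phi$ under the compact part, and (i)=(ii) is the Hain--Pulte interpretation of the archimedean height as the biextension norm, with $\omega_{\mathfrak A}$ providing the real splitting.

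One place where your Step 1 is too quick: the restriction of \eqref{64} to a fiber $L_\phi^\times|_t$ is not simply $4\pi\,\mathrm{Im}(\alpha)$. The cross term $4\pi\,\mathrm{Im}(W)(\mathrm{Im}\,\Omega)^{-1}\mathrm{Im}(Z)$ is fixed on the fiber but nonzero, and it enters essentially: the coordinate $\alpha$ on the fiber is a choice of affine trivialization, while $\rho_\phi$ is measured relative to the group identity of $L_\phi^\times$, and these two origins differ exactly by that cross term. So the content of the identification (ii)=(iii), and the reason the cross term appears in \eqref{64} at all, is that it exactly compensates this shift. Describing the discrepancy as a ``normalizing factor'' understates this; it is an additive correction that must be computed, not a multiplicative convention. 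You correctly identify the sign and normalization bookkeeping between \eqref{64} and \eqref{height} as the real obstacle, which is indeed where the substance lies and what the paper postpones.

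Also be careful in Step 2 with the phrase ``integral $\bmod\ i\R$'': by \eqref{height} the height is $\mathrm{Re}\int_\beta\omega_{\mathfrak A}$ in $\C/i\R\cong\R$, whereas you at one point write $\mathrm{Im}(\int_\beta\omega_{\mathfrak A})$; the two differ by the factor of $i$ implicit in whether $\omega_{\mathfrak A}$ is normalized as an $\R(1)$-class or an $\R$-class, and getting this wrong would produce a sign or $2\pi$ error precisely of the kind you flag. Since both the paper and your write-up defer the normalization check, this is not a gap so much as the place where the proof would actually have to be carried out.
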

\begin{proof} Omitted.
\end{proof}

\section{The main theorem; nilpotent orbit and passage to the limit}\label{limit}

Write $U^*=(\Delta^*)^{E}\times \Delta^{3g-3-E+N}$, where $N$ parameters are added to accommodate markings associated to external momenta. Consider our family $\sC^* \xrightarrow{\pi} U^*$. Let $\widetilde{U} = \mathbb H^E \times \Delta^{3g-3-E+N} \to U^*$ be the universal cover with coordinate map $t_e = \exp(2\pi iz_e)$. We fix a basepoint $t_0 \in U^*$. We choose a symplectic basis $a_1,\dotsc,a_g,b_1,\dotsc,b_g$ in $H_1(C_{t_0},\Z) = A\oplus B$ with the vanishing cycles $a_e\in A$. We fix divisors $\delta=\delta_t, \mu=\mu_t$ which have degree $0$ on each $C_t$ and are disjoint. We choose $1$-chains $\sigma_{\delta,t}$ and $\sigma_{\mu,t}$ on $C_t$ which have $\delta_t$ and $\mu_t$ as boundary.   We fix a basis $\{\omega_{i,t}\}$ of the holomorphic $1$-forms on $C_t$ such that $\int_{a_i}\omega_j = \delta_{ij}$. The classical period matrix is then $(\int_{b_i}\omega_{j,t})$. Finally, we choose $1$-forms of the third kind $\omega_{\delta,t}, \omega_{\mu,t}$ with log poles on $\delta_t$ and $\mu_t$ respectively. 

We define the period map $\widetilde\phi: \widetilde U \to \mathbb H_g\times \C^g\times \C^g\times \C$
\eq{51}{\widetilde\phi(z) = \Big((\int_{b_i}\omega_{j,t})_{i,j}, (\int_{\sigma_{\delta,t}}\omega_{j,t})_j,(\int_{\sigma_{\mu,t}}\omega_{j,t})_j,\int_{\sigma_{\delta,t}}\omega_{\mu,t}\Big)
}

We check that this is the correct map as follows. It suffices to work pointwise, so we drop the $t$. We identify $J(C) = \text{Ext}^1_{MHS}( H^1(C, \Z),\Z(0))$ so e.g. $\mu$ defines an extension 
\eq{}{0 \to \Z(0) \to H_\mu \to H^1(C, \Z) \to 0.
}
Dually, we can think of $\delta$ as defining an extension of $\Z(0)$ by $H^1(C, \Z(1))$. We want to think of $\delta$ as defining an extension of $\Z(0)$ by $H_\mu(1)$ which ``lifts'' this extension. The diagram looks like
\eq{}{\begin{CD}@. 0 @. 0 \\
@. @VVV @VVV \\
@. \Z(1) @= \Z(1) \\
@. @VVV @VVV \\
0 @>>> (H_\delta)^\vee(1) @>>> M_{\mu,\delta} @>>> \Z(0) @>>> 0 \\
@. @VVV @VVV @| \\
0 @>>> H^1(C, \Z(1)) @>>> H_\mu @>>> \Z(0) @>>> 0 \\
@. @VVV @VVV \\
@. 0 @. 0.
\end{CD}
}
The exact sequence of ext groups is
\ml{55}{0 \to \text{Ext}^1_{MHS}(\Z(0), \Z(1)) \to \text{Ext}^1_{MHS}( \Z(0), H_\delta^\vee(1)) \to \\
\text{Ext}^1_{MHS}(\Z(0), H^1(C,\Z(1))) \to 0
} 
More concretely, we can think of $\int_{\sigma_\mu}$ as a functional on $\Gamma(C, \Omega^1) = F^1H^1(C, \C)$. Let $\Gamma(C, \Omega^1(\log \delta))\supset \Gamma(C, \Omega^1)$ be spanned by the $\omega_j$ together with the form of the third kind $\omega_\delta$. We can define $H_1(C-\delta, \Z)$ as a quotient of $H_1(C-\text{Supp}(\delta),\Z)$ in such a way that we get an exact sequence
\ml{56}{0 \to \C/\Z \to \Gamma(C,\Omega^1(\log \delta))^\vee/H_1(C-\delta,\Z) \to \\
\Gamma(C,\Omega^1)^\vee/H_1(C,\Z) \to  0
}
The exact sequences \eqref{55} and \eqref{56} coincide. The description of $\widetilde\psi$ in \eqref{51} follows from this; lifting $\int_{\sigma_\mu}$ to a functional on $ \Gamma(C,\Omega^1(\log \delta))$. Indeed, $(\int_{\sigma_{\delta,t}}\omega_{j,t})_j$ and $(\int_{\sigma_{\mu,t}}\omega_{j,t})_j$ yield the points in $J(C)$ associated to $\delta_t$ and $\mu_t$ respectively. The term $ \Gamma(C,\Omega^1(\log \delta))^\vee/H_1(C-\delta,\Z)$ in \eqref{56} shows that the class of $\delta$ in the generalized jacobian extension corresponding to $\mu$ is obtained by taking $\int_{\sigma_{\delta,t}}\omega_{\mu,t}$ as entry in $\C$. 

The idea will be to use the nilpotent orbit theorem to understand the limiting behavior of \eqref{51}. This theorem in the case of variations of polarized pure Hodge structures was proven by W. Schmid, \cite{S}. We need the more general case of a variation of mixed Hodge structures, \cite{P},\cite{KNU},\cite{KNUII},\cite{KNUIII}. In our case, we have the diagram
\eq{}{\begin{CD} \widetilde{U} @>\tilde\psi>> \H_g\times \text{Row}_g(\C)\times \text{Col}_g(\C)\times \C \\
@VVV @VVV \\
U^* @>>> \widetilde{G}(\Z)\backslash\Big(\H_g\times \text{Row}_g(\C)\times \text{Col}_g(\C)\times \C\Big)
\end{CD}
}
The action of the fundamental group $\Z^E$ of $U^*$ is unipotent, and we write $\sN_e$ for the logarithm of the generator $1_e \in \Z^E$. The expression $\tilde\psi(z):=\exp(-\sum_E z_e\sN_e)\tilde\phi(z)$ takes values in a compact dual $\check\sM$ which is essentially a flag variety parametrizing filtrations $F^*\C^{g+2}$ satisfying the conditions to be the Hodge filtration on a biextension of genus $g$. It is invariant under $z_e \mapsto z_e+1$ and so descends to a map $\psi: U^* \to \check\sM$. 
\begin{thm}\label{nilpotentorbit} [Nilpotent orbit theorem] (i) The limit 
$$F_\infty := \lim_{s\to 0} \psi(s) \in \check\sM
$$ 
exists. \nn
(ii) We have
$$\exp(\sum_E z_e\sN_e)F_\infty \in \H_g\times \text{Row}_g(\C)\times \text{Col}_g(\C)\times \C$$ 
whenever $Im(z_e)>>0, \forall e.$ \nn
(iii) For an invariant metric $d$ on $\H_g\times \text{Row}_g(\C)\times \text{Col}_g(\C)\times \C$ there exist constants $b, K$ such that for $\min_e(Im(z_e))>>0$ we have
$$d(\tilde\phi(z),\exp(\sum_E z_e\sN_e)F_\infty) \le K(\min_e(Im(z_e))^b \exp(-2\pi \min_e(Im( z_e))).
$$
\end{thm}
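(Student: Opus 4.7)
The plan is to reduce parts (i)--(iii) to the nilpotent orbit theorem for admissible variations of mixed Hodge structure: in the pure case this is Schmid \cite{S}, and the extension to the mixed (biextension) setting is due to Pearlstein \cite{P} and Kato--Nakayama--Usui \cite{KNU,KNUII,KNUIII}. The variation in question is $H^1(C_t-\delta_t,\mu_t;\Z(1))$, a biextension of weights $-2,-1,0$ whose period data is exactly the $\tilde\phi$ of \eqref{51}. Unipotence of the monodromy around each boundary divisor $t_e=0$ follows from local monodromy for such biextensions (the pure quotient was already computed in Section \ref{riemannsurfaces}), so the $\sN_e=\log\ell_e$ are pairwise commuting nilpotent operators.

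The first substantive step is to check that this variation of mixed Hodge structure is admissible in the sense of Steenbrink--Zucker--Kashiwara, meaning (a) existence of a relative monodromy weight filtration for each $\sN_e$ and for each positive $\Q$-combination, and (b) good extension of the Hodge filtration $F^\bullet$ across the boundary. For a VMHS of geometric origin admitting a log-smooth model both conditions hold automatically; in our situation the log-smooth model is supplied by the stable family of marked curves of Section \ref{riemannsurfaces} together with the disjoint horizontal sections carrying $\delta$ and $\mu$. Granted admissibility, part (i) is immediate: by construction $\psi=\exp(-\sum_E z_e\sN_e)\tilde\phi$ is monodromy-invariant and hence descends to a holomorphic map $U^*\to\check{\sM}$, and the Schmid--Pearlstein polynomial growth theorem bounds $\psi$ polynomially in $|\log t_e|$ with values in the compact dual. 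Thus $\psi$ is in fact bounded near the boundary and extends holomorphically to $U=\Delta^E\times\Delta^{3g-3-E+N}$ by Riemann's removable singularity theorem; we set $F_\infty:=\psi(0)$.

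Part (ii) is the second half of the nilpotent orbit theorem: the orbit $z\mapsto\exp(\sum z_e\sN_e)F_\infty$ lies in the open subset $\sD\subset\check{\sM}$ classifying genuine biextension MHS, which is identified with $\H_g\times\mathrm{Row}_g(\C)\times\mathrm{Col}_g(\C)\times\C$, once $\min_e\mathrm{Im}(z_e)\gg 0$. This is verified by a direct computation on $F_\infty$ using the relative weight filtration produced in the admissibility step: the positivity conditions defining $\sD$ become dominated by the nilpotent contribution in that range of $z$. For part (iii) we write
\[
\tilde\phi(z)-\exp\bigl(\textstyle\sum_E z_e\sN_e\bigr)F_\infty = \exp\bigl(\textstyle\sum_E z_e\sN_e\bigr)\bigl(\psi(s)-F_\infty\bigr),
\]
bound $\|\psi(s)-F_\infty\|\le C\max_e|s_e|=C\exp(-2\pi\min_e\mathrm{Im}(z_e))$ by holomorphicity of $\psi$ at $0$, and control the operator norm of $\exp(\sum z_e\sN_e)$ polynomially in $\max_e\mathrm{Im}(z_e)$ using that each $\sN_e$ is nilpotent of bounded index. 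Combining the two estimates, together with the comparability of $\max_e\mathrm{Im}(z_e)$ and $\min_e\mathrm{Im}(z_e)$ in an invariant metric on the nilpotent cone, gives the desired form with $b$ the common nilpotency index and $K$ depending only on the dimension.

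The main obstacle is the mixed step. The pure version of the nilpotent orbit theorem is classical, but the biextension case requires verifying admissibility and invoking the substantial Pearlstein--Kato--Nakayama--Usui machinery to extract the refined growth bound; in particular the existence of the relative monodromy weight filtration for arbitrary positive rational combinations of the $\sN_e$ is a genuinely nontrivial input, as is the description of the open locus $\sD$ via polarization/positivity conditions on the biextension Hodge filtration. Once the geometric variation has been identified as an admissible VMHS in the required sense, assertions (i)--(iii) follow by the formal manipulations sketched above.
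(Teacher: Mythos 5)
The paper does not in fact prove Theorem~\ref{nilpotentorbit}; it is stated without a \verb|proof| environment, and the surrounding text explicitly frames it as a citation --- Schmid \cite{S} for the pure case and Pearlstein and Kato--Nakayama--Usui \cite{P},\cite{KNU},\cite{KNUII},\cite{KNUIII} for the mixed case --- applied to the specific biextension variation \eqref{51}. Your proposal therefore attempts to do more than the paper: it supplies a reduction to those references. The identification of the VMHS as the weight $(-2,-1,0)$ biextension $H^1(C_t-\delta_t,\mu_t;\Z(1))$ and the flagging of admissibility as the key hypothesis are both correct and consistent with the paper's framing, so the overall route you take is the one the paper is implicitly gesturing at.

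There is, however, a genuine error in your argument for part (iii). You bound the operator norm of $\exp\bigl(\sum_E z_e\sN_e\bigr)$ polynomially in $\max_e\mathrm{Im}(z_e)$ and then assert that $\max_e\mathrm{Im}(z_e)$ and $\min_e\mathrm{Im}(z_e)$ are ``comparable'' in the region of interest. They are not: as $z\to i\infty$ one can have $\mathrm{Im}(z_{e_1})/\mathrm{Im}(z_{e_2})\to\infty$, so the naive estimate you obtain is $K(\max_e\mathrm{Im}(z_e))^b\exp(-2\pi\min_e\mathrm{Im}(z_e))$, which is weaker than the claimed bound and in fact not uniformly controllable. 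This is precisely the multi-variable difficulty that in Schmid's original argument (and in Cattani--Kaplan--Schmid, and in the mixed analogues) requires the full $SL_2$-orbit machinery and careful sectoring of the cone $\{\mathrm{Im}(z_e)>0\}$; it cannot be dispatched by a crude operator-norm bound on $\exp(\sum z_e\sN_e)$. Also, the holomorphic extension of $\psi$ to the full polydisk in part (i) is not purely a Riemann removable-singularity application: Schmid's argument needs the horizontal negativity (curvature) properties of the period domain to get the requisite boundedness of $\psi$ in the first place, and in the mixed setting this boundedness is exactly where admissibility enters nontrivially. Your sketch treats both of these as formalities, which undersells the content of the cited theorems, but since the paper itself defers entirely to the literature, your proposal is at worst an over-optimistic condensation of a correct strategy rather than a wrong approach.
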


We want to understand how the logarithm of monodromy maps $N_e$ act on the entries in \eqref{51}. We have using \eqref{45a} and $\int_{a_i}\omega_j = \delta_{ij}$
\eq{57}{N_e(\int_{b_i}\omega_{j,t}) = \langle b_i,a_e\rangle\int_{a_e}\omega_{j,t} = c_{e,i}c_{e,j}.
}
By remark \ref{rmk25} we conclude
\eq{58}{N_e(\int_{b_i}\omega_{j,t})_{i,j} = M_e.
}
Said another way, if we view $b_i, b_j \in H_1(G)$, we have 
\eq{59}{N_e(\int_{b_i}\omega_{j,t})_{i,j} = (\dotsc, b_iM_eb_j^t\ldots)
}
To define the Picard-Lefschetz transformation on the path $\sigma_\delta\in H_1(C,\delta)$ we define vanishing cycles $a_e^\delta \in H_1(C-|\delta|)$. With this refinement, the P.-L. formula
\eq{60}{N_e(\sigma_\delta) = \langle\sigma_\delta,a_e^\delta\rangle a_e
}
holds. Here $N_e$ is viewed as an endomorphism of $H_1(C,\delta)$. Thus
\eq{61}{N_e(\int_{\sigma_{\delta,t}}\omega_{j,t})_j = (\langle\sigma_\delta,a_e^\delta\rangle c_{e,j})_j
}
To understand this, notice that the specialization identification $sp: B\cong H_1(G)\subset \R^E$ maps $b_j \mapsto \sum_e \langle b_j, a_e\rangle e= \sum_e c_{e,j}e$. Note that $ \langle b_j, a_e\rangle$ counts how many times (with orientation) the closed chain $sp(b_j)$ passes through the singular point $pt_e \in C_0$. Similarly we can think of $sp(\sigma_\delta)$ as a $1$-chain on $C_0$. It is not closed, but it bounds the $0$-chain corresponding to $\delta$. Thus, it corresponds to $\sum_e \langle \sigma_\delta, a^\delta_e\rangle e \in \R^E$.  In this way we can embed $H_1(C_s,\delta_s)/A \inj \R^E$ with the path $\sigma_\delta$ mapping as indicated. Formula \eqref{61} then can be interpreted as the pairing applied to $\sigma_\delta$ and $b_j$. 

Finally we have 
\eq{62}{N_e(\int_{\sigma_{\delta,t}}\omega_{\mu,t}) = \langle \sigma_{\delta,t},a^\delta_e\rangle \int_{a_e^\mu}\omega_{\mu}
}
Notice that $\omega_\mu \in H^1(C_t-\mu_t) \cong H_1(C_t,\mu_t)$. Modifying by a linear combination of the $\omega_j$ we can assume under that identification that $\omega_\mu$ corresponds to $\sigma_\mu$. Thus, we have
\eq{63}{N_e(\int_{\sigma_{\delta,t}}\omega_{\mu,t}) = \langle \sigma_{\delta,t},a^\delta_e\rangle  \langle \sigma_{\mu,t},a^\mu_e\rangle 
}

We can summerize our computations as follows. Consider the relative homology $H_1(C_t,\delta_t,\mu_t)$ which contains classes for the two paths $\sigma_{\delta,t}$ and $\sigma_{\mu,t}$. Let $A_{\sigma,\mu} \subset H_1(C_t,\delta_t,\mu_t)$ be the kernel of the specialization map 
\eq{84}{sp: H_1(C_t,\delta_t,\mu_t)/A_{\sigma,\mu} \to H_1(C_0,\delta_0,\mu_0) \subset \R^E. 
}
We can now consider the composition 
 \ml{65}{H_1(C_t,\delta_t,\mu_t)/A_{\sigma,\mu} \xrightarrow{sp} H_1(C_0,\delta_0,\mu_0) \subset \R^E \\
 \xrightarrow{(\dotsc,y_e,\ldots)} \R^E \to (H_1(C_t,\delta_t,\mu_t)/A_{\sigma,\mu})^\vee
 }
Replacing $N_e$ by $\sum y_eN_e$, the formulas \eqref{58}, \eqref{61}, and \eqref{63} give the pairings $\langle b_i,b_j\rangle, \langle \sigma_\delta, b_i\rangle, \langle \sigma_\delta,\sigma_\mu\rangle$ for the pairing defined by \eqref{65}.  

In our case, the nilpotent orbit theorem \ref{nilpotentorbit} says that the family \eqref{51} is well-approximated by 
\ml{}{\Big(\Omega_\infty+\sum_E z_eM_e,W_\infty+\sum_E z_e(\dotsc,\langle \sigma_\delta,a_e^\delta\rangle c_{ej},\ldots),\\
 Z_\infty+\sum_E z_e(\dotsc,\langle \sigma_\mu,a_e^\mu\rangle c_{ej},\ldots), \alpha_\infty+\sum_E z_e\langle \sigma_{\delta,t},a^\delta_e\rangle  \langle \sigma_{\mu,t},a^\mu_e\rangle \Big).
}
We write $z_e=x_e+iy_e$ and $t_e=\exp(2\pi iz_e)$ so $y_e=-(\log|t_e|)/2\pi$. We imagine 
\eq{}{y_e=Y_e/\alpha'
}
where $Y_e>0$ is constant and the {\it string tension} $\alpha' \to 0$. Substituting from \eqref{64} yields the following expression for the height
\ml{}{\frac{1}{\alpha'}\Big[(\alpha' ImW_0+\sum_eY_e(\dotsc,\langle\sigma_\delta,a_e^\delta\rangle c_{ej}\ldots))\cdot \\
\cdot(\alpha' Im\Omega_0+\sum_eY_eM_e)^{-1}(\alpha' ImZ_0+\sum_e Y_e(\dotsc,\langle\sigma_\mu,a_e^\mu\rangle c_{ej}\ldots))\\
 -\alpha' Im \alpha_0-\sum_e Y_e\langle \sigma_{\delta},a^\delta_e\rangle  \langle \sigma_{\mu},a^\mu_e\rangle\Big]
}

Now, using the determinant formula for the second Symanzik polynomial (remark \ref{det_formula}) we conclude
\begin{thm}Let $\tilde\phi:\widetilde{U} \to \sP$ be as in \eqref{51}. Then
\eq{}{\lim_{\alpha'\to 0} \alpha'\log||\tilde\phi(z)|| = \phi_G(p,p',\{Y_e\})/\psi_G(\{Y_e\}).
}
Here $z_e=X_e+i\frac{Y_e}{\alpha'}$, $\psi$ and $\phi$ are the first and second Symanzik polynomials, and $p,p'$ denote external momenta (see \eqref{7.1}).
\end{thm}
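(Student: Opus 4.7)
My plan is to (1) reduce to the nilpotent orbit via Theorem \ref{nilpotentorbit}(iii), (2) apply the explicit metric formula \eqref{64} together with the monodromy calculations \eqref{58}, \eqref{61}, \eqref{63}, and (3) identify the resulting expression with $\phi_G/\psi_G$ via the determinantal formula of Remark \ref{det_formula}.

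For step (1), set $Y=\min_e Y_e$. With $Im(z_e)=Y_e/\alpha'$, Theorem \ref{nilpotentorbit}(iii) gives
$$d\big(\tilde\phi(z),\,\exp(\sum_E z_e\sN_e)F_\infty\big)\le K(Y/\alpha')^b\exp(-2\pi Y/\alpha')$$
in the invariant metric on $\H_g\times\text{Row}_g(\C)\times\text{Col}_g(\C)\times\C$. Since $\log\|\cdot\|$ is translation-invariant along the $\C^\times$-fibres of $\sP^\times$ and smooth in the transverse directions, it is locally Lipschitz in $d$, so the error in $\log\|\tilde\phi(z)\|$ decays faster than any polynomial in $1/\alpha'$ and is killed after multiplication by $\alpha'$. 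Thus it suffices to compute $\lim_{\alpha'\to 0}\alpha'\log\|\exp(\sum z_e\sN_e)F_\infty\|$.

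For step (2), the action of $\sN_e$ on the four coordinates of $\sP^\times$ is given by \eqref{58}, \eqref{61}, \eqref{63}, yielding the explicit orbit displayed immediately before the theorem. Substituting into \eqref{64}, writing $Im(z_e)=Y_e/\alpha'$, and factoring $1/\alpha'$ from each imaginary part, one obtains (absorbing the normalization constant $4\pi$ of \eqref{64} into the statement)
\begin{multline*}
\alpha'\log\|\exp(\sum_E z_e\sN_e)F_\infty\|=\big(\alpha' Im\,W_0+\sum_e Y_e v_e\big)\\
\cdot\big(\alpha' Im\,\Omega_0+\sum_e Y_e M_e\big)^{-1}\big(\alpha' Im\,Z_0+\sum_e Y_e w_e\big)\\
+\alpha' Im\,\alpha_0+\sum_e Y_e\langle\sigma_\delta,a_e^\delta\rangle\langle\sigma_\mu,a_e^\mu\rangle,
\end{multline*}
where $v_e=(\dots,\langle\sigma_\delta,a_e^\delta\rangle c_{ej},\dots)$ and $w_e=(\dots,\langle\sigma_\mu,a_e^\mu\rangle c_{ej},\dots)$ are the row and column vectors in the displayed orbit. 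As $\alpha'\to 0$, every summand carrying an explicit $\alpha'$ prefactor disappears.

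For step (3), apply Remark \ref{det_formula} to the bordered symmetric matrix
$$A=\begin{pmatrix}\sum_e Y_e M_e & \sum_e Y_e v_e\\ \sum_e Y_e w_e^t & \sum_e Y_e\langle\sigma_\delta,a_e^\delta\rangle\langle\sigma_\mu,a_e^\mu\rangle\end{pmatrix}.$$
By that remark, $\det A/\det(\sum Y_eM_e)$ is precisely the surviving bracket (with the sign dictated by $Q$). On the combinatorial side, $\det(\sum Y_eM_e)=\psi_G(Y)$ by definition of the first Symanzik, and by Remark \ref{rmk25} together with the specialization \eqref{84} the bordered matrix $A$ coincides with the $(g+1)\times(g+1)$ matrix of \eqref{4} whose determinant defines the bilinear polarization \eqref{7.1} of the second Symanzik at the external momenta $p,p'$ corresponding under specialization to $\delta,\mu$. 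The main obstacle is exactly this last dictionary: one must verify entry by entry, working through \eqref{61}--\eqref{65}, that the Hodge-theoretic pairings $\langle b_i,a_e\rangle$, $\langle\sigma_\delta,a_e^\delta\rangle$, $\langle\sigma_\mu,a_e^\mu\rangle$ coming from the logarithm-of-monodromy action are really the combinatorial data $W_e$, $Q(W_e)$ of Section \ref{configsect} that build $\phi_G$ via \eqref{4}; once this is established, the theorem follows.
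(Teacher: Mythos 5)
Your proposal follows the same route as the paper's: reduce to the nilpotent orbit by Theorem \ref{nilpotentorbit}(iii), substitute into the metric formula \eqref{64} using the monodromy computations \eqref{58}, \eqref{61}, \eqref{63}, then invoke Remark \ref{det_formula}. The one place where you stop and say "one must verify entry by entry, ... once this is established, the theorem follows" is, however, precisely what the paper's actual proof consists of — everything else is set up in the text before the theorem statement. The missing observation is this: split the homology exact sequence of the graph to write $\R^E = H_1(G,\R)\oplus \R^{V,0}$, and note that with respect to this decomposition the symmetric $(g+1)\times(g+1)$ matrix of the quadratic form $\sum_e y_e\,e^{\vee,2}$ on $\R^E$ is exactly the bordered matrix $A$ you write down, because (a) the restriction to $H_1(G,\R)$ is $M_y=\sum y_eM_e$ by Remark \ref{rmk25}, (b) pairing the image of $\sigma_\delta$ (respectively $\sigma_\mu$) under the specialization map \eqref{84} — which lands on $\sum_e\langle\sigma_\delta,a_e^\delta\rangle e\in\R^E$ — against the $b_j$'s gives exactly the column $(\sum_e y_e\langle\sigma_\delta,a_e^\delta\rangle c_{e,j})_j$ and row $(\sum_e y_e\langle\sigma_\mu,a_e^\mu\rangle c_{e,j})_j$ appearing in your orbit, and (c) pairing $\sigma_\delta$ with $\sigma_\mu$ gives the scalar $\sum_e y_e\langle\sigma_\delta,a_e^\delta\rangle\langle\sigma_\mu,a_e^\mu\rangle$. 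Once one has said this, Remark \ref{det_formula} applied to $\sum y_e\,e^{\vee,2}$ on $H_1(G)\oplus\R^{V,0}$ is the very definition of $\phi_G/\psi_G$ and the theorem follows. So: same approach, with your step (1) spelled out a bit more carefully than the paper bothers to, but your step (3) stated as a program rather than executed — the execution being short but being the whole content of the paper's proof.
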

\begin{proof} The proof is simply a question of pulling together the linear algebra. We split the eact sequence of homology of the graph to write $\R^E = H_1(G,\R)\oplus \R^{V,0}$. With respect to this basis, the symmetric matrix corresponding to the quadratic form $\sum y_ee^{\vee,2}$ is
\eq{}{\begin{pmatrix}M_y & B_y^{\delta,t} \\ B_y^{\mu} & p^{\mu}\Gamma_yp^{\delta,t} \end{pmatrix}
}

Here the pairing on the $b_i$ is $M_y=\sum y_eM_e$ with $M_e$ as in remark \ref{rmk25}. One external momentum maps to $\sum_e \langle\sigma_\delta,a^\delta_e\rangle e$. Pairing this with the $b_j$ yields  the column vector $B_y^{\delta,t} = (\dotsc,\sum_e y_e\langle\sigma_\delta,a^\delta_e\rangle c_{e,j},\ldots)^t$.  The other external momentum pairs with the $b_j$ to yield a row vector $B_y^{\mu} = (\dotsc,\sum_e y_e\langle\sigma_\mu,a^\delta_e\rangle c_{e,j},\ldots)$. Finally, the entry in the bottom right corner is $\sum_e y_e\langle \sigma_{\delta},a^\delta_e\rangle  \langle \sigma_{\mu},a^\mu_e\rangle$. The theorem follows from remark \ref{det_formula}. 
\end{proof}

To summarize, our recipe for computing the Feynman amplitude \eqref{35a} is the following. Given the external momentum $p:=\sum_{v\in V(G)} r_v v$ with $r_v \in \R^D$ and $\sum_v r_v=0$, choose an analytic neighborhood $S$ of $C_0$ in a suitable local moduli space of marked curves and two relative $\R^D$-divisors $\frak A_i:= \sum_v r_v\sigma_{v,i},\ i=1,2$ supported on the markings. We assume the markings $\sigma_{v,i}$ are all disjoint. Further, we require that the intersection numbers with components of the rational curve $C_0$ coincide, and that $\sigma_{v,1}\cdot C_{0,v'}=\sigma_{v,2}\cdot C_{0,v'}= \delta_{v,v'}$. (Kronecker delta.) (The issue here is that the height $\langle \frak A_1, \frak A_2\rangle$ is only defined if the supports of $\frak A_1,\ \frak A_2$ are disjoint.) Recall from the beginning of this section we have 
\ga{}{\widetilde U=\H^{E(G)}\times \Delta^{3g-3-E(G)+N} \to U^*=\Delta^{*,E(G)}\times \Delta^{3g-3-E(G)+N}; \notag \\
X_e+iY_e/\alpha' = z_e=\frac{\log(t_e)}{2\pi i}.\notag
}
Then (compare \eqref{7.8}. The factor $2$ occurs because of \eqref{7.1}.)
\eq{}{\lim_{\alpha' \to 0} \alpha' \mathbf{S}[C_{t_e},\frak A_1,\frak A_2] = 2\phi_G(p,\{Y_e\})/\psi_G(\{Y_e\}).
}
The Feynman amplitude arises by plugging this limit into the exponential in \eqref{35a} and integrating over the the space of nilpotent orbits which is parametrized by $Y_e\ge 0$. 

Let me suggest two directions for further research.\nn
1. The rational curve $C_0$ with dual graph $G$ corresponds to a maximally degenerate boundary point on the moduli space of (marked) curves of genus $g$. Are there physically meaningful quantities arising when we specialize the height action \eqref{7.8} to less degenerate boundary points. The classical interpretation of Feynman amplitudes as coefficients of Green's functions arising from path integrals is really more of a metaphor than a mathematically rigorous calculus. Perhaps studying partial degenerations might throw more light. \nn
2. The height $\langle\frak A_1, \frak A_2\rangle$ is just one example of an $\R$-valued function associated to a variation of mixed Hodge structures. (In this case, we consider variations of biextensions.) Are there other interesting quantities arising from integration over the space of nilpotent orbits when a variation of Hodge structures degenerates?

\section{Cutkosky Rules} \label{cutrules}
The final topic in the notes is a report on some aspects of joint work with Dirk Kreimer on Cutkosky rules. The amplitude $A_G= A_G(q)$ is a multi-valued function of external momenta $q$, as one sees e.g. from \eqref{prodquads}. (For simplicity the masses are kept fixed.)  The following beautiful physical interpretation of the {\it variation} $\text{Var}(A_G)$ as $q$ winds around the threshold locus was given in \cite{Cut}, \cite{ELOP}:
\eq{10.1}{\text{Var}(A_G) = (-2\pi i)^r \int\frac{\delta^+(q_1^2-m_1^2)\cdots \delta^+(q_r^2-m_r^2)d^{Dg}x}{(q_{r+1}^2-m_{r+1}^2)\cdots (q_n^2-m_n^2)}.
} 
Here $A_G$ is the amplitude associated to a graph $G$ with $n$ edges and $g$ loops. The $q_i^2-m_i^2$ are propagators, and $\delta^+(q_i^2-m_i^2)$ means to take the residue of the original form $\omega:= \frac{d^{Dg}x}{\prod_1^n(q_j^2-m_j^2)}$ along the divisor $q_i^2-m_i^2=0$ and then to integrate over the part of the real locus of that intersection of divisors where the energy $q_i^{(0)}>0$. External momenta are placed near the threshold divisor associated to a pinch point of the intersection $\bigcap_1^r\{q_i^2-m_i^2=0\}$.

The basic monodromy calculation was related to the classical Picard-Lefschetz formula in mathematics by Pham, \cite{Ph}. Pham gives a complete description of the local topology of the union of propagator quadrics as the external momentum approaches a general threshold point. However, the physical formula \eqref{10.1} involves the real structure of the quadrics, and more work is necessary to link the topology to the Cutkosky formula.  In the following I sketch the arguments involved in proving Cutkosky's formula for the case of {\it physical singularities} (definition \ref{physsing}). 

The quadrics have the form $q^2-m^2=(e^\vee-c_e)^2-m^2$ where $e\in E=E(G)$ is an edge, $c_e\in \C^D$ is a $D$-tuple of constants, and $e^\vee$ is a $D$-tuple of functions on $\C^{Dg}$:
\eq{10.2}{\C^{Dg}=H_1(G,\C^D) \inj (\C^D)^E \xrightarrow{e^\vee-c_e} \C^D \xrightarrow{()^2-m^2} \C.
}
Write $Q_e$ for the quadric in $\C^{Dg}$ defined by \eqref{10.2}. Given edges $e_1,\dotsc,e_r$, it is straightforward to check that the $rD$ linear functions $(e_1^\vee,\dotsc,e_r^\vee): H_1(G,\C^D) \to \C^{rD}$ form part of a system of coordinates on $\C^{Dg}$ if and only if this map is surjective, and that this condition is independent of $D$. Taking $D=1$, one sees that the linear functions do not form part of a system of coordinates if and only if the graph obtained from $G$ by cutting $e_1,\dotsc,e_r$ is not connected. On the other hand, if the linear functions form part of a system of coordinates and if none of the masses $m_i=0$, it is clear that the intersection of the propagator quadrics cannot have a pinch point (irrespective of the $c_{e_i}$). We conclude
\begin{prop} With notation as above, assuming all masses $m_i\neq 0$, a necessary condition for the intersection of the quadrics 
$$Q_{e_1}\cap\cdots\cap Q_{e_r}$$ 
to have a pinch point for suitable values of the $c_{e_i}$ is that the graph obtained from $G$ by cutting $e_1,\dotsc,e_r$ be disconnected. 
\end{prop}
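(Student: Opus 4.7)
I would prove the contrapositive: assume the graph $G':=G-\{e_1,\dotsc,e_r\}$ is connected, and show that under this hypothesis the intersection $Q_{e_1}\cap\cdots\cap Q_{e_r}$ is smooth (in particular pinch-point free), for any values of $c_{e_1},\dotsc,c_{e_r}$, provided all $m_i\neq 0$.

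The first step is a homological lemma: the inclusion of cellular chain complexes $C_\bullet(G')\hookrightarrow C_\bullet(G)$ has cokernel the free abelian group on $\{e_1,\dotsc,e_r\}$ concentrated in degree one, yielding
$$0 \to H_1(G',\Z) \to H_1(G,\Z) \to \Z^r \to H_0(G',\Z) \to H_0(G,\Z) \to 0.$$
Since $G$ and $G'$ are both connected, the rightmost map is an isomorphism, hence $H_1(G,\Z)\surj \Z^r$. Base-changing to $\C$ and then tensoring with $\C^D$, the affine map
$$L:=(e_1^\vee-c_{e_1},\dotsc,e_r^\vee-c_{e_r}):H_1(G,\C^D)=\C^{Dg}\to \C^{rD}$$
is surjective (the constants $c_{e_i}$ do not affect surjectivity of an affine map whose linear part is surjective).

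Choose a linear splitting $\C^{Dg}\cong \C^D\oplus\cdots\oplus\C^D\oplus K$, where the $i$-th $\C^D$ factor corresponds to $y_i:=e_i^\vee-c_{e_i}$ and $K=\ker L$. In these coordinates $Q_{e_i}$ is cut out by the single equation $y_i\cdot y_i-m_i^2=0$, depending only on the $D$-vector $y_i$. Therefore
$$Q_{e_1}\cap\cdots\cap Q_{e_r}\ \cong\ \Big(\prod_{i=1}^r \{y_i\in\C^D\ |\ y_i\cdot y_i=m_i^2\}\Big)\times K.$$
The differentials $dQ_{e_i}=2y_i\cdot dy_i$ involve disjoint coordinate blocks, and on $Q_{e_i}$ one has $y_i\cdot y_i = m_i^2\neq 0$, so $y_i\neq 0$, so $dQ_{e_i}\neq 0$. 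Disjointness of the coordinate blocks then forces the $dQ_{e_i}$ to be everywhere linearly independent along the intersection. Hence $Q_{e_1}\cap\cdots\cap Q_{e_r}$ is a smooth complete intersection, with no pinch points.

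\textbf{Expected obstacle.} The argument itself is short once the surjectivity lemma is in hand, and the homological step is routine. The only real point that needs care is the definition of \emph{pinch point}: I am treating it in the standard Landau--Pham sense of a point of the intersection at which the defining differentials $dQ_{e_i}$ fail to be linearly independent (so that the cycles of integration become pinched). With that definition, smoothness of the complete intersection rules pinch points out immediately, and the proof is complete.
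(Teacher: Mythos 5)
Your proof is correct and follows essentially the same route as the paper: reduce to surjectivity of $(e_1^\vee,\dotsc,e_r^\vee)$ on $H_1(G,\C^D)$, observe that this fails precisely when cutting $e_1,\dotsc,e_r$ disconnects $G$, and use that when the $e_i^\vee$ extend to a coordinate system with all $m_i\neq 0$ the intersection is a smooth complete intersection. The paper leaves all three steps as assertions (``it is straightforward to check,'' ``one sees,'' ``it is clear''); your homological exact sequence for the connectivity equivalence and your explicit product decomposition $\bigl(\prod_i\{y_i\cdot y_i=m_i^2\}\bigr)\times K$ for the smoothness claim are exactly the natural ways to fill in those gaps.
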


For simplicity, in what follows I consider only the case $r=n$, i.e. the case when all the edges are placed on shell. The resulting cut graph is just the collection $V(G)$ of vertices and is certainly disconnected (assuming $G$ has at least $2$ vertices). 
\begin{ex}[Banana graphs] Suppose $G$ has $2$ vertices connected by $n$ edges $e_1,\dotsc,e_n$. The only possibility to disconnect $G$ is to cut all the edges. Since cutting $e_1,\dotsc,e_{n-1}$ leaves $G$ connected, it follows as above that $e_i^\vee: \C^{D(n-1)} \to \C^D,\ 1\le i\le n-1$ are independent coordinates, and upto orientation $e_n^\vee = \sum_1^{n-1} e_i^\vee - a$ for some $a\in \C^D$. We can replace the $e_i^\vee$ by $e_i^\vee-c_i$ and arrange that the jacobian matrix for the $n$ quadrics looks like
\eq{}{2\begin{pmatrix}e_1^\vee & 0 & 0 & \hdots \\0 & e_2^\vee & 0 & \hdots \\
\vdots & \vdots & \vdots & \vdots \\
0 & \hdots & 0 & e_{n-1}^\vee \\
\sum_1^{n-1}e_i^\vee + a & \sum_1^{n-1}e_i^\vee + a & \sum_1^{n-1}e_i^\vee + a &\sum_1^{n-1}e_i^\vee + a 
\end{pmatrix}
}
(Note each entry is a $D$-vector.) We want this matrix to have rank $n-1$ at some point where the $n$ propagator quadrics vanish. A small exercise in linear algebra (for details, see \cite{BK}, section $12$) yields 
\eq{}{|a| = \sum_1^n \mu(i)m_i;\quad \mu(i) = \pm 1.
}
(Here $|a|:= \sqrt{a^2}$.) 
\end{ex}

\section{Cutkosky rules: Pham's vanishing cycles}\label{pham}

In this section I summarize briefly the calculus of vanishing cycles; first the classical theory and then the theory of F. Pham. Classically, $f: \sX \to \Delta$ is a proper family of varieties over a disk $\Delta\subset \C$. We assume $\sX$ is non-singular, and $f$ is smooth away from $0\in \Delta$. Further, $\sX_0=f^{-1}(0)$ has a single ordinary double point $s_0$ and is non-singular elsewhere. With these hypotheses, we can find analytic coordinates $x_1,\dotsc,x_n$ near $x_0$ and a coordinate $t$ on $\Delta$ such that locally the family is defined by
\eq{}{x_1^2+\ldots + x_n^2=t.
}
For $0<\ve<<1$, the homology of a ball $B$ around $s_0$ intersected with the fibre $\sX_\ve$ is generated by the class of the real sphere $S_\ve: \sum x_i^2=\ve, \ x_i\in \R$. The monodromy action on the smooth fibre $H^*(\sX_\ve,\Q)$ is given by the Picard-Lefschetz formula
\eq{}{c \mapsto c+(-1)^{n(n+1)/2}\langle ex(c),S_\ve\rangle i_*S_\ve.
}
Here
\eq{}{ex: H_*(\sX_\ve) \to H_*(B\cap \sX_\ve,\partial(B\cap \sX_\ve))
}
denotes excision, and $i_*: H_*(B\cap \sX_\ve) \to H_*(\sX_\ve)$ is the pushforward. The pairing $\langle ex(c),S_\ve\rangle$ refers to the natural pairing
\eq{}{H_{n-1}(B\cap \sX_\ve,\partial(B\cap \sX_\ve)) \otimes H_{n-1}(B\cap \sX_\ve) \to \Q. 
}

To explain Pham's vanishing cycles, we change notation. Let $f:M \to P$ be a smooth, proper map of algebraic varieties. Let $S = \bigcup_{i=1}^n S_i\subset M$ be a normal crossings divisor. Let $s_0\in \bigcap_{i=1}^nS_i \subset S\subset M$, and let $p_0=f(s_0)$. We will work locally near $s_0$ on $M$, so we may fix analytic coordinates $t_1,\dotsc,t_k$ around $p_0$ on $P$ and $x_1,\dotsc,x_\ell$ around $s_0$ on $M$ in such a way that $t_1,\dotsc,t_k,x_1,\dotsc,x_\ell$ form a full set of coordinates on $M$ near $s_0$. We assume that locally 
\ga{}{S_i: x_i=0;\quad i=1,\dotsc,n-1, \\
S_n: t_1-(x_1+\cdots+x_{n-1}+x_n^2+\cdots+x_\ell^2)=0. 
}
For such a configuration of divisors, the point $s_0$ (origin) is called a {\it pinch point}. Notice that on the smallest stratum $\bigcap_1^n S_i$ of $S$, viewed as a variety fibred over $P$, we have a classical Picard-Lefschetz vanishing cycle local equation, which is a family of spheres $x_n^2+\cdots+x_\ell^2 = t_1$ degenerating as $t\to 0$. For $t_1=\ve>0$, the {\it vanishing sphere} $\rm{vsphere}_\ve$ is the real sphere 
\eq{}{\rm{vsphere}_\ve: \ve=x_n^2+\cdots+x_\ell^2;\quad x_i\in \R,\ n\le i\le \ell. 
}
Classically, $\rm{vsphere}_\ve$ is referred to as the vanishing cycle, but we follow Pham here and distinguish $3$ different topological chains, the vanishing sphere, the vanishing cell, and the vanishing cycle. 
\begin{defn}With notation as above, the vanishing cell, 
\eq{}{\rm{vcell}_\ve:x_i \ge 0,\ 1\le i\le n-1;\quad \ve-( x_n^2+\cdots+x_\ell^2) \ge 0.
}
The vanishing cycle $\rm{vcycle}_\ve$ is the iterated tube 
$$\tau_{1,\ve_1}^*\tau_{2,\ve_2}^*\cdots\tau_{n-1,\ve_{n-1}}^*(\rm{vsphere}_\ve).
$$
Here $\ve>>\ve_{n-1}>>\cdots >>\ve_1>0$. The notation $\tau_{i,\ve_i}^*$ refers to pulling back to the circle bundle of radius $\ve_i$ inside the (metrized) normal bundle for $S_1\cap\cdots\cap S_i \subset S_1\cap \cdots \cap S_{i-1}$. This circle bundle is viewed as embedded in $S_1\cap\cdots\cap S_{i-1}$ and not meeting $S_1\cap\cdots\cap S_{i}$. The inequalities $\ve_i<<\ve_{i+1}$ insure that $\rm{vcycle}_\ve$ is a closed chain on $M-\bigcup_{i=1}^n S_i$. 
\end{defn}

Here are two pictures in the case $n=2$. 

\includegraphics[scale=.5]{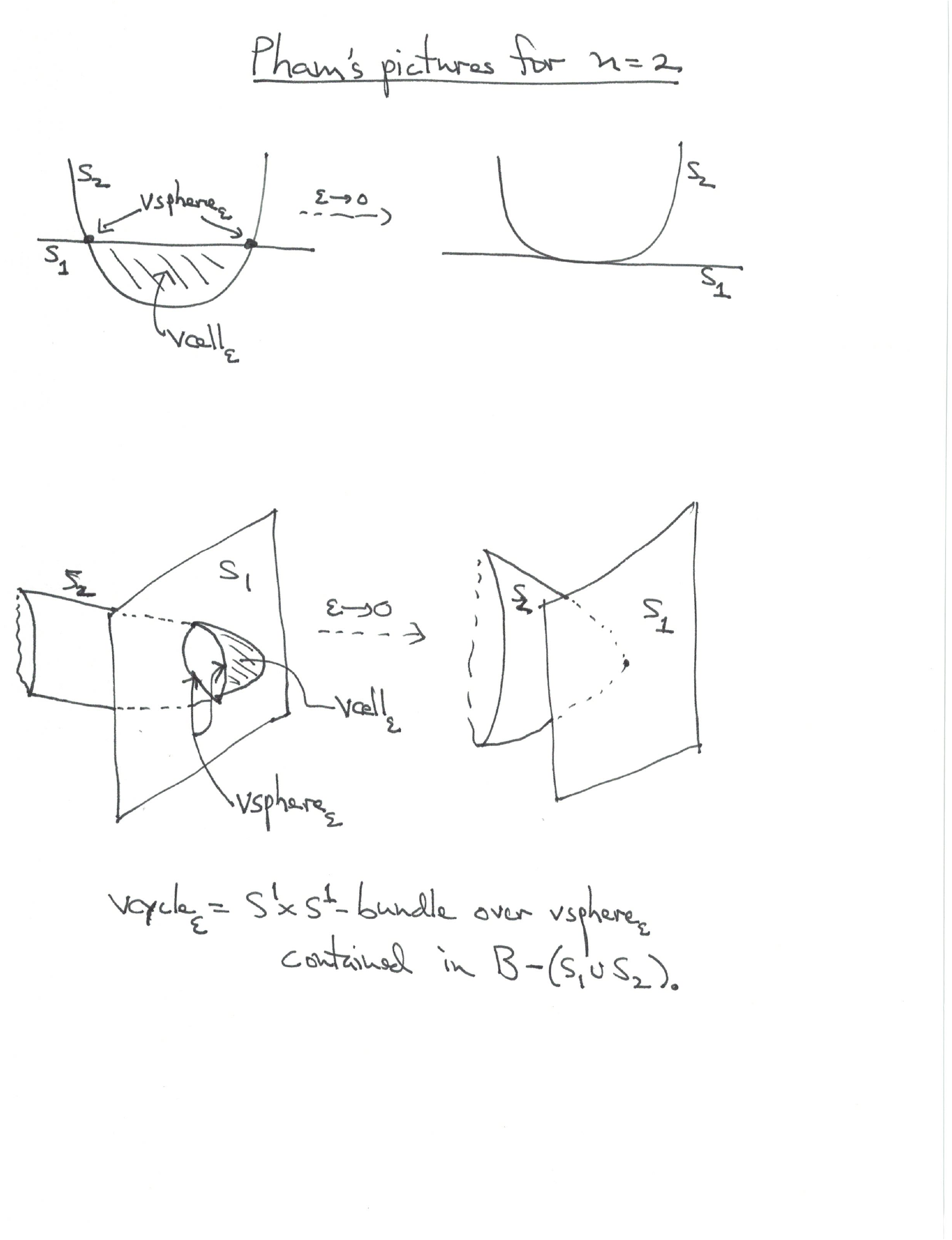}

The local structure of homology around $s_0$ is computed by Pham to be
\begin{thm}\label{homology}Let $B$ be a small ball around $s_0$ in $M$. Let $p\in P$ be near $p_0$ and assume $t_1(p)=\ve>0$. Write $d=\dim_\C M_p$.
\nn
(i) We have for reduced homology 
$$\widetilde H_j\Big(\bigcap_1^nS_{i,p}\cap B,\Z\Big)=(0),\ j\neq d-n:= \dim_\C \Big(\bigcap_1^nS_{i,p}\Big).
$$
$$\widetilde{H}_{d-n}\Big(\bigcap_1^nS_{i,p}\cap B,\Z\Big)=\Z\cdot \rm{vsphere}_\ve.
$$
\noindent (ii) In relative homology
$$H_j(B_p,S_p\cap B) = (0), \ j\neq d;\quad H_{d}(B_p,S_p\cap B) =\Z\cdot \rm{vcell}_\ve.
$$
\noindent (iii) For the homology of the complement
$$H_j(B_p-S_p\cap B) = (0),\ j\neq d;\quad H_{d}(B_p-S_p\cap B) =\Z\cdot \rm{vcycle}_\ve.
$$
\end{thm}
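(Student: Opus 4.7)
The three statements fit together geometrically: (i) computes the Milnor fibre of the pinched top stratum, (ii) transfers this to the reduced homology of the union of divisors via the contractibility of the ambient ball $B_p$, and (iii) is dual to (ii). Throughout, Pham's nested choice of radii $\ve\gg\ve_{n-1}\gg\cdots\gg\ve_1>0$ will be what makes the geometric chains $\rm{vsphere}_\ve$, $\rm{vcell}_\ve$, $\rm{vcycle}_\ve$ well-defined representatives of the classes the theorem asserts.

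For (i), in the chosen local coordinates,
\[
\bigcap_{i=1}^n S_{i,p}\cap B = \{x_1=\cdots=x_{n-1}=0,\ x_n^2+\cdots+x_\ell^2=\ve\}\cap B,
\]
a neighborhood of the origin in the smooth affine complex quadric of complex dimension $d-n$. This is the classical Milnor fibre of the $A_1$-singularity $x_n^2+\cdots+x_\ell^2$, diffeomorphic to the disk cotangent bundle $DT^*S^{d-n}$ and deformation-retracting onto its zero-section, the real sphere $\rm{vsphere}_\ve$; reduced homology is therefore free of rank one in degree $d-n$ with the stated generator.

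For (ii), contractibility of the ball $B_p$ gives $H_j(B_p,S_p\cap B)\cong\tilde H_{j-1}(S_p\cap B)$, which I would compute via the Mayer--Vietoris spectral sequence
\[
E^1_{r,q} = \bigoplus_{|I|=r+1}\tilde H_q\Bigl(\bigcap_{i\in I}S_{i,p}\cap B\Bigr)\Longrightarrow\tilde H_{r+q}(S_p\cap B).
\]
The key input is that every non-maximal intersection is smoothly contractible in $B$: if $n\notin I$, the intersection is a linear coordinate subspace, while if $n\in I$ but $I\ne\{1,\dots,n\}$ the equation of $S_n$ can be solved as a smooth graph in any free linear coordinate $x_j$ with $j<n$, $j\notin I$. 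Hence only the top-stratum term $I=\{1,\dots,n\}$ contributes, and by (i) it gives $\Z$ in bidegree $(n-1,d-n)$. The spectral sequence degenerates to $\tilde H_{d-1}(S_p\cap B)=\Z$ and zero otherwise, which is (ii); the generator $\rm{vcell}_\ve$ is the real $d$-cell in $B_p$ whose boundary lies on $S_p\cap B$ after a small homotopic adjustment of its quadric face onto $S_n$.

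For (iii), the plan is either to deduce the statement from (ii) by Lefschetz--Alexander duality for the pair $(B_p,S_p\cap B)$ in the ambient real $2d$-manifold $B_p$, or, more geometrically, to peel off the divisors $S_i$ one at a time using the Thom--Gysin long exact sequence. In the latter approach, each $S_i$ is a smooth hypersurface of real codimension two with trivial normal bundle in the chosen coordinates, and the connecting map of the Gysin sequence implements exactly the tubing operation $\tau_{i,\ve_i}^{\ast}$ of the theorem statement; starting from $\rm{vsphere}_\ve$ in the top stratum and iterating produces a class of real dimension $(d-n)+n=d$, namely $\rm{vcycle}_\ve$. The main obstacle will be verifying that the iterated Gysin argument survives the presence of the corners of the normal-crossings divisor, and that the resulting class lives in the global complement $B_p-S_p\cap B$ rather than in some intermediate open set; this is precisely the role of the nested radius hierarchy $\ve\gg\ve_{n-1}\gg\cdots\gg\ve_1>0$, which keeps each successive tubed cycle disjoint from the divisors still to be removed, and it is also where the identification of the iterated-tube generator with the duality-theoretic generator of $H_d$ is cleanest.
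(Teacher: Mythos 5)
The paper does not actually prove this theorem; the proof environment reads only ``See \cite{Ph}'', so there is no argument in the paper to compare against. What you have written is a reconstruction from scratch, and on the whole it is a plausible outline of the argument Pham gives, with the hard points correctly identified.

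Parts (i) and (ii) are sound. For (i), the identification of $\bigcap_1^n S_{i,p}\cap B$ with the local Milnor fibre of $x_n^2+\cdots+x_\ell^2$ and the retraction onto the real sphere is exactly right. For (ii), the reduction to $\tilde H_{j-1}(S_p\cap B)$ via contractibility of $B_p$ and the Mayer--Vietoris spectral sequence works; your observation that every proper intersection $\bigcap_{i\in I}S_{i,p}\cap B$ with $|I|<n$ is a graph over free coordinates, hence contractible, is the right lemma, and it kills everything on $E^1$ except the nerve (an $(n-1)$-simplex, contributing only in degree $0$) and the single class in bidegree $(n-1,d-n)$. You should flag the degenerate regime $d=n$ (where $d-n=0$ and the reduced top-stratum class sits in the same bidegree as nerve contributions), but that is a boundary case. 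One more point worth making explicit: the boundary of $\rm{vcell}_\ve$ does not literally lie on $S_p\cap B$ — its ``quadric face'' $\{\sum_{i\ge n}x_i^2=\ve\}$ is on $S_n$ only where $x_1=\cdots=x_{n-1}=0$; you acknowledge this with ``small homotopic adjustment'', but it deserves a sentence, since a reader comparing against the definition will notice.

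Part (iii) is where the genuine work lies, and I would push harder on the gap you already sense. The duality route has a subtlety you should not sweep under the rug: the natural local Lefschetz/Alexander pairing is
$$H_j(B_p-S_p\cap B)\otimes H_{2d-j}\bigl(B_p,\,(S_p\cap B)\cup\partial B_p\bigr)\longrightarrow \Z,$$
and the second factor is \emph{not} the group computed in (ii). One must compare $H_*(B_p,S_p\cap B)$ with $H_*(B_p,(S_p\cap B)\cup\partial B_p)$ by the triple $(B_p,(S_p\cap B)\cup\partial B_p,S_p\cap B)$ and excision along a collar of $\partial B_p$; this works but it is an extra step, not a direct corollary of (ii). The iterated Thom--Gysin route is closer in spirit to Pham's original argument and to the paper's definition of $\rm{vcycle}_\ve$ as an iterated tube: the Gysin connecting map for a smooth divisor \emph{is} the Leray coboundary, which is exactly $\tau_{i,\ve_i}^*$. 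The point you correctly identify as the main obstacle — that the normal-crossings corners make the naive iteration ill-defined — is in fact the core of Pham's contribution here; the hierarchy $\ve\gg\ve_{n-1}\gg\cdots\gg\ve_1>0$ is precisely what lets one peel divisors off one at a time, at each stage staying inside an open set where the remaining configuration is still a normal crossings divisor with a pinch of one lower depth, so the Gysin argument can be applied inductively. Making that induction precise (defining the intermediate open sets and verifying the inductive hypothesis) is the missing content, and it is not cosmetic. As it stands your proposal is a correct roadmap rather than a proof of (iii).
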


Local Poincar\'e duality yields a pairing
\eq{}{\langle\cdot,\cdot\rangle: H_d(B_p-S_p\cap B)\otimes H_{d}(B_p,(S_p\cap B)\cup \partial B_p) \to H_{2d}(B_p,\partial B_p) \cong \Z.
}
Let $\gamma$ be a simple closed path on $P$ based at $p$, supported in a neighborhood of $p_0$, and looping once around the divisor $t_1=0$. Using Thom's isotopy lemma \cite{M} and assuming that $f$ is a submersion on strata except at the point $s_0$, Pham shows that the variation of monodromy
\eq{}{var:= \gamma_*-Id:H_*(M_p-S_p)\to H_*(M_p-S_p)
}
factors through excision and a local variation map $var_{loc}$
\ml{}{ H_d(M_p-S_p) \xrightarrow{excision} H_d(B_p-S_p\cap B_p,\partial B_p) \xrightarrow{var_{loc}}\\
 H_d(B_p-S_p) \xrightarrow{i_*} H_d(M_p-S_p).
}
(The variation map is zero in homological degrees $\neq d$.) 

The Picard-Lefschetz theorem in this setup is
\begin{thm}\label{pl_thm} We have
\ml{}{var_{loc}(\rm{excision}(c)) = \rm{excision}(c) \\
+(-1)^{(n+1)(n+2)/2}\langle \rm{excision}(c),\rm{vcell}_\ve\rangle \rm{vcycle}_\ve
}
\end{thm}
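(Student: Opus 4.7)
The plan is to reduce the statement, by induction on the number $n$ of divisors, to the classical Picard--Lefschetz formula recalled at the start of section \ref{pham}. Throughout, I would work in the explicit local model with coordinates $(t_1,\dotsc,t_k,x_1,\dotsc,x_\ell)$ and divisors $S_i:x_i=0$ for $i<n$ and $S_n:t_1=x_1+\cdots+x_{n-1}+x_n^2+\cdots+x_\ell^2$.

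First I would invoke Theorem \ref{homology}: both of the groups in which the formula lives, namely $H_d(B_p,(S_p\cap B)\cup \partial B_p)$ and $H_d(B_p-S_p\cap B)$, are of rank one, generated by $\rm{vcell}_\ve$ and $\rm{vcycle}_\ve$ respectively. It therefore suffices to verify the formula on the single generator $c=\rm{vcell}_\ve$, so the content of the theorem reduces to the determination of one integer, namely the scalar by which $var_{loc}$ sends the excision class of $\rm{vcell}_\ve$ to a multiple of $\rm{vcycle}_\ve$, modulo the identity piece $\rm{excision}(c)$.

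Next I would peel off the divisors $S_1,\dotsc,S_{n-1}$ one at a time via the iterated tube construction already used to define $\rm{vcycle}_\ve$. Concretely, $\rm{vcycle}_\ve$ is obtained by $n-1$ successive Leray tube operations $\tau_{j,\ve_j}^*$ applied to $\rm{vsphere}_\ve$, which itself lives on the innermost stratum $\bigcap_{i=1}^n S_i$. Both the local variation map and the local Poincar\'e duality pairing are compatible with these tube constructions: pairing a tubed class with a cell transverse to the same divisor reduces, by a Thom--Gysin argument, to the corresponding pairing on the smaller stratum; and since none of the defining equations for $S_1,\dotsc,S_{n-1}$ involve $t_1$, the monodromy around $t_1=0$ acts trivially in those directions and only ``sees'' the last factor. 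In this way the entire computation is transported to $\bigcap_1^n S_i$, where we have precisely the ordinary-double-point family $x_n^2+\cdots+x_\ell^2=t_1$. Applying the classical Picard--Lefschetz theorem there produces the variation in terms of the self-intersection of $\rm{vsphere}_\ve$, with the standard sign $(-1)^{(d-n)(d-n+1)/2}$. Finally, Thom's isotopy lemma \cite{M} together with the hypothesis that $f$ is a stratified submersion away from $s_0$ guarantees that the monodromy is entirely localized at the pinch point, so the computation on $B_p$ transfers to the full variation on $M_p-S_p$.

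The main obstacle, and the only substantial calculation, is the orientation and sign bookkeeping through the iterated tube construction. One has to fix compatible orientations on each partial intersection $\bigcap_{i\le j} S_i$, on each circle bundle used to form $\tau_{j,\ve_j}^*$, and on $\rm{vcell}_\ve$ as a product of intervals $[0,\ve_j]$ in the $x_j$-directions with the real disk cut out by $\rm{vsphere}_\ve$ in the last stratum. Each of the $n-1$ tube steps contributes a controlled sign shift to the classical Picard--Lefschetz sign $(-1)^{(d-n)(d-n+1)/2}$. Collecting these shifts and combining with the base sign yields, after a routine but somewhat delicate exponent computation, the claimed global sign $(-1)^{(n+1)(n+2)/2}$. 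Once this sign is verified, the theorem follows.
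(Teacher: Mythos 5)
The paper's own ``proof'' is simply a citation to Pham \cite{Ph}, so there is no in-text argument to compare against; what you have written is a sketch of the strategy Pham actually uses (Leray tubes, rank-one local homology, reduction to the ordinary double point case). At that level your outline is the right one. But there are a few real problems.

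First, a conceptual slip in the reduction step. You say it suffices to verify the formula for ``the single generator $c=\mathrm{vcell}_\ve$''. But $c$ is a class in $H_d(M_p-S_p)$, while $\mathrm{vcell}_\ve$ lives in $H_d(B_p, S_p\cap B)$ — a completely different group, sitting on the other side of the intersection pairing. What the rank-one statements in Theorem \ref{homology} actually buy you is that the \emph{image} of excision and the \emph{target} $H_d(B_p-S_p\cap B)$ are both $\Z$, so $var_{loc}$ is multiplication by one integer; the content of the theorem is that this integer equals $\pm\langle \mathrm{gen}, \mathrm{vcell}_\ve\rangle=\pm1$, with the specific sign $(-1)^{(n+1)(n+2)/2}$. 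You should phrase the reduction as a statement about the generator of the excised group, not about $\mathrm{vcell}_\ve$.

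Second — and this is the genuine gap — the entire content of the theorem, once the rank-one bookkeeping is in place, is the sign, and you do not compute it. You write ``after a routine but somewhat delicate exponent computation, the claimed global sign,'' but that is exactly the theorem. Worse, the intermediate sign you anchor the computation to appears to be off: with the paper's convention for classical Picard--Lefschetz (the local model $x_1^2+\cdots+x_n^2=t$ with $n$ variables gives sign $(-1)^{n(n+1)/2}$), the innermost stratum here has $d-n+1$ variables, so the classical base sign is $(-1)^{(d-n+1)(d-n+2)/2}$, not $(-1)^{(d-n)(d-n+1)/2}$. Note also that your base sign depends on $d$ while the claimed final sign $(-1)^{(n+1)(n+2)/2}$ does not; you give no indication of why the $d$-dependence should cancel against the contributions of the $n-1$ tube operations. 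That cancellation is precisely the nontrivial orientation computation that must be carried out to turn this outline into a proof.

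Third, a minor imprecision: ``since none of the defining equations for $S_1,\dotsc,S_{n-1}$ involve $t_1$, the monodromy around $t_1=0$ acts trivially in those directions'' is not really correct as stated. The monodromy acts on all of $H_*(M_p-S_p)$; what you want to invoke is that (after Thom isotopy) the monodromy preserves tubular neighborhoods of $S_1,\dotsc,S_{n-1}$ and hence commutes with the Leray tube/coboundary operations, so the variation can be transported along the tubes. That is the correct compatibility statement, and it should be made explicit since it is what justifies the reduction to the innermost stratum.
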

\begin{proof} See \cite{Ph}. 
\end{proof}

\begin{ex}In this example we compute the Picard-Lefschetz transformation arising in physics. One has a set of quadrics $Q_i:f_i=0,\ 1\le i\le n$ indexed by the edges of a graph. Our normal crossings divisor in this case is $\bigcup Q_i$. We consider the situation locally around a pinch point $s_0$, and we write in Pham's coordinates, $Q_i: x_i=0,\ 1\le i\le n-1$ and $Q_n: t_1-(x_1+\cdots +x_{n-1} + \sum_n^\ell x_i^2)=0$. Notice that in these coordinates, the bad fibre over $p_0$ (where $t_1=0$) of $\bigcap_1^n Q_{i,p_0}$ has an isolated $\R$-point at the origin. We will see (proposition \ref{hessianprop}) in the case of {\it physical singularities} that this is also the case for the space-time coordinates. Hence it is plausible to assume that at least for physical singularities Pham's local coordinates are defined over $\R$. We are interested in computing the monodromy on $M-\bigcup Q_i$ of the chain given by taking space-time coordinates in $\R$. As it stands this makes no sense, because our quadrics are defined using the Minkowski metric so the $Q_i$ meet the real locus. The standard ploy to avoid this problem is to replace the defining equations $f_j=0$ by $f_j+ia_j=0$ with $1>>a_j>0$.  Using theorem \ref{pl_thm}, we need to compute 
$$\langle \rm{excision}(\R^{Dg}),\rm{vcell_\ve}\rangle,$$ 
where $D$ is the dimension of space-time and $g$ is the number of loops of the graph. 

In Pham's local coordinates, the deformed quadrics are defined by 
$$x_j=-ia_j,\ 1\le j\le n-1\text{ and }\ve + ia_n - (\sum_1^{n-1}x_j+\sum_n^{Dg} x_j^2)=0.$$ 
We let $x_j$ run over the path 
$$x_j=(1-\rho_j)ia_j + \rho_j(\ve+ia_n)/(n-1),\ 1\le j\le n-1.$$ 
Here $0\le \rho_j\le 1$. For points $x^0=(x_1^0,\dotsc,x^0_{n-1})$ on those paths, the quadratic equation becomes 
$$\sum_n^{Dg} x_j^2 = \ve + ia_n - \sum_1^{n-1}x_j^0=:r(x^0)e^{i\theta(x^0)}.$$
The locus
\eq{17}{\{(e^{i\theta(x^0)/2}u_n,\dotsc e^{i\theta(x^0)/2}u_{Dg})\ |\ \sum_{k=n}^{Dg}u^2_k \le r(x^0)\}
} 
is a solid sphere, and the union of those solid spheres as the $x_j$ run over those paths is the chain $\text{vcell}_\ve$ for the deformed quadrics. 

According to theorem \ref{pl_thm}, the multiplicity of the vanishing cycle in the variation of the cycle $\R^{Dg}$ (or more precisely of $\P^{Dg}(\R)$) around the given pinch point is the intersection of this real locus with $\text{vcell}_\ve$.

Note that $x_j$ is real only at the point $\rho_j=a_j/(a_j+a_n/(n-1))\in (0,1)$. At the point with these coordinates, the quadratic equation reads $\sum_n^{Dg} x_j^2 = R+ia_n$ for some $R\in \R$. since $a_n\neq 0$, the only real point on the corresponding solid sphere \eqref{17} lies at the origin. Thus, $\text{vcell}_\ve\cap \R^{Dg}$ is precisely one point. The intersection looks locally like the intersection $\R^N\cap e^{i\theta}\R^N \subset \C^N$, so it is transverse, and $$\langle\rm{excision}(\P^{Dg}(\R)),\rm{vcycle}\rangle = \pm 1.
$$ 
We do not try to compute the sign. 

This example is important because it proves that there really is non-trivial monodromy associated to the physical situation. Note the computation is very dependent on taking the $a_j>0$. 
\end{ex}
\section{Cutkosky's Theorem} \label{cutthm}
We want to prove \eqref{10.1} in the case of {\it physical singularities}. (Recall, we are also assuming for simplicity that we cut all edges so $r=n$ in \eqref{10.1}.) To understand physical singularities, consider formula \eqref{5.20} for the amplitude $A_G$. Viewed as a function of external momenta, $A_G$ will be non-singular unless the chain of integration $\R^{Dg}\times \sigma$ meets the polar locus $\sum_{e\in E} A_e=0$. When it does meet, it may still happen that the chain can be deformed away from the polar locus so $A_G$ has no singularity. 
The logical place to look for branchpoints of $A_G$ is thus at external momenta where $\sigma\times \R^{Dg}$ contains singular points of the universal quadric
\eq{}{Q: \sum_{e\in E} A_ef_e=0.
}
\begin{defn}\label{physsing} A point $p= (c_1,\dotsc,c_n,p') \in Q(\R)\subset \R^n\times\R^{Dg}$ is a physical singularity if $p$ is a singular point of $Q$ and all $c_i>0$. By extension, a point $p'\in \R^{Dg}$ is a physical singularity if there exists a physical singularity $(c,p')\in Q$. 
\end{defn}
\begin{lem}Let $p=(c_1,\dotsc,c_n,p')$ be a physical singularity of the universal quadric $Q$. Then $p'\in \bigcap_1^n \{f_i=0\}$ and $p'$ is a singular point in that intersection of quadrics. 
\end{lem}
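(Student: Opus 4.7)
The strategy is to compute the partial derivatives of the defining equation $F(A,x):=\sum_{e\in E}A_e f_e(x)$ of the universal quadric $Q$ at the putative physical singularity and read off both conclusions directly. Singularity of $Q$ at $p=(c_1,\dotsc,c_n,p')$ means every first partial of $F$ vanishes at $p$, so the derivatives with respect to the $A_e$ and with respect to the space-time coordinates $x_j$ on $\R^{Dg}$ must all be zero.

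First I would compute $\partial F/\partial A_e = f_e(x)$, so $\partial F/\partial A_e\big|_{p}=f_e(p')=0$ for every edge $e$. This immediately gives the first assertion: $p'\in\bigcap_{e\in E}\{f_e=0\}$. Next I would compute $\partial F/\partial x_j = \sum_e A_e\,\partial f_e/\partial x_j$, whose vanishing at $p$ reads
\eq{}{\sum_{e\in E} c_e\, df_e(p') = 0}
as an identity of covectors on $\R^{Dg}$.

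Here is where the positivity hypothesis enters and does all of the work: because every $c_e>0$, the displayed relation is a \emph{nontrivial} linear dependence among the differentials $\{df_e(p')\}_{e\in E}$. Consequently the Jacobian of the map $(f_1,\dotsc,f_n)\colon \R^{Dg}\to\R^n$ at $p'$ has rank strictly less than $n$, which is exactly the statement that $p'$ is a singular point of the scheme-theoretic intersection $\bigcap_{e\in E}\{f_e=0\}$. This gives the second assertion and completes the proof.

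There is essentially no obstacle: the argument is a one-line chain-rule computation, and the only content beyond differentiation is the observation that a convex combination with strictly positive coefficients cannot produce the trivial relation. I would note in passing that if one dropped the physicality condition and merely required $p$ to be a singular point of $Q$ with some $c_e$ allowed to vanish, the argument would still show $f_e(p')=0$ for all $e$, but one could no longer conclude a nontrivial dependence among the $df_e(p')$; this is the precise place where the positivity of the Schwinger parameters is used.
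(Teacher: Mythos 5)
Your proof is correct and follows the same argument as the paper: differentiate $F=\sum A_e f_e$ with respect to the $A_e$ to get $f_e(p')=0$, then with respect to the space-time coordinates to get the relation $\sum_e c_e\,df_e(p')=0$, which forces the Jacobian of $(f_1,\dotsc,f_n)$ to drop rank at $p'$. The one thing you add that the paper leaves implicit is the explicit observation that the positivity (really, just nonvanishing) of the $c_e$ is what makes the gradient relation nontrivial.
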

\begin{proof}Let $f=\sum A_if_i$. By assumption, $p$ is singular so $f_i(p')=\frac{\partial}{\partial A_i}f(p) = 0$. Also, vanishing of the partials of $f$ with repsect to coordinates on $\R^{Dg}$ yields the relation
\eq{gradrel}{\sum_1^n c_i \text{grad}_{p'}(f_i)=0,
}
which implies that $p'$ is singular on the intersection of quadrics. 
\end{proof}
\begin{defn}A physical singularity $p$ will be called non-degenerate if two conditions hold:\nn
(i) The relation \eqref{gradrel} is the only non-trivial relation among the gradients at $p'$. \nn
(ii) The Hessian matrix at the singular point for the intersection of the quadrics is non-degenerate. 
\end{defn}

Condition (i) in the above definition means we can choose a partial set of local coordinates $x_1,\dotsc,x_{n-1}$ at the singular point $p'$ in such a way that the intersection of the quadrics is cut out locally near $p'$ by $x_1=\cdots = x_{n-1}=g=0$ for some function $g$. Condition (ii) says that the matrix of second order partials of $g|\{x_1=\cdots = x_{n-1}=0\}$ is non-degenerate at $p'$.  

In order to understand the $\R$-structure we borrow a standard picture for a Morse function $f$, \cite{Mil}.
\begin{figure}[h]
\includegraphics[scale=.5,clip,trim=0mm 80mm 0mm 0mm]{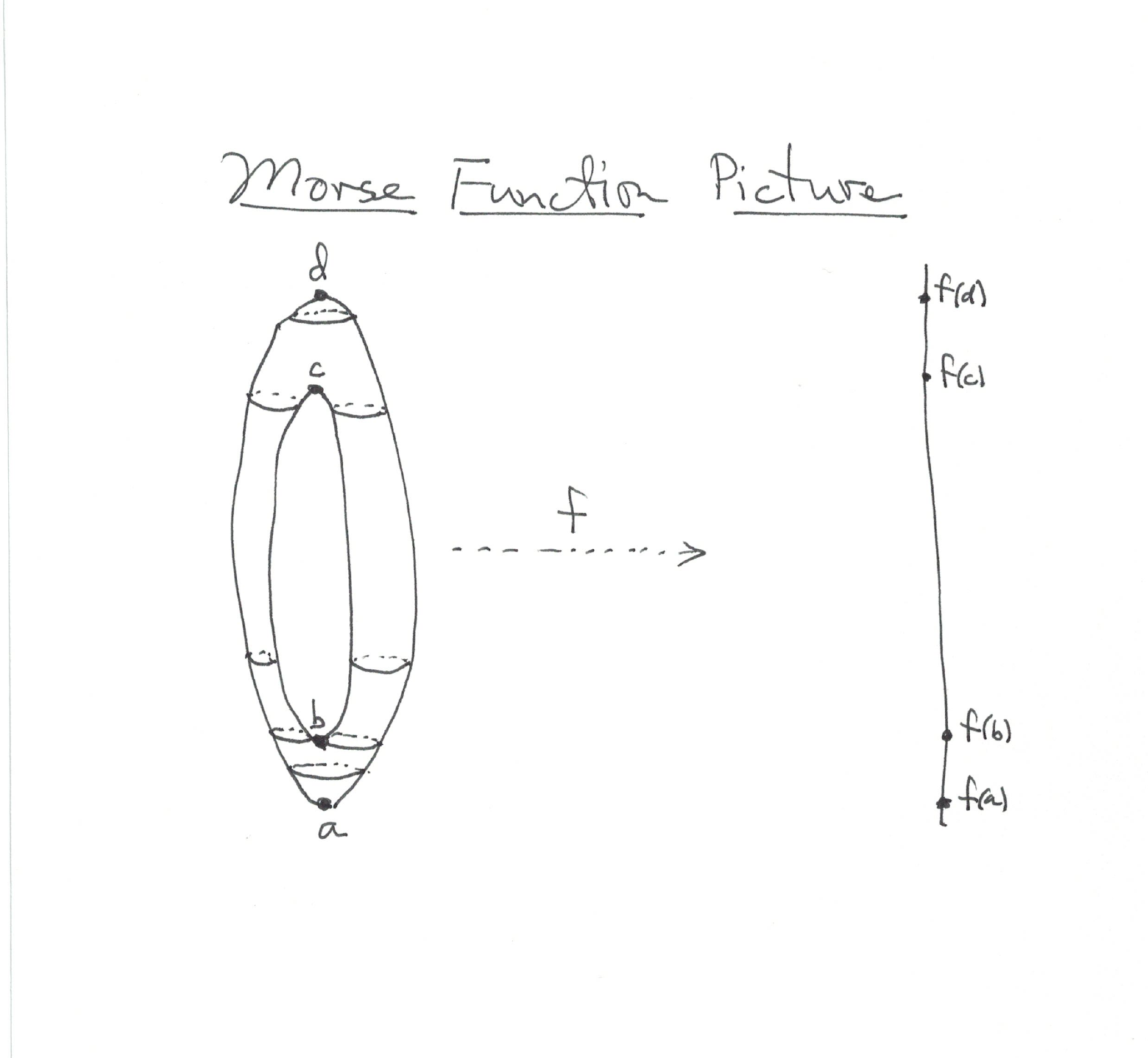}
\centering
\end{figure}

The Hessian matrix of $f$ at the points $a, d$ is definite, while the Hessian matrix at $b,c$ is indefinite. Note $f^{-1}f(a)=a$ and the fibre $f^{-1}(x)$ for $x$ slightly above $f(a)$ is a sphere (a circle in the picture). This is the vanishing cycle which contracts to the point $a$ as $x\to f(a)$. The picture at $d$ is similar for $y$ slightly below $f(d)$. On the other hand, $f^{-1}f(b)$ and $f^{-1}f(c)$ are figure-eights, and there are no vanishing cycles.  In our case, if we know the Hessian for the function $g$ at $p'$ is either positive or negative definite, then Pham's vanishing cycle will coincide with the real sphere given by Morse theory. 

\begin{prop}\label{hessianprop} Let $p=(c_1,\dotsc,c_n,p')$ be a non-degenerate physical singularity. Assume $m_i^2>0$ for all masses. Then the Hessian associated to $p'\in \bigcap_1^n \{f_i=0\}$ is negative definite. In particular, Pham's vanishing sphere is real, and Cutkosky's theorem \eqref{10.1} holds for the variation of $A_G$.   
\end{prop}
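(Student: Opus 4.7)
The plan is to establish the sign of the Hessian by a Lagrange-multiplier reduction, and then to conclude reality of Pham's vanishing sphere from the resulting Morse-theoretic normal form. Write $f_i=\langle q_i,q_i\rangle_Q-m_i^2$ where $q_i(x)=e_i^\vee(x)-c_{e_i}$ is an $\R^D$-valued affine map with constant linear part $A_i$, and $Q$ is the Minkowski metric on $\R^D$. Then $\nabla f_i=2A_i^{\,t}Qq_i$, and the Hessian matrix $\nabla^2 f_i=2A_i^{\,t}QA_i$ is position-independent.

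By the non-degeneracy hypothesis (i), the gradients $\nabla f_1,\dotsc,\nabla f_{n-1}$ are linearly independent at $p'$, so $V:=\bigcap_{i<n}\{f_i=0\}$ is smooth there and $g=f_n|_V$ is the function whose Hessian must be analyzed. The relation $\sum_i c_i\nabla f_i|_{p'}=0$ supplies Lagrange multipliers $\lambda_i=-c_i/c_n$, and the corresponding Lagrangian $L=f_n-\sum_{i<n}\lambda_i f_i$ equals $F/c_n$ with $F:=\sum_i c_if_i$. The standard second-order Lagrange test then yields, for $v$ in the tangent space $T:=T_{p'}V$,
$$
(\nabla^2 g|_T)(v,v)\;=\;\frac{1}{c_n}\,(\nabla^2 F)(v,v)\;=\;\frac{2}{c_n}\sum_{i=1}^{n}c_i\,\langle A_iv,A_iv\rangle_Q.
$$

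The crux of the matter is that every $A_iv$ is $Q$-orthogonal to $q_i$ when $v\in T$. For $i<n$ this is the defining equation of $T$; for $i=n$ pair $v$ against the relation $\sum_i c_iA_i^{\,t}Qq_i=0$ and use the case $i<n$. Because $\langle q_i,q_i\rangle_Q=m_i^2>0$, each $q_i$ is time-like, so its $Q$-orthogonal complement is a space-like hyperplane on which $Q$ is negative definite. Hence $\langle A_iv,A_iv\rangle_Q\le 0$, with equality forcing $A_iv=0$. Combined with $c_i>0$ this gives $(\nabla^2 g|_T)(v,v)\le 0$, and the inequality is strict unless $A_iv=0$ for every $i$; the defining embedding $H_1(G)\hookrightarrow\Q^E$ renders the joint map $(A_1,\dotsc,A_n)$ injective on $H_1(G,\R^D)$, forcing $v=0$. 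This is the negative-definiteness claim. Note that it is the \emph{positivity} of the $c_i$ (the physical part of the hypothesis) combined with the time-likeness of the on-shell momenta that conspires to give a sign; if some $c_i$ changed sign the argument would collapse.

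The remaining obstacle is translating definiteness into Cutkosky's formula. Once $\nabla^2 g|_T$ is negative definite and $p'$ is real, the real Morse lemma supplies real coordinates $y_1,\dotsc,y_m$ on $V_\R$ near $p'$ in which $g=-\sum_j y_j^2$, and together with the transversal coordinates provided by (i) this realizes Pham's pinch-point normal form of section \ref{pham} over $\R$. Consequently the vanishing sphere of Theorem \ref{homology}(i) is honestly real, so the monodromy computation performed in the example at the end of section \ref{pham} (with the $+ia_j$ Feynman deformation) applies verbatim, and theorem \ref{pl_thm} expresses the variation of $A_G$ in terms of an integration over the vanishing cycle. The delicate remaining point is orientation bookkeeping: one must match the sign of the intersection $\langle\mathrm{excision}(\P^{Dg}(\R)),\mathrm{vcell}_\ve\rangle$ with the $i\epsilon$-prescription selecting the positive-energy branch of each on-shell quadric, so that the real chains assemble into the $\delta^+(q_i^2-m_i^2)$ distributions and the $(-2\pi i)^r$ prefactor of \eqref{10.1}.
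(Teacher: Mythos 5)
Your proof of Hessian negativity is correct and, I would argue, cleaner than the paper's. The paper chooses coordinates $e_1^\vee,\dotsc,e_g^\vee$ on $H_1(G,\R^D)$, observes that on $\bigcap_{i\le g}T_{p'}\{f_i=0\}$ each $e_i^\vee(v)$ ($i\le g$) lies in the space-like hyperplane $e_i^\vee(p')^\perp$, and then tries to reduce the remaining functionals $e_j^\vee$ ($j>g$) to the first $g$ via the claim that $e_j^\vee=\sum_i\mu_j^i e_i^\vee$ implies $(e_j^\vee)^2=\sum_i(\mu_j^i)^2(e_i^\vee)^2$ (``this decomposition is orthogonal''). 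As stated that step is dubious: the cross-terms $\mu_j^i\mu_j^{i'}Q(e_i^\vee(v),e_{i'}^\vee(v))$ have no reason to vanish. Your Lagrange-multiplier formulation circumvents this entirely. By using the relation $\sum_i c_iA_i^tQq_i=0$ to extract the $Q$-orthogonality $\langle A_iv,q_i\rangle_Q=0$ for \emph{every} $i$ (including $i=n$), you get $\langle A_iv,A_iv\rangle_Q\le 0$ term-by-term without any decomposition, and then positivity of the $c_i$ and joint injectivity of $(A_1,\dotsc,A_n)$ finish it. That is the cleaner packaging of the underlying geometric fact --- on-shell momenta are time-like, hence their $Q$-orthocomplements are space-like --- that the paper is reaching for. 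The final paragraph (Morse lemma on $V_\R$, then apply Pham's Picard--Lefschetz theorem and the intersection computation with the $i\epsilon$-deformed real chain) matches what the paper does implicitly by citation back to Section \ref{pham}; both treatments leave the orientation/sign bookkeeping at the level of a remark rather than a full verification.
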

\begin{proof}Order the edges $e_1,\dotsc,e_n$ such that $e_1^\vee,\dotsc,e_g^\vee$ give coordinates on $H_1(G,\R^D)$. (Note each $e_i^\vee=(e_i^{\vee,1},\dotsc, e_i^{\vee,D})$ is a $D$-tuple of coordinate functions.) We change notation slightly and write 
$$p=(c_1,\dotsc,c_n,e_1^\vee(p'),\dotsc,e_g^\vee(p'))\in \R^n\times \R^{Dg}.
$$
The quadrics have the form $f_i = (e_i^\vee-s_i)^2-m_i^2$ for some $s_i\in \R^D$.  By a Euclidean change of variables, we can take $s_i=0$ for $1\le i\le g$. Because $p$ is singular on the universal quadric $Q$, the sum $\sum c_if_i$ is pure quadratic when expanded about $p'$, i.e. constant and linear terms vanish. Notice that the quadratic part of $f_i$ is $(e_i^\vee)^2$ which is a Minkowski square. The quadratic form associated to the Hessian is then simply $\sum c_i(e_i^\vee)^2$. Of course, these are Minkowski squares, so it is not possible to say anything about the sign of the form on $\R^{Dg}$. However, what we want is the sign of the form restricted to the intersection of the tangent spaces at $p'$ of the $f_i$. Among these are the tangent spaces at $p'$ to $f_i=(e_i^\vee)^2-m_i^2$ for $1\le i\le g$. The tangent space at $p'$ for such a $f_i$ is $(e_i^\vee)^{-1}(e_i^\vee(p')^\perp)\subset \R^{Dg}$. (Here $e_i^\vee(p')^\perp \subset \R^D$. Note $(e_i^\vee(p'))^2=m_i^2>0,\ 1\le i\le g$.) Since the Minkowski metric has sign $(1,-1,\dotsc,-1)$ on $\R^D$, we conclude that the sign of the metric on the intersection of these spaces is negative definite. In particular, $\sum_1^n c_i(e_i^\vee)^2$ is certainly negative semi-definite on the intersection of the tangent spaces. (Indeed, we can write $e_j^\vee = \sum_{i=1}^g \mu_j^i e_i^\vee$ with $\mu_j^i\in \R$. This decomposition is orthogonal, so $(e_j^\vee)^2 = \sum_{i=1}^g (\mu_j^i)^2 (e_i^\vee)^2$. Since the $(e_i^\vee)^2$ on the right are negative definite on the intersection of the tangent spaces, $(e_j^\vee)^2$ is negative definite as well.) The only way $\sum_1^n c_i(e_i^\vee)^2$ could fail to be negative definite would be if there was a non-zero tangent vector $t$ such that $e_i^\vee(t)=0,\ 1\le i\le n$. But this is not possible since the $e_i^\vee,\ 1\le i\le g$, give coordinates. 
\end{proof}

\bibliographystyle{amsplain}


\end{document}